\definecolor{Red}{rgb}{1,0,0}
\definecolor{Blue}{rgb}{0,0,1}
\definecolor{Olive}{rgb}{0.41,0.55,0.13}
\definecolor{Green}{rgb}{0,1,0}
\definecolor{MGreen}{rgb}{0,0.8,0}
\definecolor{DGreen}{rgb}{0,0.55,0}
\definecolor{Yellow}{rgb}{1,1,0}
\definecolor{Cyan}{rgb}{0,1,1}
\definecolor{Magenta}{rgb}{1,0,1}
\definecolor{Orange}{rgb}{1,.5,0}
\definecolor{Violet}{rgb}{.5,0,.5}
\definecolor{Purple}{rgb}{.75,0,.25}
\definecolor{Brown}{rgb}{.75,.5,.25}
\definecolor{Grey}{rgb}{.5,.5,.5}
\definecolor{Black}{rgb}{0,0,0}
\newcommand{\lscr}{\mathscr{L}}
\newcommand{\ccal}{\mathcal{C}}
\newcommand{\dcal}{\mathcal{D}}
\newcommand{\ecal}{\mathcal{E}}
\newcommand{\fcal}{\mathcal{F}}
\newcommand{\gcal}{\mathcal{G}}
\newcommand{\hcal}{\mathcal{H}}
\newcommand{\ical}{\mathcal{I}}
\newcommand{\jcal}{\mathcal{J}}
\newcommand{\mcal}{\mathcal{M}}
\newcommand{\ocal}{\mathcal{O}}
\newcommand{\rcal}{\mathcal{R}}
\newcommand{\scal}{\mathcal{S}}
\newcommand{\tcal}{\mathcal{T}}
\newcommand{\vcal}{\mathcal{V}}
\newcommand{\wcal}{\mathcal{W}}
\newcommand{\zcal}{\mathcal{Z}}
\newcommand{\real}{\mathbb{R}}
\newcommand{\nintgr}{\mathbb{N}}
\newcommand{\eps}{\varepsilon}
\newcommand{\ind}{\mathbf{1}}
\newcommand{\prob}{\mathbb{P}}
\newcommand{\expec}{\mathbb{E}}
\newcommand{\E}{\mathbb{E}}
\renewcommand{\P}{\mathbb{P}}
\newcommand{\poly}{\mathrm{poly}}
\newtheorem{theorem}{Theorem}
\newtheorem{proposition}{Proposition}
\newtheorem{definition}{Definition}[section]
\newtheorem{lemma}{Lemma} 
\newtheorem{claim}{Claim}[section]
\newcommand{\weight}{w}
\newcommand{\dist}{\mathrm{d}}
\newcommand{\gdist}{\dist^{\mathrm{g}}}
\newcommand{\path}{\mathrm{P}}
\newcommand{\phy}{\mathbb{Y}}
\newcommand{\hmgphy}{\mathbb{HY}}
\newcommand{\tv}{\Delta_{\mathrm{TV}}}
\newcommand{\swap}{\Delta_{\mathrm{SW}}}
\newcommand{\blowup}{\Delta_{\mathrm{BL}}}
\newcommand{\kl}{\mathrm{KL}}
\newcommand{\srevision}[1]{{#1}}
\newcommand{\rt}{\rho}
\newcommand{\hmgt}[1]{T^{(#1)}}
\newcommand{\hmgv}[1]{V^{(#1)}}
\newcommand{\hmgl}[1]{L^{(#1)}}
\newcommand{\hmgll}[2]{L^{(#1)}_{#2}}
\newcommand{\hmge}[1]{E^{(#1)}}
\newcommand{\hmgrt}[1]{\rt^{(#1)}}
\newcommand{\trees}{\mathbb{T}}
\newcommand{\s}{\sigma}
\newcommand{\hs}{\hat\s}
\newcommand{\evr}{\nu}
\renewcommand{\b}{\text{\tt B}}
\newcommand{\g}{\text{\tt G}}
\renewcommand{\r}{\text{\tt R}}
\newcommand{\y}{\text{\tt Y}}
\newcommand{\quantum}{\frac{1}{\Upsilon}}
\newcommand{\invquantum}{\Upsilon}
\renewcommand{\L}{\lscr}
\newcommand{\hattml}{\hat{T}^{\mathrm{ML}}_k}
\newcommand{\psiml}{\Psi^{\mathrm{ML}}}
\DeclareMathOperator*{\argmin}{arg\,min}
\newcommand{\hdcal}{\widehat{\dcal}}
\newcommand{\tdcal}{\widetilde{\dcal}}
\begin{document}

\title{\vspace{0cm}
Phase transition in the sample complexity of
likelihood-based phylogeny inference
\footnote{
Keywords: phylogenetic reconstruction,
maximum likelihood,
sequence-length requirement.
}
}

\author{
Sebastien Roch\footnote{Department of Mathematics,
UW--Madison.
Work partly done at Microsoft Research, UCLA, IPAM and
the Simons Institute for the Theory of Computing.
Work supported by NSF grants DMS-1007144 and DMS-1149312 (CAREER), and an Alfred P. Sloan Research Fellowship.}
\and
Allan Sly\footnote{
Department of Mathematics, Princeton University. Work partly done at Microsoft Research. Work supported by NSF grants DMS-1208339 and DMS-1352013 and an Alfred P. Sloan Research Fellowship.}
}
\maketitle

\begin{abstract}
Reconstructing evolutionary trees
from molecular sequence data is a fundamental
problem in computational biology.
Stochastic models of sequence evolution
are closely related to 
spin systems that have been
extensively studied in statistical physics 
and
that connection has led to 
important insights on the
theoretical properties of 
phylogenetic reconstruction
algorithms as well as the development
of new inference methods.
Here, we study maximum likelihood,
a classical statistical technique which is
perhaps the most widely
used in phylogenetic practice
because of its superior empirical accuracy.

At the theoretical level,
except for its consistency,
that is, the guarantee of eventual
correct reconstruction
as the size of the input data grows,
much remains to be understood
about the statistical properties of 
maximum likelihood
in this context. In particular, 
the best bounds on the sample
complexity or
sequence-length
requirement of maximum likelihood,
that is, the amount of data required for
correct reconstruction,
are exponential in the number, $n$, of tips---far
from known lower bounds
based on information-theoretic arguments.
Here we close the gap by proving
a new upper bound on the
sequence-length requirement of maximum likelihood
that matches up to constants the known
lower bound for some
standard models of evolution.

More specifically, for the $r$-state symmetric
model of sequence evolution on a binary phylogeny
with bounded edge lengths,
we show that the sequence-length requirement
behaves logarithmically in $n$ when
the expected amount of mutation per edge
is below what is known as the Kesten-Stigum threshold.
In general, the sequence-length requirement 
is polynomial in $n$.
Our results imply moreover that the 
maximum likelihood estimator
can be computed efficiently on randomly
generated data provided
sequences are as above.

Our main technical contribution,
which may be of independent interest, 
relates the total
variation distance between the leaf state distributions
of two trees with a notion of combinatorial
distance between the trees. In words we show
in a precise quantitative manner that the more
different two evolutionary trees are, the easier it is to distinguish
their output.
\end{abstract}

\thispagestyle{empty}

\clearpage

\section{Introduction}\label{section:introduction}

\paragraph{Background}
Reconstructing evolutionary trees, or phylogenies,
from biomolecular data is a fundamental
problem in computational biology~\cite{SempleSteel:03,Felsenstein:04,DeTaWa:05,Steel:16,Warnow:u}.
Roughly, 
in the basic form of the problem, 
sequences from a common gene (or other
DNA region) are collected 
from representative individuals 
of contemporary species
of interest. From that sequence data
(which is usually aligned to account
for insertions and deletions), 
a phylogeny
depicting the shared history
of the species is inferred. 

From a formal statistical point of view,
one typically assumes that 
each site in the (aligned) data 
has evolved independently according to a common
Markov model of substitution along the
tree of life. The problem then boils down
to reconstructing this generating model from 
i.i.d.~samples at the leaves of the tree.
Such models are closely related to 
spin systems that have been
extensively studied in statistical physics~\cite{Liggett:85,Georgii:88} and
that connection has led to 
new insights on the
amount of data required for accurately 
reconstructing
phylogenies~\cite{Steel:01}. 
More specifically, under
broad modeling assumptions,
algorithmic upper bounds
have been obtained on the sample complexity of the
phylogenetic reconstruction problem,
together with matching information-theoretic
(i.e., applying to any method) lower bounds
 ~\cite{Mossel:03,Mossel:04a,Roch:10,DaMoRo:11a,MoRoSl:11,MiHiRa:13}.
In particular it was established 
that the best achievable sample complexity
undergoes a phase transition as the maximum 
branch length
varies. That phase transition is closely related
to the well-studied problem of 
reconstructing the root sequence
of a Markov model on a tree
given the leaf sequences~\cite{Mossel:04},
a tool which plays a key role in the above results.

The algorithmic results 
in~\cite{Roch:10,DaMoRo:11a,brown2012fast,MiHiRa:13}
concern ad hoc methods of inference.
On the other hand, little is known
about the precise
sample complexity of reconstruction methods
used by evolutionary biologists in practice
(with some exceptions~\cite{LaceyChang:06}).
Here we consider maximum likelihood (ML), introduced in phylogenetics in~\cite{Felsenstein:81},
where one computes (or approximates) the tree most likely 
to have produced the data among a class of allowed models.
Likelihood-based methods are perhaps
the most widely used and most
trusted methods
in current phylogenetic practice~\cite{Stamatakis:06}. 
 In previous theoretical work,
upper bounds were derived on the sample
complexity of ML that were far from the
lower bound~\cite{SteelSzekely:02}---in some
regimes, doubly exponentially far in the
number of species.
\emph{Here we close the gap by proving
	a new upper bound on the
	sample complexity of maximum likelihood
	that matches up to constants the known
	information-theoretic lower bound for some
	standard models of evolution.}

\paragraph{Overview of main results and techniques}
In order to state our main results more precisely,
we briefly describe the model of evolution
considered here. (See Section~\ref{section:preliminaries}
for more details.) The {\em unknown} phylogeny
$T$ is a
weighted binary tree with $n$ leaves labeled
by species names, one leaf for each species
of interest. 
Without loss of generality, we assume that
the leaf labels are $[n] = \{1,\ldots,n\}$.
The weights on the edges (or branches), $\{w_e\}_{e \in E}$ where $E$ is the set of edges of $T$,
are assumed to be discretized and bounded
between two constants $f < g$. The quantity
$w_e$ can be interpreted
as the expected number of mutations per site
along edge $e$. We denote by $\phy^{(n)}_{f,g}[\quantum]$ the set of all such phylogenies, where $\quantum$ is the 
discretization. 

Let $\rho$ be the root of the tree,
that is, the most recent common ancestor to the
species at the leaves (which formally can be chosen arbitrarily
as it turns out not to affect the distribution
of the data). Let $\rcal$ be a state space
of size $r$. A typical choice is $\rcal = \{\mathrm{A}, \mathrm{G},\mathrm{C},\mathrm{T}\}$ and $r=4$, but we consider
more general spaces as well. 
Define
$$
\delta_e = \frac{1}{r}\left(1 - e^{-\weight_e}\right).
$$
In the {\em $r$-state symmetric model},
we start at $\rho$ with a
sequence of length $k$ chosen uniformly in $\rcal^k$. 
Moving away from the root, each vertex $v$ in $T$
is assigned the sequence $(s_u^i)_{i=1}^k$ of its parent $u$ randomly ``mutated'' as follows:
letting $e$ be the edge from $u$ to $v$,
for each $i$, with probability $(r-1) \delta_e$
set $s_v^i$ to a uniform state in $\rcal-\{s_u^i\}$
(corresponding to a substitution), or otherwise
set $s_v^i = s_u^i$. Let $(s^i_{[n]})_{i=1}^k \in (\rcal^{[n]})^k$ be the sequences at the {\em leaves}.
Those are the sequences that are {\em observed}. We let
$\mu_{[n]}^T(s^i_{[n]})$ be the probability
of observing $s^i_{[n]}$ under $T$.

In the phylogenetic reconstruction problem,
we are given sequences $(s^i_{[n]})_{i=1}^k$,
assumed to have been generated under the
$r$-state symmetric model on an unknown
phylogeny $T$, and our goal is to recover
$T$ (without the root) as a {\em leaf-labeled
	tree} (that is, we care about the
locations of the species on the tree).
This problem is known to be well-defined
in the sense that, under our assumptions,
the phylogeny is uniquely
identifiable from the distribution
of the data at the leaves~\cite{Chang:96}.
A useful proxy to assess the ``accuracy''
of a reconstruction method is its sample
complexity or {\em sequence-length requirement}, roughly,
the smallest sequence length $k$ (as a function of $n$) such that
a perfect reconstruction is guaranteed
with probability approaching $1$ as 
$n$ goes to $+\infty$. (See Section~\ref{section:preliminaries}
for a more precise definition.)
A smaller
sequence-length requirement is an indication
of superior statistical performance.
We denote by $k_0(\Psi,n)$ the
sequence-length requirement of method $\Psi$.

Here we analyze the sequence-length requirement
of ML which, in our context,
we define as
\begin{equation*} 
\psiml_{n,k}((s^i_{[n]})_{i=1}^k)
\in \argmin_{T \in \phy^{(n)}_{f,g}[\quantum]}
\L_T[(s^i_{[n]})_{i=1}^k],
\end{equation*}
where
$
\L_T[(s^i_{[n]})_{i=1}^k]
= - \sum_{i=1}^k
\ln \mu^T_{[n]}(s^i_{[n]})
$
is the {\em log-likelihood}
(breaking ties arbitrarily).
In words ML selects a phylogeny
that maximizes the probability of observing
the data. This method is known to be
consistent, that is, the reconstructed phylogeny
is guaranteed to converge on the true tree
as $k$ goes to $+\infty$.
The previous best known bound on the sequence-length
requirement of ML in this context
is $k_0(\psiml,n) \leq \exp(K n)$,
for a constant $K$,
as proved in~\cite{SteelSzekely:02}.

Our first main result is that,
for any constant $f,g, \quantum$, 
the ML sequence-length requirement 
$k_0(\psiml, n)$
grows at most {\em polynomially} in $n$,
where the degree of the polynomial
depends on $g$.
Such a bound had been previously established
for other reconstruction methods, including
certain types of distance-matrix
methods~\cite{ErStSzWa:99a}, and 
it was a long-standing open problem
to show that a polynomial bound holds for ML as well. 
Interestingly,
our simple proof in fact uses the result of~\cite{ErStSzWa:99a}.
(The argument is detailed in Section~\ref{section:preliminaries}.)
Further, it is known that no method
in general achieves a better bound
(up to a constant in the degree of the
polynomial)~\cite{Mossel:03}.

On the other hand, 
our second---significantly more challenging---result establishes a
phase transition on the sequence-length
requirement of ML.
{\em We show that
when the maximum branch length of the true phylogeny 
is constrained to lie below 
a given threshold, an improved sequence-length
requirement is achieved, namely
that }
\begin{equation}
\label{eq:main-result-intro}
k_0(\psiml, n)
= 
O(\log n),\quad
\text{if $g < g^* := \ln\sqrt{2}$.}
\end{equation}
The same sequence-length requirement has been
obtained previously for other methods~\cite{Mossel:04a,DaMoRo:11a,Roch:10,brown2012fast,MiHiRa:13},
but as we mentioned above our result is the first one
that concerns an important method in practice
and greatly improves previous bound for ML.
It is known further that a sub-logarithmic sequence-length requirement is not possible in general for any method~\cite{Mossel:03}.
That can be seen by the
following back-of-the-envelope calculation: when $k = \Theta(\log n)$,
the total number of datasets is $e^{\Theta(n\log n)}$,
which is asymptotically of the same order as the number of phylogenies in $\phy^{(n)}_{f,g}[\quantum]$
(see, e.g.,~\cite{SempleSteel:03}); and, intuitively, we need at least as many datasets as we have possible phylogenies. 
Note that not all methods achieve 
the logarithmic sequence-length requirement
in~\eqref{eq:main-result-intro}.
The popular distance-matrix method 
Neighbor-Joining~\cite{SaitouNei:87}, 
for instance,
has been shown to require exponential
sequence lengths in general for any $g$~\cite{LaceyChang:06}. 

The question of whether the threshold 
in~\eqref{eq:main-result-intro}
is tight,
however, 
is not completely resolved and we do not address
this issue here. 
The quantity
$g^*$ corresponds to what is sometimes known as the
\emph{Kesten-Stigum threshold}~\cite{KestenStigum:66},
which is roughly speaking the threshold at which
reconstructing the root state from a ``weighted majority''
of the leaf states becomes 
no better than guessing at random as the depth of the (binary) tree diverges. See, e.g.,~\cite{EvKePeSc:00,Mossel:04}
for some background on this problem. See also~\cite{JansonMossel:04} for a different 
characterization of the threshold.
For $r=2$, no root state inference method has a better
threshold than weighted majority~\cite{Ioffe:96a}
and the bound in~\eqref{eq:main-result-intro} 
is known to be tight~\cite{Mossel:04a}.
In general, the question is not settled~\cite{Mossel:01,Sly:09,MoRoSl:11}.
In the case $r=4$, the most relevant
in the biological context, the threshold
$g^*$ translates into a $22\%$ substitution
probability along each edge. 
In general, many factors
affect the maximum branch length of a phylogeny, including how
densely sampled the species are and which
genes (whose mutation rates vary widely) are used.

To understand
the connection between root state reconstruction
and phylogenetic reconstruction, a connection which
was first articulated by Steel~\cite{Steel:01},
note that the depth of the phylogeny plays
a key role in phylogenetic reconstruction.
That is because we only have access to the sequences
at the {\em leaves} of the tree. 
When good estimates of internal
sequences are available, the phylogeny is
``shallower'' and reconstructing the deeper parts
of the tree is significantly easier, leading
to a better sequence-length requirement for some methods.
That the phase transition in root state reconstruction
should translate into a phase transition in the sequence-length requirement of phylogenetic reconstruction, namely
from logarithmic in $n$ in the ``reconstruction phase''
to polynomial in $n$ in the ``non-reconstruction phase,''
is known as {\em Steel's Conjecture}~\cite{Steel:01}.
It was first established rigorously by Mossel~\cite{Mossel:04a} in the case of balanced
binary trees with $r=2$.

In~\cite{Mossel:04a,DaMoRo:11a,Roch:10,brown2012fast,MiHiRa:13},
in order to achieve logarithmic sequence-length
requirement in the Kesten-Stigum
regime, {\em new inference methods} that explicitly
estimate internal sequences were devised. 
In ML, by contrast, the internal sequences play
a more implicit role in the definition
of the likelihood and our analysis
of ML proceeds in a very different manner. 
{\em Our main technical contribution, which
may be of independent interest, is a
quantitative bound on the total
variation distance between the leaf state distributions
of two phylogenies
as a function of a notion of combinatorial
distance between them.} In words,
the more different are the trees, the more different
are the data distributions at their leaves.
We prove this new bound by constructing explicit
tests that distinguish between the leaf distributions.
For general trees, this turns out to present
serious difficulties, as sketched
in Section~\ref{section:preliminaries}. 
The bound on the total
variation distance in turn gives a bound
on the probability that ML returns an incorrect
tree and allows us to perform a union bound
over all such trees.

It is worth pointing out that the reconstruction
methods
of~\cite{Mossel:04a,DaMoRo:11a,Roch:10,brown2012fast,MiHiRa:13}
have the advantage of running in polynomial
time, while computing the ML phylogeny
is in the worst-case NP-hard~\cite{Roch:06,ChorTuller:06}.
So why care about ML? 
Of course, worst-case computational complexity results
are not necessarily relevant in practice
as real data tend to be more structured.
Actually, good heuristics for ML have been developed
that have achieved considerable practical success
in large-scale phylogenetic analyses and are now seen as the  standard approach~\cite{Stamatakis:06,smith2014inferring}.
A side consequence of our results is that, on
randomly generated data of sufficient sequence
length, using the methods
of~\cite{Mossel:04a,DaMoRo:11a,Roch:10,brown2012fast,MiHiRa:13}
we are in fact guaranteed to recover what happens
to be the ML
phylogeny with high probability in polynomial
time. Although this is not per se an algorithmic
result in that we do not directly
solve the ML problem, it does show that computing the ML phylogeny
is easier than previously thought in an average sense and may help
explain the success of practical heuristics.

Although the discretization assumption above
may not be needed, removing it in the logarithmic
regime appears to
present significant technical challenges.
Note that this assumption is also needed for the results of~\cite{DaMoRo:11a,Roch:10,brown2012fast,MiHiRa:13}.

\paragraph{Further related work}
There exists a large
literature
on the sequence-length requirement of 
phylogenetic reconstruction methods,
stemming mainly from the
seminal work of Erd\"os et al.~\cite{ErStSzWa:99a}
which were the first to highlight 
the key role of the depth
in inferring phylogenies. 
Sequence-length requirement results---both upper and lower bounds---have been derived for more general
models of sequence evolution~\cite{ErStSzWa:99b,Mossel:03,MosselRoch:06,BoChMoRo:06,choi2011learning}, including
models of insertions and deletions~\cite{AnDaHaRo:10,DaskalakisRoch:13},
for partial or forest reconstruction~\cite{CrGoGo:02,Mossel:07,DaMoRo:11b,tan2011learning,gronau2012fast},
and for reconstructing mixtures of phylogenies~\cite{MosselRoch:12,MosselRoch:13}.
These results have in some cases also inspired successful
practical heuristics~\cite{HuNeWa:99}.

The connection between root state reconstruction
and phylogenetic reconstruction has also been
studied in more general models of evolution
where mutation probabilities
are not necessarily symmetric~\cite{Roch:08,Roch:10,MoRoSl:11}.
A good starting point for the extensive
literature on root state reconstruction
is~\cite{Peres:99,Mossel:04}.

Some bounds on the total variation distance
between leaf distributions that are related to our
techniques were previously obtained in the
special case of pairs of random trees, which are essentially at maximum combinatorial distance~\cite{SteelSzekely:06}. Similar
ideas were also used to reconstruct certain mixtures
of phylogenies in~\cite{MosselRoch:12}.

The sample complexity of maximum likelihood 
when all internal vertices are also observed 
was studied in~\cite{tan2011large}.

\paragraph{Organization}
The paper is organized as follows.
Basic definitions are provided 
in Section~\ref{section:preliminaries}. 
In Section~\ref{section:preliminaries}
we also state formally our main results and
give a sketch of the proof. 
The probabilistic aspects of the proof 
are sketched  
in Section~\ref{sec:prelim}. 
The combinatorial aspects are illustrated first in
a special case in Section~\ref{sec:homo}.
The general case is detailed in Section~\ref{sec:general}.
A few useful lemmas can be found in the appendix
for ease of reference.

\section{Definitions, Results, and Proof Sketch}\label{section:preliminaries}

In this section, we introduce formal definitions
and state our main results. 

\subsection{Basic Definitions}
\label{sec:basic-definitions}

\paragraph{Phylogenies}
A phylogeny is a graphical representation
of the speciation history of a collection of organisms.
The leaves correspond to current species (i.e., those
that are still living).
Each branching indicates a speciation event.
Moreover we associate to each edge a
positive weight.
As we will see below, this weight 
corresponds roughly to the
amount of evolutionary change on the edge.
More formally, we make the following definitions.
See e.g.~\cite{SempleSteel:03} for more background.
Fix a set of leaf labels (or species names)
$X = [n] = \{1,\ldots,n\}$.
\begin{definition}[Phylogeny]\label{def:phylo}
A \emph{weighted binary phylogenetic $X$-tree}
(or \emph{phylogeny} for short)
$T = (V,E;\phi;\weight)$ is a tree
with vertex set $V$,
edge set $E$,
leaf set $L$ with $|L| = n$,
edge
weights $\weight : E \to (0,+\infty)$,
and a bijective leaf-labeling $\phi : X \to L$
(that assigns ``species names'' to the leaves).
We assume that the degree of all internal vertices
$V-L$ is exactly $3$.
We let $\tcal_l[T] = (V,E;\phi)$ be the
{\em leaf-labelled topology} of $T$.
We denote by $\trees_n$ the set of all
leaf-labeled trees on $n$ leaves with internal degrees $3$
and we let $\trees = \{\trees_n\}_{n\geq 1}$.
We say that two phylogenies are isomorphic if there
is a graph isomorphism between them that preserves
the edge weights and the leaf-labeling. 
\end{definition}
\noindent We restrict ourselves
to the following setting introduced
in~\cite{DaMoRo:11a}.
\begin{definition}[Regular phylogenies]
\label{def:regphy}
Let $0 < \quantum \leq f \leq g < +\infty$.
We denote by $\phy^{(n)}_{f,g}[\quantum]$
the set of phylogenies $T = (V,E; \phi;\weight)$
with $n$ leaves
such that $f \leq \weight_e \leq g$,
$\forall e\in E$, where moreover $\weight_e$
is a multiple of $\quantum$. We also
let $\phy_{f,g}[\quantum] = \bigcup_{n \geq 1}
\phy^{(n)}_{f,g}[\quantum]$. (We assume
for simplicity that $f$ and $g$ are themselves
multiples of $\quantum$.)
\end{definition}
\noindent To illustrate our techniques, 
we also occasionally appeal to the
special case of homogeneous phylogenies. 
For an integer $h \geq 0$ and $n = 2^h$,
a homogeneous phylogeny is 
an $h$-level
complete binary tree
$\hmgt{h}_{\phi,\weight} = (\hmgv{h}, \hmge{h}; \phi; \weight)$
where the edge weight function
$\weight$ is identically $g$
and $\phi$ may be any one-to-one labeling
of the leaves.

\paragraph{Substitution model}
We use the following standard model of DNA sequence evolution.
See e.g.~\cite{SempleSteel:03} for generalizations.
Fix some integer $r > 1$.
\begin{definition}[$r$-State Symmetric Model of Substitution]\label{def:mmt}
Let $T = (V,E;\phi;\weight)$ be a phylogeny
and $\rcal = [r]$.
Let $\pi = (1/r,\ldots, 1/r)$ be the uniform distribution
on $[r]$ and 
let
$
\delta_e = \frac{1}{r}\left(1 - e^{-\weight_e}\right).
$
Consider the following stochastic
process.
Choose an arbitrary root $\rho \in V$.
Denote by $E_\downarrow$ the set $E$
directed away from the root.
Pick a state for the root at random according
to $\pi$. Moving away from the root toward the leaves,
apply the following Markov transition matrix to each edge $e = (u,v)$ independently:
\begin{equation*}
(M(e))_{ij} = (e^{\weight_e Q})_{ij}
=
\left\{
\begin{array}{ll}
1 -(r - 1) \delta_e & \mbox{if $i=j$}\\
\delta_e & \mbox{o.w.}
\end{array}
\right.
\end{equation*}
where
\begin{equation*}
Q_{ij}
=
\left\{
\begin{array}{ll}
-\frac{r - 1}{r} & \mbox{if $i=j$}\\
\frac{1}{r} & \mbox{o.w.}
\end{array}
\right.
\end{equation*} 
(Or equivalently run a continuous-time Markov jump process with rate matrix $Q$ started at the state of $u$.)
Denote the state so obtained by $s_V = (s_v)_{v\in V}$. In particular, $s_{L}$ is the state vector at the leaves,
which we also denote by $s_X$.
The joint distribution of $s_V$ is given by
\begin{equation*}
\mu^T_V(s_V) = \pi(s_\rho)
\prod_{e = (u,v) \in E_\downarrow}
\left[M(e)\right]_{s_{u} s_{v}}.
\end{equation*}
For $W \subseteq V$, we denote by $\mu^T_W$ the marginal
of $\mu^T_V$ at $W$.
We denote by $\dcal[T]$
the probability distribution of $s_V$. 
(It can be shown that the choice of the root does not
affect this distribution. See e.g.~\cite{Steel:94}.)
We also let $\dcal_l[T]$ denote
the probability distribution of
$
s_X := \left(s_{\phi(a)}\right)_{a\in X}.
$
More generally we take
$k$ independent samples $(s^{i}_{V})_{i=1}^k$
from the model above, that is, $s^1_V, \ldots,s^k_V$
are i.i.d.~$\dcal[T]$.
We think of $(s_v^i)_{i=1}^k$
as the sequence at node $v \in V$.
When considering
many samples $(s^i_V)_{i=1}^k$,
we drop the superscript
to refer to a single sample $s_V$.
\end{definition}
\noindent The case $r=4$, known as the Jukes-Cantor (JC)
model~\cite{JukesCantor:69}, is the
most natural choice in the biological context
where,
typically,
$\rcal = \{\mathrm{A}, \mathrm{G},\mathrm{C},\mathrm{T}\}$ and the model describes how DNA sequences
stochastically evolve by point mutations
along an evolutionary tree under the assumption
that each site in the sequences evolves
independently and identically.
For ease of presentation, we restrict ourselves
to the case $r=2$, known as the Cavender-Farris-Neyman (CFN) model~\cite{Cavender:78,Farris:73,Neyman:71},
but our techniques extend to a general $r$
in a straightforward manner. The CFN model
is equivalent to a ferromagnetic Ising
model with a free boundary (see e.g.~\cite{EvKePeSc:00}). 
For now on,
we fix $r=2$. 
We denote
by $\E_T, \P_T$ the expectation and
probability under the CFN model on 
a phylogeny $T$. We will
also use a random cluster representation
of the CFN model, which we recall
in Lemma~\ref{lem:fk}.
\srevision{
It will be convenient to work
on the state space $\{-1,+1\}$
rather than $\{1,2\}$. 
To avoid confusion, we introduce
a separate notation.
Let $\evr = (1,-1)$.
Given samples $(s^i_X)_{i=1}^k$,
we define $\boldsymbol{\s}_X = (\s_X^i)_{i=1}^k$ with
$\s_a^i = \evr_{s_a^i}$ for all $a,i$.
}

\paragraph{Phylogenetic reconstruction.}
In the {\em phylogenetic tree reconstruction (PTR) problem}, we are given
a set of sequences $(\s^i_X)_{i=1}^k$
and our goal is to recover the unknown generating tree.
An important theoretical criterion
in designing a PTR algorithm is
the amount of data required for
an accurate reconstruction.
At a minimum, a reconstruction algorithm should be consistent,
that is, the output
should be guaranteed to converge on the true tree as the sequence
length $k$ goes to $+\infty$.
Beyond consistency, the {\em sequence-length requirement (SLR)} of a
PTR algorithm is the sequence length required for a guaranteed high-probability
reconstruction. Formally:
\begin{definition}[Phylogenetic Reconstruction Problem]
A {\em phylogenetic
reconstruction algorithm} is a collection of maps
$\Psi = \{\Psi_{n,k}\}_{n,k \geq 1}$
from sequences $(\s^i_{X})_{i=1}^k \in (\{-1,+1\}^{X})^k$
to leaf-labeled trees in $\trees_n$,
where $X = [n]$.
Fix $\delta > 0$ (small) and
let $k(n)$ be an increasing function of $n$.
We say that $\Psi$
solves the {\em phylogenetic reconstruction problem}
on $\phy_{f,g}[\quantum]$
with sequence length $k = k(n)$
if for all $n \geq 1$,
and all $T \in \phy^{(n)}_{f,g}[\quantum]$,
\begin{equation*}
\prob\left[\Psi_{n,k(n)}\left((\s^i_{X})_{i=1}^{k(n)}\right) =
\tcal_l[T]\right] \geq 1 - \delta,
\end{equation*}
where $(\s^i_{X})_{i=1}^{k(n)}$ are i.i.d.~samples from $\dcal_l[T]$.
We let $k_0(\Psi,n)$ be the smallest
function $k(n)$ such that the above condition holds
(for fixed $f, g, \quantum, \delta$).
\end{definition}
\noindent We call the function $k_0(\Psi,n)$ the
{\em sequence-length requirement (SLR)}
of $\Psi$. For simplicity we
emphasize the dependence on $n$.
Intuitively the larger the tree, the more data is required to
reconstruct it.
One can also consider the dependence of $k_0$
on other structural parameters.
In the mathematical
phylogenetic literature,
the SLR has emerged  as a key measure
to compare the statistical performance of
different reconstruction methods.
A lower $k_0$ suggests a better statistical performance.
Note that, ideally,
one would like to compute the probability
that a method succeeds given a certain amount
of data, but that probability is a complex function
of all parameters. Instead the SLR, which can be
bounded analytically, is a proxy that measures
how effective a method is at extracting phylogenetic
signal from molecular data.

\paragraph{Maximum likelihood estimation}
The maximum likelihood (ML) estimator
for phylogenetic reconstruction
is given (in our setting) by
\begin{equation}\label{eq:mle}
\psiml_{n,k}((\s^i_X)_{i=1}^k)
\in \argmin_{T \in \phy^{(n)}_{f,g}[\quantum]}
\L_T[(\s^i_X)_{i=1}^k],
\end{equation}
where
$
\L_T[(\s^i_X)_{i=1}^k]
= - \sum_{i=1}^k
\ln \mu^T_{X}(\s^i_{X})
$ (breaking ties arbitrarily).
In words the ML selects a phylogeny
which maximizes the probability of observing
the data.
Computation of the likelihood on a given phylogeny can
be performed efficiently,
but solving the maximization problem above over
tree space is computationally intractable~\cite{Roch:06,ChorTuller:06}.
Fast heuristics have been developed
and are widely used~\cite{GuLeDuGa:05,Stamatakis:06}.
Despite the practical importance
of ML, much remains to be understood about its
statistical properties.
Consistency, that is, the convergence of
the ML estimate $\hattml$
on the true tree as the number of sites
$k \to \infty$, has been
established~\cite{Chang:96}. But
obtaining tight bounds on
the SLR of ML has remained an outstanding
open problem in mathematical phylogenetics.
The best previous known bound, due
to~\cite{SteelSzekely:02} was that
under the CFN model
there exists $K > 0$
such that $k_0(\psiml,n) \leq \exp(K n)$.

\subsection{Main results}
\label{section:results}

Our main result is the following.
\begin{theorem}[Sequence-length requirement of maximum likelihood]\label{thm:main}
Let $0 < \quantum < f < g^* := \ln\sqrt{2}$.
Then the sequence-length requirement of maximum likelihood for the phylogenetic tree reconstruction problem on
$\phy_{f,g}[\quantum]$
is
\begin{displaymath}
k_0(\psiml, n)
= \begin{cases}
O(\log n), & \text{if $g < g^*$,}\\
\poly(n), & \text{if $g \geq g^*$}.
\end{cases}
\end{displaymath}
\end{theorem}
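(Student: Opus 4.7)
The plan is to reduce ML error to a union bound over alternative topologies, each term controlled by a total-variation distance between leaf distributions. Writing $T$ for the true phylogeny and $(\s^i_X)_{i=1}^k$ for the observed sequences, the optimality of $\psiml$ gives
\[
\P_T\!\bigl[\psiml_{n,k}\!\bigl((\s^i_X)_{i=1}^k\bigr) \neq \tcal_l[T]\bigr]
\leq
\sum_{\substack{T' \in \phy^{(n)}_{f,g}[\quantum] \\ \tcal_l[T'] \neq \tcal_l[T]}}
\P_T\!\bigl[\L_{T'} \leq \L_T\bigr].
\]
For a single alternative $T'$, the log-likelihood ratio $\L_{T'} - \L_T$ is a sum of $k$ i.i.d.\ summands whose single-site contribution is uniformly bounded (the constraint $f \leq \weight_e \leq g$ keeps every single-site probability bounded away from $0$), and a Bhattacharyya coefficient bound yields
\[
\P_T\!\bigl[\L_{T'} \leq \L_T\bigr]
\leq
\Bigl(\textstyle\sum_{\sigma} \sqrt{\mu^T_X(\sigma)\,\mu^{T'}_X(\sigma)}\Bigr)^{k}
\leq
\exp\!\bigl(-c\, k\, \tv(\dcal_l[T], \dcal_l[T'])^2\bigr)
\]
for a constant $c = c(f,g) > 0$ (using $\sum_\sigma \sqrt{PQ} = 1 - H^2 \leq 1 - \tv^2/2$ with $H$ the Hellinger distance). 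The whole problem therefore reduces to lower bounds on $\tv(\dcal_l[T], \dcal_l[T'])$ combined with counting bounds on families of alternatives.

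For $g \geq g^*$ I would combine the single-pair bound with the Four-Point Method analysis of Erd\"os-Steel-Sz\'ekely-Warnow~\cite{ErStSzWa:99a}. Their polynomial-SLR proof extracts, for any two distinct $T, T' \in \phy^{(n)}_{f,g}[\quantum]$, a quartet whose true four-point distance differs by $\Omega(\quantum)$; the corresponding pair correlations $\E_T[\s_i \s_j] - \E_{T'}[\s_i \s_j]$ then differ by at least $1/\poly(n)$, yielding a uniform separation $\tv(\dcal_l[T], \dcal_l[T']) \geq 1/\poly(n)$. Since $|\phy^{(n)}_{f,g}[\quantum]| \leq \exp(O(n \log n))$, the resulting union bound succeeds once $k \geq \poly(n)$.

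The substantive case is $g < g^*$, where the target is $k = O(\log n)$. With $k = C \log n$, each single-pair bound is only $n^{-O(\tv^2)}$, and a naive union bound over the $\exp(\Theta(n \log n))$ alternatives would require $\tv^2 \gtrsim n$, which is impossible. The main technical innovation is to stratify alternatives by a combinatorial distance $d(T, T')$ (in the spirit of $\swap$ or $\spr$) and prove a \emph{quantitative} TV lower bound $\tv(\dcal_l[T], \dcal_l[T']) \geq F(d(T,T'))$ in which $F$ grows sharply enough in $d$. The union bound then becomes
\[
\P_T\!\bigl[\psiml_{n,k} \neq \tcal_l[T]\bigr]
\leq
\sum_{d \geq 1} N(d)\, \exp\!\bigl(-c\, k\, F(d)^2\bigr),
\]
where $N(d)$ combinatorially bounds the number of alternatives at distance $d$; the Kesten-Stigum condition $g < g^*$ should ensure $k\, F(d)^2 \gtrsim \log N(d) + \log d + \log(1/\delta)$ term-by-term at $k = O(\log n)$.

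The main obstacle, and the central technical contribution of the paper, is proving the quantitative TV lower bound $F(d)$. My approach is constructive: for each pair $(T, T')$, I would build an explicit test statistic of the leaf data whose expectation differs between $\dcal_l[T]$ and $\dcal_l[T']$ by a margin controlled by $d(T, T')$, then convert the mean gap into a TV bound by a two-sample argument. In the Kesten-Stigum regime, internal-state estimation from leaf data is feasible with non-vanishing accuracy via weighted-majority / magnetization estimators, so tests localized at edges where $T$ and $T'$ disagree should detect the discrepancy. The hard part, treated first in the homogeneous setting of Section~\ref{sec:homo} before being handled in full generality in Section~\ref{sec:general}, is that different edges of disagreement may interact and affect overlapping leaf marginals, so the elementary tests must be decomposed into essentially independent local pieces and then aggregated; the random-cluster (FK) representation of the CFN model recalled in Lemma~\ref{lem:fk} is the natural analytical tool for performing this decoupling.
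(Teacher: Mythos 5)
Your high-level blueprint---stratify the union bound by a combinatorial distance $d(T,T')$, bound the per-pair error with an exponent proportional to $d$, and prove that exponent by building an explicit test based on ancestral-state estimation in the Kesten-Stigum regime---is the same strategy the paper follows. The polynomial-regime argument also works, though it is a genuinely different route: you extract a $1/\poly(n)$ TV separation from the ESSW short-quartet structure and then use Bhattacharyya, whereas the paper simply plugs the ESSW reconstruction event into the Neyman--Pearson inequality \eqref{eq:hypothesis-testing} (with its doubly-exponentially small error) and avoids any TV calculation.

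However, the quantitative reduction you set up for the logarithmic regime has a genuine gap. You bound
\[
\P_T[\L_{T'}\leq \L_T] \;\leq\; \Bigl(\textstyle\sum_\sigma\sqrt{\mu^T_X \mu^{T'}_X}\Bigr)^k \;\leq\; \exp\!\bigl(-c\,k\,\tv(\dcal_l[T],\dcal_l[T'])^2\bigr)
\]
and then require $k\,F(d)^2 \gtrsim \log N(d)$ term-by-term. Since $F(d)$ is a single-site total variation distance, $F(d)^2 \leq 1$ unconditionally, while $\log N(d) = \Theta(d\log n)$ (cf.\ Claim~\ref{claim:neighborhood-size}), so at $k = \kappa\log n$ the required inequality becomes $\kappa \geq c'\,d$. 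This fails for every $d$ larger than $\kappa/c'$, no matter how large the constant $\kappa$ is chosen, because $d$ ranges up to order $n$. In other words, the passage from the Bhattacharyya coefficient to $\tv^2$ is too lossy: what you actually need is an estimate of the Chernoff/Bhattacharyya exponent $-\ln \sum_\sigma\sqrt{\mu^T_X\mu^{T'}_X}$ itself growing like $\Omega(d)$, which is not bounded above by $\tv^2$ and cannot be extracted from any single-site TV bound.

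The paper avoids this trap precisely by not going through $\tv$. It constructs a test event $A$ on the $k$-sample space whose error probability is bounded directly by $\exp(-CkI)$, with $I$ proportional to the blow-up distance (Propositions~\ref{prop:distinguishing} and~\ref{prop:existence-batteries}), and then applies \eqref{eq:hypothesis-testing}. Equivalently, the paper lower-bounds the $k$-sample TV $\tv^k(T^\#,T^0)$ by $1 - \exp(-CkI)$ (Lemma~\ref{lemma:tv-bl}), a quantity that approaches $1$ at a rate increasing in $d$ while the single-sample TV does not. Your final paragraph actually gestures at this (``build an explicit test statistic'' and ``convert the mean gap''), and that part of your plan is right; but as formalized, your union bound is fed by the wrong quantity, and the fix is not a sharper $F(d)$ but a different target: bound $\P_{T^0}[A^c]+\P_{T^\#}[A]$ for an explicit multi-panel test directly, then invoke \eqref{eq:hypothesis-testing}, so that the exponent scales with the number of nearly-independent test panels rather than with $\tv^2$.
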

\noindent Combined with the results of~\cite{DaMoRo:11a}, this bound implies that the ML
estimator can be computed in polynomial
time with high probability as long as $k \geq k_0(\psiml,n)$.
Note that our definition of the ML
estimator implicitly assumes that we know
(or have bounds on) the
parameters $f,g,\quantum$ as the search
is restricted over the space $\phy^{(n)}_{f,g}[\quantum]$. In practice
it is not unnatural to restrict the space
of possible models in this way.
\srevision{We note finally that our proof in the regime $g \geq g^*$ holds under a much weaker discretization assumption (see below).}

\subsection{Proof overview}
\label{section:overview}

\paragraph{Known results: identifiability, consistency and
the Steel-Sz{\'e}kely bound}
Before sketching the proof of Theorem~\ref{thm:main},
we first mention previously known facts about
the statistical properties of ML in phylogenetics.
Fix $f,g,\quantum, n$
and let $\phy = \phy^{(n)}_{f,g}[\quantum]$.
Let $T^0 \in \phy$
be the generating phylogeny
and denote by $\boldsymbol{\s}_X = (\s_X^i)_{i=1}^k$
a set of $k$ samples from the corresponding
CFN model.
Under our assumptions, the model
is known to be identifiable~\cite{Chang:96}, that is,
$$
T^0 \neq T^\# \in \phy
\implies
\dcal_l[T^0] \neq \dcal_l[T^\#].
$$
Moreover
the ML estimator is known to converge on $T^0$
almost surely as $k \to \infty$~\cite{Chang:96}.
That fact follows from the law of large numbers
by which
$$
\frac{1}{k}\L_{T^\#}(\boldsymbol{\s}_X) \to -\E_{T^0}[\ln \mu^{T^\#}_X(\s_X)],
$$
as $k \to \infty$,
identifiability,
the positivity of the
Kullback-Leibler (KL) divergence,
that is,
$$
T^0 \neq T^\#
\implies
\kl(T^0\,\|\,T^\#)
:= -\E_{T^0}[\ln \mu^{T^\#}_X(\s_X)]
+ \E_{T^0}[\ln \mu^{T^0}_X(\s_X)] > 0,
$$
and a compactness argument~\cite{Wald:49}.

Steel and Sz\'ekely~\cite{SteelSzekely:02} also
derived along the same lines 
a quantitative upper bound on the
SLR.
They used Pinsker's inequality
to lower bound the KL divergence with the total
variation distance. And they appealed to
concentration inequalities
to bound the probability that any leaf vector state frequency
is away from its expectation, thereby 
quantifying the speed of convergence of the
log-likelihood. 
The argument ends up depending inversely on the
lowest non-zero state probability,
which is exponentially small in $n$,
leading to an exponential SLR. The Steel-Sz\'ekely bound does not make use of the structure
of the phylogenetic problem and, in fact, is
derived in a more general setting.

\paragraph{A polynomial bound}
In order to make use of the structure of the problem,
we propose a different approach.
The basic idea is to design for each
incorrect tree $T^\#$ a {\em statistical test}
that excludes it from being
selected by ML with high probability.
We first illustrate this idea by sketching a polynomial
bound on the SLR of ML. This proves the polynomial
regime of Theorem~\ref{thm:main}.

In~\cite{ErStSzWa:99a}, a reconstruction
algorithm was provided that, for any $g$,
returns the correct phylogeny with
probability $1 - \exp(-n^{C_1})$
as long as $k \geq n^{C_2}$ for a large enough
$C_2 > 0$. We refer to this algorithm as
the ESSW algorithm.
Letting $T^0$ be the true phylogeny
generating the data and $T^\# \neq T^0$ be
in $\phy$, denote by $D_{T^\#}$
the event that the ESSW algorithm
reconstructs (incorrectly) $T^\#$ and
by $M_{T^\#}$ the event that ML prefers $T^\#$
over $T^0$ (including a tie),
that is, the set of $\boldsymbol{\s}_X = (\s^{i}_X)_{i=1}^k$
such that
$
\L_{T^\#}(\boldsymbol{\s}_X)
\leq \L_{T^0}(\boldsymbol{\s}_X)
$ or equivalently
\begin{equation}\label{eq:mt}
\frac{\mu^{T^\#}_X(\boldsymbol{\s}_X)}{\mu^{T^0}_X(\boldsymbol{\s}_X)}
\geq 1.
\end{equation}
Then, a classical result in hypothesis testing
(see e.g.~\cite[Chapter 13]{LehmannRomano:05})
is that the sum of Type-I and Type-II errors
is minimized by the likelihood ratio test,
which in our context amounts to
\begin{equation}
\label{eq:hypothesis-testing}
\P_{T^0}[M_{T^\#}] + \P_{T^\#}[M_{T^\#}^c]
\leq \P_{T^0}[A] + \P_{T^\#}[A^c],
\end{equation}
for any test (i.e., event) $A \subseteq [r]^{nk}$.
Taking in particular $A = D_{T^\#}$, we get from~\cite{ErStSzWa:99a}
that
\begin{equation}
\label{eq:mt2}
\P_{T^0}[M_{T^\#}]
\leq \P_{T^0}[D_{T^\#}]
+ \P_{T^\#}[D_{T^\#}^c]
\leq 2 e^{-n^{C_1}},
\end{equation}
whenever $k \geq n^{C_2}$.
Recall (e.g.~\cite{SempleSteel:03})
that the number of binary
trees on $n$ labeled leaves is
$
(2n - 5)!! = e^{O(n \log n)}.
$
For each such tree, our discretization assumption
implies that there are at most $((g-f)\quantum + 1)^n$
choices of branch lengths.
Hence, provided we choose $C_1$ and
$C_2$ large enough
and taking a union bound over the $e^{O(n \log n)}$
possible trees $T^\# \neq T^0$
in $\phy$,
we obtain:
under our assumptions,
there exists $K > 0$
such that $k_0(\psiml,n) \leq n^K$.
In fact, note that this argument still works when the discretization $\quantum$
is of order $n^{-C_3}$ for any $C_3 > 0$.

This new bound on $k_0(\psiml,n)$ improves significantly over
the Steel-Sz\'ekely bound.
It has interesting computational implications
as well.
Although ML for phylogenetic
reconstruction is NP-hard~\cite{Roch:06,ChorTuller:06},
our polynomial SLR bound
in combination with the computationally efficient
ESSW algorithm
indicates that the ML estimator can be
computed efficiently with high probability
when data is generated from a CFN model
with polynomial sequence lengths.

\paragraph{A refined union bound}
Dealing with logarithmic-length sequences
is significantly more challenging. 
As the argument below suggests,
certain close-by trees cannot
be distinguished using logarithmic-length
sequences {\em with exponentially small
	failure probability}. In particular
the naive union bound above cannot
work in this regime.
Instead
we use a more refined union bound.

We make two
observations. 
We introduce $\blowup(T^\#,T^0)$,
the {\em blow-up distance}
between the topologies
of $T^\#$ and $T^0$, that is, roughly
the smallest number of edges that need to be rearranged
to produce $T^\#$ from $T^0$
(see Definition~\ref{def:blowup} for
a formal definition).
The number of
trees at blow-up distance $D$ from $T^0$ is at most
$O(n^{2D})$ so that it suffices to prove
\begin{equation}\label{eq:tvspr}
\P_{T^0}[M_{T^\#}] \leq C_1 e^{-C_2 k \blowup(T^\#,T^0)},
\end{equation}
in order to apply a union bound over blow-up
distances, when $k$ is logarithmic in $n$.
To prove~\eqref{eq:tvspr}, we need
to use an appropriate test $A \subseteq [r]^{nk}$ in~\eqref{eq:hypothesis-testing} and~\eqref{eq:mt2}---as we did before---but now the error probability of the test must depend on
the blow-up distance between $T^\#$ and $T^0$. 
That is, we need a test $A \subseteq [r]^{nk}$ such that
\begin{equation}\label{eq:spra}
\P_{T^0}[A^c] + \P_{T^\#}[A]
\leq C_1 e^{-C_2 k \blowup(T^\#,T^0)}.
\end{equation}
This
is intuitively reasonable as we expect similar
trees to be harder to distinguish.

We note in passing that~\eqref{eq:hypothesis-testing}
follows from the fact that the likelihood ratio
test achieves the total variation
distance between the models generated by $T^\#$ and $T^0$
under $k$ samples, which we denote by $\tv^k(T^\#,T^0)$.
Thus, our main
technical contribution can be interpreted
as
relating combinatorial and variational distances
between trees. 
This claim, which may be of independent interest, 
is proved along with
Theorem~\ref{thm:log-bound}.
\begin{lemma}[Relating combinatorial and
	variational distances]
	\label{lemma:tv-bl}
	For $T^\#, T^0 \in \phy_{f,g}[\quantum]$
	with $g < g^*$,
\begin{equation*}
\tv^k(T^\#,T^0)
\geq 1 - C_1 e^{-C_2 k \blowup(T^\#,T^0)}.
\end{equation*}
\end{lemma}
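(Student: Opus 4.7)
Since the $k$-sample total variation distance $\tv^k(T^\#,T^0)$ is attained by the likelihood ratio test (the classical Neyman--Pearson lemma applied to the product measures $\dcal_l[T^0]^{\otimes k}$ and $\dcal_l[T^\#]^{\otimes k}$), it suffices to exhibit a single event $A \subseteq (\{-1,+1\}^X)^k$ with
\begin{equation*}
\P_{T^0}[A^c] + \P_{T^\#}[A] \;\leq\; C_1 \, e^{-C_2 k \blowup(T^\#,T^0)}.
\end{equation*}
My plan is to build $A$ from many localized tests --- one per topological discrepancy --- and then aggregate them across the $k$ i.i.d.\ samples so that their biases compound via a Chernoff/Hoeffding-type bound. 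Because we are free to pick any test (we are not constrained to be computationally efficient), the argument is purely information-theoretic.

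The first step is combinatorial. Writing $D := \blowup(T^\#,T^0)$, I would unfold the blow-up distance by exhibiting $D$ ``primitive'' disagreements between $T^0$ and $T^\#$ equipped with pairwise (near-)disjoint ``witness sets'' of leaves. Each primitive disagreement should be an edge of $T^0$ (or a small cluster of edges) whose induced bipartition is refuted by $T^\#$, and its witness set should be a small group of leaves --- typically a quartet lying just across the disputed edge on each side --- from which the local topology can be probed. In the homogeneous setting of Section~\ref{sec:homo}, such a decomposition is visible directly from the tree's rigid geometry; in the general case of Section~\ref{sec:general}, a more delicate pruning/covering argument is required to untangle nested discrepancies.

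For each primitive disagreement $j \in [D]$ and each sample $i \in [k]$, I would attach a $\{-1,+1\}$-valued statistic $X_j^i$ computed from $\s^i_X$ restricted to the $j$-th witness set: for example, the sign of a four-point correlation, or the product of two weighted-majority reconstructions of the ancestral states flanking the disputed edge. The key probabilistic input is that the hypothesis $g < g^* = \ln\sqrt{2}$ places the CFN model strictly below the Kesten--Stigum threshold, so that weighted-majority ancestral reconstruction succeeds at a rate bounded away from $1/2$ uniformly in tree depth. This yields a constant $\eps > 0$ depending only on $f, g, \quantum$ such that
\begin{equation*}
\E_{T^0}[X_j^i] - \E_{T^\#}[X_j^i] \;\geq\; 2\eps
\end{equation*}
for every $i, j$. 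Setting $S = \sum_{i=1}^{k}\sum_{j=1}^{D} X_j^i$ and taking $A = \{S \geq \tau \, k D\}$ with $\tau$ at the midpoint of the two means, a Hoeffding/Azuma estimate applied to the (approximately independent) family $\{X_j^i\}_{i,j}$ gives both $\P_{T^0}[A^c]$ and $\P_{T^\#}[A]$ bounded by $e^{-c k D}$, which is exactly the claimed exponential rate.

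The main obstacle is making the ``approximate independence'' of the $X_j^i$ genuinely quantitative. Across samples independence is automatic, but within a single sample, statistics attached to nearby or nested primitive disagreements may be strongly coupled through shared ancestral substructure. Overcoming this requires the combinatorial disjointification of the witness sets to interact with a quantitative decay-of-correlations estimate for the CFN model below Kesten--Stigum --- a natural candidate being the random cluster representation of Lemma~\ref{lem:fk} --- so that disjointness of leaf supports can be upgraded to near-independence of local statistics with an error small enough to survive in the Hoeffding bound. This interplay between a global combinatorial decomposition of the blow-up distance and a local probabilistic estimate in the Kesten--Stigum regime is, I expect, the technical heart of the proof.
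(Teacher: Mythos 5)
Your high-level strategy---reduce to a distinguishing event via Neyman--Pearson, build roughly $\blowup(T^\#,T^0)$ local statistics with disjoint supports, extract a depth-uniform bias from sub-Kesten--Stigum ancestral reconstruction, and compound over $k$ samples with a Hoeffding-type bound---is exactly the architecture the paper uses; this much is correct and recognizably the battery-of-tests machinery of Section~\ref{sec:distinguishing-tests}.

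However, two steps would not go through as written. First, the four-point-correlation option you float as one candidate statistic fails: any statistic computed from a bounded-size leaf set flanking a disputed edge buried at graph depth $d$ has bias $e^{-\Theta(d)}$, not $\Theta(1)$, so the uniform separation $\E_{T^0}[X^i_j]-\E_{T^\#}[X^i_j]\geq 2\eps$ cannot hold for deep discrepancies. Only your second alternative---comparing reconstructed ancestral states at the roots of deep, dense subtrees---gives a depth-uniform gap, which is precisely why the test subtrees in Definition~\ref{def:battery} are $(\ell,\wp)$-dense clusters rather than quartets. Second, and more seriously, the decomposition into $\Theta(D)$ primitive disagreements with pairwise disjoint witness sets is not a preliminary step but the bulk of the proof. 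Matching subtrees produced by a greedy coloring (Section~\ref{sec:finding-matching}) can have images in $T^\#$ that overlap badly; when the overlap is large, the usable discrepancies produced by that procedure are too few, and an entirely different construction along the boundary of the overlap is required (Section~\ref{section:largeOverlap}). Even granting disjoint test subtrees, the connecting path between the two roots of a pair in $T^\#$ can pass through other panels' subtrees, so the per-site statistics are genuinely dependent; the paper neutralizes this with the random-cluster ``open/closed hat'' argument in the proof of Claim~\ref{claim:prop1-claim3}, which is a structural conditional-independence argument, not a generic quantitative decay-of-correlations estimate. Your instinct to invoke the random cluster representation is right, but the role it must play is to certify separation of the test subtrees after conditioning, not merely to bound mixing.
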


\paragraph{Phase transition: Homogeneous case}
We sketch our construction of the test $A$ above
in the special case of homogeneous trees.
Fix $g, n = 2^h$
and let $\hmgphy = \hmgphy^{(h)}_{g}$.
Let $T^0 \in \hmgphy$
be the generating phylogeny
and denote by $\boldsymbol{\s}_X = (\s^i_X)_{i=1}^k$
a set of $k$ samples from the corresponding
CFN model. In the homogeneous case,
it will be more convenient to work with
we call the swap distance  $\swap(T^\#,T^0)$,
which is defined, roughly, as the smallest number of
same-level swaps of subtrees of $T^\#$ in order
to obtain $T^0$.
(See Section~\ref{sec:homo} for a formal definition.)

Recall that a {\em cherry} is a pair
of leaves with a common immediate ancestor.
A result of~\cite{MosselRoch:12} shows that if
$T^\#$ is obtained from $T^0$ by applying a uniformly
random permutation of the leaf labels of $T^0$
then, with high probability, there is a positive
fraction (independent of $n$) of the cherries
in $T^0$ such that the corresponding leaves
in $T^\#$ are far (at least a large constant
graph distance away) from each other. Let
$\ccal$ be such a collection of cherries.
As a result, it was shown that
the total pairwise correlation over $\ccal$
as measured for instance by
$$
\zcal^i = \frac{1}{n}\sum_{(a,b) \in \ccal} \s_a^i \s_b^i,
$$
is concentrated on two well-separated values
under $T^0$ and $T^\#$, and the event
$$
A =
\left\{\frac{1}{k} \sum_{i = 1}^k \zcal^i > z\right\},
$$
for a well-chosen value of $z$,
satisfies an
exponential bound as in~\eqref{eq:spra}.

Returning to our context this argument suggests
that, if the incorrect tree $T^\#$ is far from the
generating tree $T^0$ in swap distance,
a powerful enough test can
be constructed from the cherries
of $T^0$. 
One of our main contributions
is 
to show how to generalize this idea
to trees at an {\em arbitrary} combinatorial distance. 
This is non-trivial because
$T^\#$ and $T^0$ may only differ by
{\em deep} swap moves, in which case
cherries cannot be used in distinguishing
tests. Instead, we show
how to find
{\em deep} pairs of test nodes that are close
under $T^0$, but somewhat far under $T^\#$ (see Proposition~\ref{prop:existence-batteries-hmg}).
To build a corresponding test, we
reconstruct the ancestral
states at the test nodes and estimate
the correlation between the reconstructed
values as above (see Proposition~\ref{prop:distinguishing}). Note that
the reconstruction phase transition plays a critical role
in this argument.

The main challenge is to find such deep test pairs
and relate their number to the swap distance. 
For this purpose, we design a procedure that identifies dense subtrees that are shared by 
$T^\#$ and $T^0$, working recursively from the leaves up
(see Claim~\ref{claim:cohanging-hmg})
and we prove that this procedure leads to
a number of tests that grows linearly 
in the swap distance (see Claim~\ref{claim:swaps-r}).
A further issue is to guarantee enough
independence between the tests, which we
accomplish via a sparsification step (see
Claim~\ref{claim:sparsification-hmg}).
The full argument for homogeneous trees is
in Section~\ref{sec:homo}.

\paragraph{General case} In the homogeneous
case, we produce a sufficient number of deep test pairs by
identifying subtrees that are matching in $T^\#$
and $T^0$. As we mentioned above, that can be done recursively starting
from the leaves. In
the case of general trees,
the lack of symmetry makes
this task considerably more challenging. One significant new
issue that arises is that the matching subtrees 
found through the same type of procedure
may in fact
``overlap'' in $T^\#$,
that is, have a non-trivial intersection. 

Hence, to construct a linear number of tests
in blow-up distance, we proceed
in two phases. We first attempt to identify matching subtrees similarly to the homogeneous case.
We show that if the overlap produced
is small, then a linear number of tests
(see Claim~\ref{claim:blowup-overlap})
can be constructed in a manner similar
to the homogeneous case
(see Proposition~\ref{prop:battery-many-r}),
although several new difficulties arise. See
Section~\ref{section:manyr} for details.

On the other hand, if the overlap in $T^\#$ is too large, then the first
phase will fail. In that case, we show that
a sufficient number of
deep test pairs can be found around the
``boundary of the overlap'' in $T^\#$ (see Proposition~\ref{prop:battery-large-overlap}).
That construction is detailed in Section~\ref{section:largeOverlap}.


\section{Distinguishing between leaf distributions}
\label{sec:prelim}
\label{sec:distinguishing-tests}

In this section,
we detail our main tool for distinguishing
between the leaf distributions of different phylogenies.
Fix $f,g < g^*,\quantum, n$
and let $\phy = \phy^{(n)}_{f,g}[\quantum]$.
Let $T^0 \in \phy$
be the generating phylogeny
and denote by $\boldsymbol{\s}_X = (\s^i_X)_{i=1}^k$
a set of $k$ i.i.d.~samples from the corresponding
CFN model.

As outlined in Section~\ref{section:overview},
our strategy is to construct for each
erroneous tree $T^\#\neq T^0$ a statistical
test that distinguishes between the two
leaf distributions.
The classification error of the test
will ultimately depend on the
combinatorial distance between $T^\#$ and
$T^0$. We show in Sections~\ref{sec:homo}
(for homogeneous trees)
and~\ref{sec:general} (for general trees) how to construct
such tests. Here we define formally the type of test
we seek to use
and derive bounds on their classification error.
%

\subsection{Definitions}

We first need several definitions.
Let $T = (V,E;\phi;w)$
be a phylogeny in $\phy$ and denote its leaf set by $L$.
Recall from Definition~\ref{def:phylo} that $X = [n]$ is the set of leaf labels.
We will work with a special type of
subtrees defined as follows.
\begin{definition}[Restricted subtree]
\label{def:restricted}
A (connected) subtree $Y$ of $T$ is {\em restricted} if
there exists $V_R \subseteq V$
such that $Y$ is obtained by keeping only
those edges of $T$ lying on the path
between two vertices in $V_R$. We typically
restrict $T$ to a subset of the leaves
(in which case we denote $V_R$ by $L_R$ instead).
When $|V_R| = 4$, $Y$ is called a {\em quartet}.
The topology of a binary quartet on $V_R = \{u,v,x,y\}$
is characterized by the pairs in $V_R$ lying on each side
of the internal edge, e.g., we write $uv|xy$ if
$\{u,v\}$ and $\{x,y\}$ are on opposite sides.
Let $Y$ and $Z$ be restricted subtrees
of $T$.
We let $Y\cap Z$ (respectively $Y \cup Z$)
be the {\em intersection} (respectively the {\em union})
of the edge sets of $Y$ and $Z$.
\end{definition}
\noindent We will need to
compare restricted subtrees in $T^\#$ and $T^0$.
For this purpose, we will use the following
metric-based definition. We first
recall the notion of a tree metric.
\begin{definition}[Tree metric]
	\label{def:metric}
	A phylogeny $T = (V,E;\phi;\weight)$
	is naturally equipped with a
	{\em tree metric}
	$\dist_T : X\times X \to (0,+\infty)$ defined as follows
	\begin{equation*}
	\forall a,b \in X,\ \dist_T(a,b)
	= \sum_{e\in\path_T(\phi(a),\phi(b))} \weight_e,
	\end{equation*}
	where $\path_T(u,v)$ is the set of edges on the path between
	$u$ and $v$ in $T$.
	We will refer to $\dist_T(a,b)$ as the {\em evolutionary
		distance} between $a$ and $b$.
	In a slight abuse of notation,
	we also sometimes use
	$\dist_T(u,v)$ to denote the
	evolutionary distance between
	any two vertices $u,v$ of $T$ as defined above.
	We will also let $\gdist_T(a,b)$ denote the
	graph distance between $a$ and $b$ in $T$,
	that is, the number of edges on the path between
	$a$ and $b$ in $T$.
\end{definition}
\noindent Tree metrics satisfy the following {\em four-point
		condition}: $\forall a_1,a_2,a_3,a_4 \in X$,
	\begin{equation}\label{eq:four-point}
	\dist_T(a_1,a_2)
	+ \dist_T(a_3,a_4) \leq \max\{
	\dist_T(a_1,a_3)
	+ \dist_T(a_2,a_4),
	\dist_T(a_1,a_4)
	+ \dist_T(a_3,a_2)
	\}.
	\end{equation}
	In the {\em non-degenerate} case, one of the three
	sums above is strictly smaller than the other two,
	which are equal. From the four-point condition,
	it can be shown that to each tree metric
	corresponds a unique phylogeny (with positive
	edge weights). See e.g.~\cite{SempleSteel:03}.
\begin{definition}[Matching subtrees]
\label{def:matching}
Let $T = (V,E;\phi;\weight)$ and
$T'= (V',E';\phi';\weight')$
be trees in $\phy$
with $n$ leaves $L$ and $L'$
respectively (and the same leaf label set
$X = [n]$). Let $Y$ and $Y'$
be subtrees of $T$ and $T'$
restricted respectively to
leaf sets $L_R \subseteq L$
and $L_R' \subseteq L'$ spanning the same leaf labels, that is, $\phi(L_R) = \phi(L_R')$. We say that
$Y$ and $Y'$ are {\em metric-matching}
or simply {\em matching}
if:
the tree metrics corresponding
to $Y$ and $Y'$ are identical.
Note that, even if $Y$ and $Y'$ are metric-matching,
their vertex and edge sets may differ. E.g.,
an edge in $Y$ may correspond to a
(non-trivial) path in $Y'$, and vice versa.
However, thinking of $Y$ and $Y'$ as continuous
objects, for each vertex $v \in Y$, we can create
a corresponding {\em extra vertex} $v'$ in $Y'$.
\end{definition}
\noindent As we mentioned above, we will assign
a distinguished vertex to each subtree 
included in the tests. We think of these
as roots. The following definitions apply to 
such rooted subtrees.
\begin{definition}[Dense subtree]
\label{def:dense-subtree}
Let $\ell$ and $\wp \leq 2^{\ell}$ be nonnegative integers.
Let $Y$ be a restricted subtree of $T$ rooted at $y$.
The {\em $\ell$-completion $\lfloor Y\rfloor_\ell$}
of $Y$ is obtained
by adding complete binary subtrees
with $0$-length edges below the leaves of $Y$
so that all leaves in $\lfloor Y\rfloor_\ell$ are at the
same graph distance from $y$ and the height of
$\lfloor Y\rfloor_\ell$
is the smallest multiple of
$\ell$ greater than the height of $Y$.
We say that
$Y$ is {\em $(\ell,\wp)$-dense} in $T$
if: the number of vertices on the $(i\ell)$-th
level of $\lfloor Y\rfloor_\ell$ is at least $(2^{\ell} - \wp)^i$
for all $i \geq 0$ such that $i\ell$ is smaller than the
height of $\lfloor Y\rfloor_\ell$.
\end{definition}
\begin{definition}[Co-hanging subtrees]
	\label{def:co-hanging}
	Two rooted restricted subtrees $Y$ and $Z$
	of a tree $T$
	with empty (edge) intersection
	are {\em co-hanging} if the path between
	their roots does not intersect the edges
	in their union.
	The {\em linkage $Y \oplus Z$}
	of co-hanging rooted restricted subtrees $Y$ and $Z$
	is the (unrooted) restricted subtree
	obtained by adding to $Y$ and $Z$
	the path joining their roots.
\end{definition}
\noindent We need one last definition.
\begin{definition}[Topped subtree]
Let $T$ be rooted.
Let $\gamma \in \nintgr$ and $Y$ be a restricted subtree
of $T$ rooted at $y$.
The {\em $\gamma$-topping $\lceil Y \rceil^\gamma$}
of $Y$ is obtained from $Y$ by adding the $\gamma$
edges
immediately above $y$ on the path to the root of $T$
(or the entire path if it is has fewer than $\gamma$
edges), which we refer to as the {\em hat}
of $\lceil Y \rceil^\gamma$.
\end{definition}

\subsection{Batteries}

In the proof below, we will compare the
true phylogeny
$T^0 = (V^0,E^0;\phi^0;\weight^0)$
to an incorrect phylogeny, which will be denoted by
$T^\# = (V^\#,E^\#;\phi^\#;\weight^\#)$.
Assume that $T^0$ and $T^\#$ are rooted
at $\rt^0$ and $\rt^\#$ respectively.
The comparison will be based
on the following combinatorial definition
and the associated statistical test below.
A {\em test pair} in $T^0$
is a pair of vertices
(leaf or internal; possibly extra) $(y^0, z^0)$ in $T^0$,
which we will refer to as {\em test roots},
as well as a pair of restricted subtrees $(Y^0, Z^0)$ of $T^0$
rooted at $y^0$, $z^0$ respectively, which we will refer
to as {\em test subtrees}.
Similarly we define a test pair in $T^\#$.
We call a {\em test panel} two corresponding
test pairs in $T^0$ and $T^\#$. See Figure~\ref{fig:non-proximal} for an illustration.
\begin{figure}
	\centering
	\includegraphics[width = 0.9\textwidth]{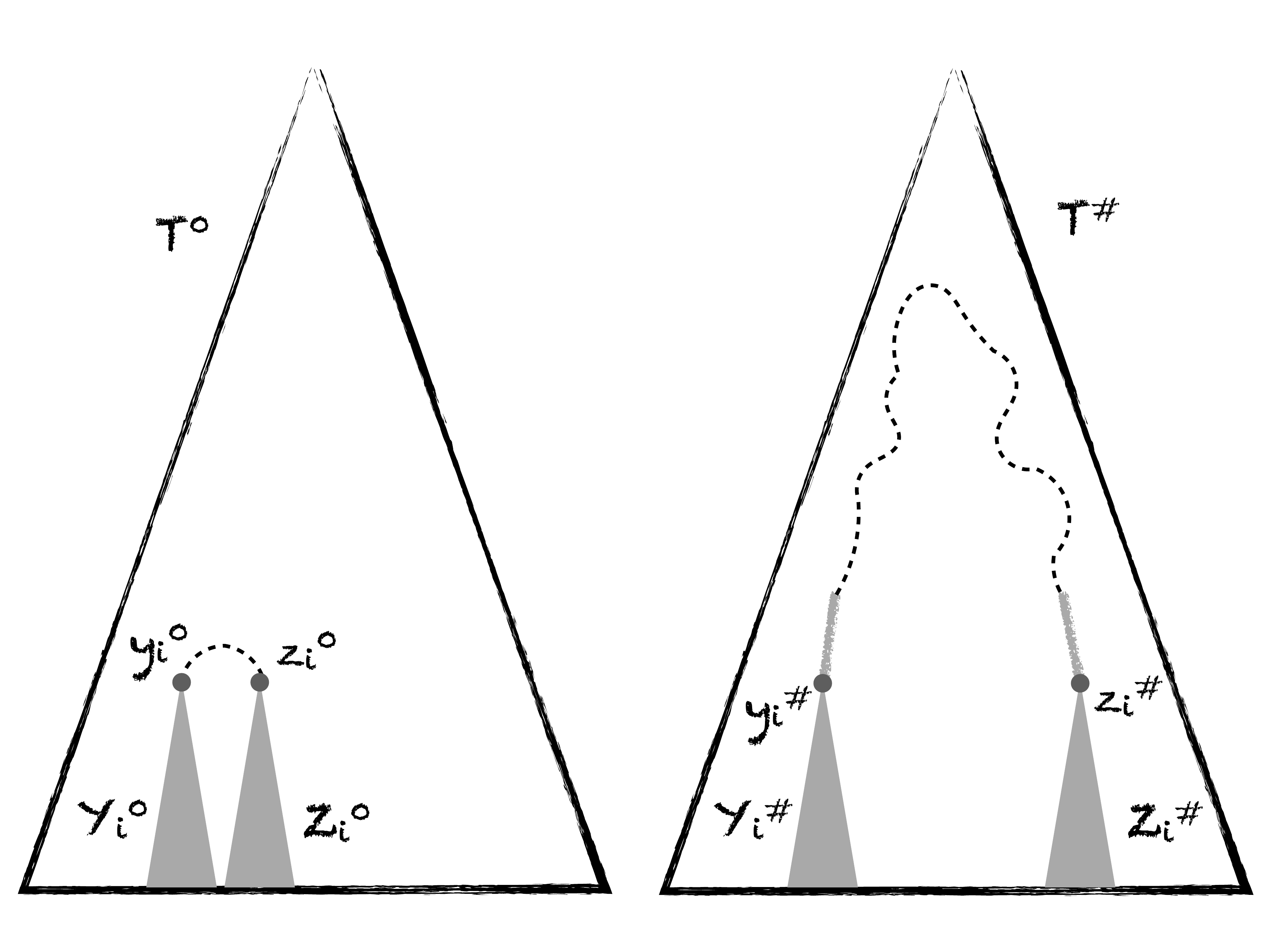}
	\caption{A test panel: proximal in $T^0$ and non-proximal in $T^\#$.}\label{fig:non-proximal}
\end{figure}

\srevision{At a high level, the idea behind our distinguishing statistic is to consider
pairs of subtrees, the test pairs, that are shared between
$T^\#$ and $T^0$ in the sense of Definition~\ref{def:matching} 
(Condition 2(a) below)
and that further have the property 
that the distance
between their roots differ in $T^\#$
and $T^0$ (Condition 2(d) below). The test itself 
(defined formally in Equations~\eqref{eq:asr-estimate},~\eqref{eq:distinguishing-statistic} and~\eqref{eq:distinguishing-statistic-2} below)
involves reconstructing the ancestral
states at the roots of the pairs
and comparing their correlation on
$T^\#$
and $T^0$.
To ensure a strong enough signal,
we require 
that the
subtrees are dense enough
in the sense Definition~\ref{def:dense-subtree}
(Condition 1(a) below)
to guarantee accurate reconstruction
of ancestral states
and that
the roots are close
(within a parameter $\Gamma$)
in either $T^0$ or $T^\#$
(Condition 2(c) below).
Although each test panel
contains at least one pair whose roots are close,
the other pair may not be---potentially producing unwanted
dependencies between the test panels
in cases where the roots are particularly 
far from each other (at distance at least $\gamma_t$). Such dependencies
are dealt with in Proposition~\ref{prop:distinguishing} below.
Another requirement of the test is that the paths
connecting the roots of each pair
do not intersect the corresponding
subtrees, in the sense of Definition~\ref{def:co-hanging} (Condition 2(b) below). That last property ensures that
the errors of the ancestral state estimates
are conditionally independent given
the root states.}
\begin{definition}[Battery of Tests]\label{def:battery}
Fix nonnegative integers $\ell \geq 2$,
$0 \leq \wp \leq 2^\ell - 1$,
$\Gamma \geq 1$ and $\gamma_t \geq 1$.
We say that a collection of test panels
$$
\{((y^0_i,z^0_i);(Y^0_i,Z^0_i))\}_{i=1}^I
\text{ in $T^0$
and }
\{((y^\#_i,z^\#_i);(Y^\#_i,Z^\#_i))\}_{i=1}^I
\text{ in $T^\#$}
$$
form an {\em $(\ell,\wp,\Gamma,\gamma_t,I)$-battery}
if:
\begin{enumerate}
\item {\bf Cluster requirements}
\begin{enumerate}
\item {\em (Dense subtrees)} All test subtrees
are $(\ell,\wp)$-dense.

\end{enumerate}
\item {\bf Pair requirements}
\begin{enumerate}
\item {\em (Matching subtrees)} The subtrees $Y^0_i$ and $Y^\#_i$
are matching for all $i = 1, \ldots, I$,
and similarly for $Z^0_i$ and $Z^\#_i$.

\item {\em (Co-hanging)} For $i=1,\ldots,I$,
we require that
$Y^0_i$ and $Z^0_i$ be co-hanging.
Similarly for the pairs in $T^\#$.

\item {\em (Proximity)} For $i=1,\ldots,I$,
if the graph distance between $y^0_i$
and $z^0_i$
is less than $\Gamma$,
we say that
the corresponding pair is {\em proximal}.
(If $y^0_i$
or $z^0_i$ is an extra vertex
we use the graph distance in $T^0$ to
the closest neighbor.)
Else,
if the graph distance between $y^0_i$
and $z^0_i$ is less than $\gamma_t$,
we say that
the corresponding pair is {\em semi-proximal}.
In both proximal and semi-proximal cases,
we
let
\begin{equation}\label{eq:pair2}
\mathcal{F}^0_i = Y^0_i \oplus Z^0_i.
\end{equation}
Else,
if the graph distance between $y^0_i$
and $z^0_i$ is greater than $\gamma_t$,
in which case we say that
the corresponding pair is {\em non-proximal},
we
define (with a slight abuse of notation)
\begin{equation}\label{eq:pair1}
\mathcal{F}^0_i = \lceil Y^0_i \rceil^{\gamma_t}
\cup
\lceil Z^0_i \rceil^{\gamma_t},
\end{equation}
to be the forest with corresponding edge set. 
We refer to the path
between $y^0_i$
and $z^0_i$ in $T^0$ as the {\em connecting path}
of the test pair. (In the non-proximal
case, the hats of $Y^0_i$ and $Z^0_i$
may not lie entirely on the connecting path.)
We similarly define $\mathcal{F}_i^\#$s from the
pairs in $T^\#$.

\item {\em (Evolutionary distance)} For each $i = 1, \ldots, I$, we have
$$
\left|\dist_{T^0}(y^0_i,z^0_i) -
\dist_{T^\#}(y^\#_i,z^\#_i)\right|
\geq \quantum,
$$
and at least one of the corresponding pairs
is proximal. Further, we let
\begin{equation}
\label{eq:def-alpha}
\alpha_i
=
\begin{cases}
+1, & \text{if $\dist_{T^0}(y^0_i,z^0_i) < \dist_{T^\#}(y^\#_i,z^\#_i)$}\\
-1, & \text{o.w.}
\end{cases}
\end{equation}

\end{enumerate}

\item {\bf Global requirements}
\begin{enumerate}
\item {\em (Global intersection)} The $\mathcal{F}^0_i$s
have empty pairwise intersection. Similarly
for the $\mathcal{F}^\#_i$s.
\end{enumerate}

\end{enumerate}
\end{definition}

\paragraph{Tests}
For a restricted subtree $Y$ rooted at $y$,
we denote by $X[Y]$ the leaf labels of $Y$
and we let
\begin{equation}
\label{eq:asr-estimate}
\hs_y^j
=
\begin{cases}
+1, & \text{if $\P_Y\left[\s_y = +1\,\Big|\,\s_{X[Y]}^j\right]
> \P_Y\left[\s_y = -1\,\Big|\,\s_{X[Y]}^j\right]$},\\
-1, & \text{o.w.}
\end{cases}
\end{equation}
be the MLE of the state at $y$ on site $j$,
given $\s_{X[Y]}^j$.
Let
$$
\{((y^0_i,z^0_i);(Y^0_i,Z^0_i))\}_{i=1}^I
\text{ in $T^0$
and }
\{((y^\#_i,z^\#_i);(Y^\#_i,Z^\#_i))\}_{i=1}^I
\text{ in $T^\#$}
$$
form a {\em $(\ell,\wp,\Gamma,\gamma_t,I)$-battery}
with corresponding $\alpha_i$s
(as defined in~\eqref{eq:def-alpha}).
The {\em distinguishing statistics} of the battery
are defined as
\begin{equation}
\label{eq:distinguishing-statistic}
\hdcal^0 = \sum_{i=1}^I \sum_{j=1}^k
\alpha_i\hs^j_{y^0_i}\hs^j_{z^0_i},
\quad \text{and}
\quad
\hdcal^\# = \sum_{i=1}^I \sum_{j=1}^k
\alpha_i\hs^j_{y^\#_i}\hs^j_{z^\#_i}.
\end{equation}
We observe that, because the subtrees in $T^0$
and $T^\#$ are matching (that is, they are identical
as sub-phylogenies as remarked after Definition~\ref{def:metric}), $\hdcal^0$
and $\hdcal^\#$ are in fact identical {\it as a function
of the leaf states}, which we denote
by $\hdcal$. However their
distributions, in particular their means
$\dcal^0 = \E_{T^0}[\hdcal]$ and $\dcal^\#
= \E_{T^\#}[\hdcal]$
respectively, differ as we quantify below.
The distinguishing
event is then defined as
\begin{equation}
\label{eq:distinguishing-statistic-2}
A = \left\{
\hdcal - \frac{\dcal^0 + \dcal^\#}{2} > 0
\right\}.
\end{equation}

\paragraph{Properties of batteries}
We show that the distinguishing event $A$ is likely
to occur under $T^0$, but unlikely to occur under $T^\#$.
The proof is in the next section.
\begin{proposition}[Batteries are distinguishing]
\label{prop:distinguishing}
For any positive integers $\wp$ and $\Gamma$,
there exist constants
$\ell = \ell(g,\wp) \geq 2$ large enough,
$\gamma_t = \gamma_t(g, \wp, \ell, \Gamma, \invquantum)$
large enough,
and
$C = C(g, \wp, \ell, \Gamma, \invquantum, \gamma_t) > 0$
small enough
such that the following holds.
If
$$
\{((y^0_i,z^0_i);(Y^0_i,Z^0_i))\}_{i=1}^I
\text{ in $T^0$
and }
\{((y^\#_i,z^\#_i);(Y^\#_i,Z^\#_i))\}_{i=1}^I
\text{ in $T^\#$},
$$
form a {\em $(\ell,\wp,\Gamma,\gamma_t,I)$-battery}
with corresponding $\alpha_i$s,
$\hdcal$, $\dcal^0$, $\dcal^\#$, and $A$,
then
$$
\max
\{
\P_{T^0}[A^c],
\P_{T^\#}[A]
\}
\leq
\exp\left(
- C k I
\right),
$$
for all $I$ and $k$.
\end{proposition}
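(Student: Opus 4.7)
The plan is to establish two facts about $\hdcal$: that the two means differ by $\dcal^0 - \dcal^\# \geq c_1 k I$ for some constant $c_1 > 0$ depending on the listed parameters, and that under each of $\P_{T^0}$ and $\P_{T^\#}$, $\hdcal$ concentrates within $c_1 k I / 4$ of its mean with probability at least $1 - \exp(-\Omega(kI))$. Combining the two bounds via the definition of $A$ gives the required estimate on $\max\{\P_{T^0}[A^c], \P_{T^\#}[A]\}$.

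For the means, I would use the tree Markov property together with the co-hanging condition~2(b) to deduce that $\hs_y$ and $\hs_z$ are conditionally independent given $(\s_y, \s_z)$, so
\[
\E_{T^\bullet}[\hs_y \hs_z] = \theta_Y\,\theta_Z\, e^{-\dist_{T^\bullet}(y,z)},
\]
where $\theta_Y := \E[\hs_y \mid \s_y = +1] \in [0,1]$ (the other conditional mean is $-\theta_Y$ by the symmetry of the CFN model) and the two-point correlation comes from the spin-flip representation $\s_a\s_b = \prod_{e \in \path(a,b)} \eta_e$ with independent $\eta_e \in \{\pm 1\}$ of mean $e^{-w_e}$. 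Since $Y_i^0$ and $Y_i^\#$ are matching (condition~2(a)) they share the same $\theta$-factor under either model, and a Kesten--Stigum-type lower bound for majority estimators on $(\ell,\wp)$-dense subtrees below $g^*$ (standard for CFN, via a second-moment analysis of recursive majority) gives $\theta_{Y_i}, \theta_{Z_i} \geq \theta^*(g,\wp) > 0$ once $\ell$ is chosen large enough as a function of $g$ and $\wp$. With $\alpha_i$ defined by~\eqref{eq:def-alpha}, each panel contributes non-negatively to $\dcal^0 - \dcal^\#$; a case analysis (both pairs proximal; exactly one pair proximal, using 2(c) and 2(d)) combined with $|\dist_{T^0}(y_i^0,z_i^0) - \dist_{T^\#}(y_i^\#,z_i^\#)| \geq \quantum$ and the bounds $\dist_{\text{prox}} \leq g\Gamma$, $\dist_{\text{nonprox}} \geq f\gamma_t$ produces a per-panel contribution $\geq c_1$, provided $\gamma_t$ is chosen large enough relative to $\Gamma, g, \quantum, \theta^*$. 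Summing over $i = 1, \ldots, I$ yields the claimed mean gap.

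For concentration under $T^0$ I would write $\hdcal = M + E$, where $M := \sum_{i,j}\alpha_i\theta_{Y_i}\theta_{Z_i}\,\s^j_{y_i^0}\s^j_{z_i^0}$ is the ``ideal'' statistic produced by perfect ancestral reconstruction and $E := \hdcal - M$ is the reconstruction noise. Conditional on the boundary spins $\{(\s^j_{y_i^0},\s^j_{z_i^0})\}_{i,j}$, the co-hanging condition and the edge-disjointness of the test subtrees make $E$ a sum of $kI$ conditionally independent centered summands bounded by $2$, so Hoeffding's inequality gives $\P[|E| > c_1 k I / 4] \leq \exp(-\Omega(kI))$, and this bound is independent of the conditioning. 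For $M$, the $k$ sites are independent, and within a single site the spin-flip representation yields
\[
\mathrm{Cov}\bigl(\s_{y_i^0}\s_{z_i^0},\s_{y_{i'}^0}\s_{z_{i'}^0}\bigr) = e^{-w(P_i^0)-w(P_{i'}^0)}\bigl(e^{2w(P_i^0 \cap P_{i'}^0)}-1\bigr),
\]
which vanishes for pairs $i, i'$ both proximal in $T^0$ (their connecting paths lie in the disjoint $\mathcal{F}^0_i, \mathcal{F}^0_{i'}$ and are hence edge-disjoint) and is at most $e^{-\Omega(\gamma_t)}$ when at least one of $i, i'$ is non-proximal in $T^0$ (the disjointness of the $\gamma_t$-hats in~3(a), together with the $\geq \gamma_t$-edge length of a non-proximal connecting path, forces an $\Omega(f\gamma_t)$ lower bound on the weight of the symmetric difference $P_i^0 \symdif P_{i'}^0$). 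Choosing $\gamma_t$ large enough therefore gives $\var(M_j) \leq C' I$, and Bernstein's inequality for the independent $M_j$ yields $\P[|M - \dcal^0| > c_1 k I / 4] \leq \exp(-\Omega(kI))$. The symmetric bound under $T^\#$ follows by swapping the roles of the two models.

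The main obstacle is the variance computation for $M$: the global-intersection condition~3(a) protects only the edge-disjointness of the $\mathcal{F}^0_i$s, which for non-proximal panels \emph{excludes} the connecting path, so one must work harder to exploit the disjointness of the $\gamma_t$-hats in order to lower-bound $w(P_i^0 \symdif P_{i'}^0)$ for possibly overlapping connecting paths. Achieving a genuine $O(I)$ per-site variance (rather than $O(I^2)$) is essential: a weaker variance bound would yield only $\exp(-\Omega(k))$, which would be insufficient for the refined union bound over trees at blow-up distance $\Theta(I)$ in the logarithmic-length regime.
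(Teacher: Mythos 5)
Your decomposition into a mean-gap claim plus a concentration claim matches the paper's structure, and your mean-gap argument is essentially the paper's: factoring $\E[\hs_y\hs_z] = \theta_Y\theta_Z e^{-\dist(y,z)}$ via the co-hanging requirement and CFN $\pm$-symmetry, a depth-uniform lower bound on the reconstruction biases $\theta_Y, \theta_Z$ for $(\ell,\wp)$-dense subtrees below $g^*$ (the paper invokes the explicit flow bound of Lemma~\ref{lem:ekps}; your second-moment sketch lands in the same place), and a per-panel case analysis using requirements 2(c) and 2(d) to extract a gap $\geq \dcal_\delta$ per term. That part is fine.

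The concentration step is where you diverge, and where there is a genuine gap that you flag but do not close. You split $\hdcal = M + E$ and want a per-site variance bound $\var(M_j) = O(I)$ via the covariance formula $\mathrm{Cov}(\s_{y_i}\s_{z_i}, \s_{y_{i'}}\s_{z_{i'}}) = e^{-w(P^0_i \symdif P^0_{i'})} - e^{-w(P^0_i)-w(P^0_{i'})}$. For two non-proximal panels this bound is not under control: the $\gamma_t$-hats forming $\mathcal F^0_i$ and $\mathcal F^0_{i'}$ are pairwise disjoint, but the \emph{middle} of a non-proximal connecting path lies outside its own $\mathcal F^0$, so nothing stops the middle of $P^0_{i'}$ from running straight through the hat of pair $i$ (and vice versa). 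One can arrange $P^0_i$ and $P^0_{i'}$ to share all but $O(1)$ edges while keeping both hats disjoint, giving per-pair covariance of order one, $\var(M_j) = \Omega(I^2)$, and only $\exp(-\Omega(k))$---as you correctly note, this is fatal for the $O(n^{2D})$ union bound.

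The paper does not estimate covariances at all. It splits on whether the number $|\ical^0_{np}|$ of non-proximal pairs is small or large. When small, those terms are frozen at their worst value $-1$, and the remaining proximal/semi-proximal terms are shown to be \emph{mutually independent} (not merely uncorrelated) by a layered generative argument that exploits requirement~3(a) together with the $\pm$-symmetry of the model (the key observation being that $\hs^j_{y^0_i}\hs^j_{z^0_i}$ is independent of the root spin $\s^j_{x^0_i}$ of $\fcal^0_i$); Azuma then gives $\exp(-\Omega(kI))$. When large, the random cluster representation (Lemma~\ref{lem:fk}) is used: each non-proximal pair has a hat segment of length $\geq \gamma_t/2$ that \emph{does} lie on the connecting path, and this hat contains a closed edge except with probability $e^{-f\gamma_t/2}$; a closed hat makes $\hs_y$ and $\hs_z$ exactly independent. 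Conditioning on the set of open hats, freezing the open-hat terms at $-1$, and applying Azuma to both the number of open hats and the surviving sum then yields the $\exp(-\Omega(kI))$ bound. In effect the random-cluster conditioning replaces the small-covariance target you could not reach with exact conditional independence, which is both simpler to prove and strong enough. To salvage a Bernstein-type route you would have to rediscover this conditioning anyway; the paper's approach is the missing ingredient in your proposal.
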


\subsection{Proof of Proposition~\ref{prop:distinguishing}}

We give a proof of Proposition~\ref{prop:distinguishing}.
The proof has several steps:
\begin{enumerate}
	\item We bound the accuracy of the ancestral
	state estimator~\eqref{eq:asr-estimate}
	using \srevision{Lemma~\ref{lem:ekps} from Appendix~\ref{section:appendix-1}}.
	\srevision{
	\begin{claim}[Accuracy of ancestral reconstruction]
	\label{claim:prop1-claim1}
	There is $\ell \geq 2$ large enough and a constant $0 < \beta_{g,\wp} < +\infty$ depending
	on $g$ and $\wp$ such that for
	any subtree $Y$ in the battery, it holds that
	\begin{equation}\label{eq:bias}
	\P_Y[\hs_y = \s_y] \geq \frac{1 + e^{-\beta_{g,\wp}}}{2},
	\end{equation}
	where  $y$ is the root of $Y$.
	\end{claim}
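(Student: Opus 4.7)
The plan is to reduce Claim~\ref{claim:prop1-claim1} to the classical root-reconstruction bound referenced as Lemma~\ref{lem:ekps}, which (as in Evans--Kenyon--Peres--Schulman) states that on a tree whose branching and edge weights lie strictly above the Kesten--Stigum curve, the MLE of the root state from the leaves has bias bounded below by a positive constant depending only on the model parameters. The task is therefore to verify a KS-type condition for the particular ``coarse-grained'' tree that one extracts from a $(\ell,\wp)$-dense subtree.

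First I would quantify how the hypothesis $g < g^\ast = \ln\sqrt{2}$ buys us room. That hypothesis is equivalent to $2 e^{-2g} > 1$, so $(2 e^{-2g})^\ell$ grows exponentially in $\ell$ while the relative ``loss'' $\wp / 2^\ell$ decays exponentially. Hence we can (and will) fix $\ell = \ell(g,\wp) \geq 2$ large enough that
\[
(2^\ell - \wp)\, e^{-2\ell g} \;\geq\; 1 + \eta,
\]
for some $\eta = \eta(g,\wp) > 0$. This is the one place where $g < g^\ast$ is used quantitatively, and it fixes the constant $\ell$.

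Next I would pass to the $\ell$-completion $\lfloor Y \rfloor_\ell$: the extra edges carry zero weight, hence correspond to perfect correlation, so the MLE of $\s_y$ from the leaves of $\lfloor Y \rfloor_\ell$ has identical error to the MLE from the leaves of $Y$. I would then coarse-grain this completion by keeping only the vertices at levels $0, \ell, 2\ell, \dots$ and connecting each one to its nearest kept ancestor by a single ``super-edge.'' Each super-edge corresponds to at most $\ell$ real edges of weight at most $g$, so its edge bias satisfies $\theta_{\mathrm{super}} \geq e^{-\ell g}$. By the density assumption, the $i$-th coarse-grained level has at least $(2^\ell - \wp)^i$ vertices, i.e.\ an average per-level branching of at least $2^\ell - \wp$. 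The choice of $\ell$ above then gives
\[
(2^\ell - \wp)\, \theta_{\mathrm{super}}^2 \;\geq\; 1 + \eta,
\]
placing the coarse-grained tree strictly above the KS curve. Applying Lemma~\ref{lem:ekps} to the coarse-grained tree yields a reconstruction bias at $y$ of at least $e^{-\beta_{g,\wp}}$ for some $\beta_{g,\wp} < +\infty$, and since the coarse-grained-leaf MLE uses strictly less information than the full-tree MLE, monotonicity of Bayes error (a standard data-processing argument) transfers the bound to $\hs_y$, yielding~\eqref{eq:bias}.

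The main obstacle I expect is that the density condition controls only the \emph{total} number of vertices at each level $i\ell$, not the branching at each individual internal vertex, so within a single $\ell$-block some sub-branches may die out while others survive and fan out more. The coarse-graining step above is precisely what converts this ``aggregate'' density bound into a per-level branching bound usable by Lemma~\ref{lem:ekps}. The only bookkeeping point requiring care is that the appendix lemma must allow for irregular trees whose branching ratio is controlled only in a uniform exponential-growth sense, and that the super-edge weights combine additively under the CFN model so that the effective $\theta_{\mathrm{super}}$ really is $\geq e^{-\ell g}$; both of these are standard but deserve to be checked against the precise statement of Lemma~\ref{lem:ekps}.
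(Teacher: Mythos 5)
Your approach is essentially the paper's: reduce to the EKPS flow bound (Lemma~\ref{lem:ekps}) and verify that the $(\ell,\wp)$-density and $g<g^\ast$ hypotheses give a finite resistance sum. The one genuine variant is that you coarse-grain to the $\ell$-level vertices and treat each $\ell$-block as a single super-edge, whereas the paper computes $\sum_e R_y(e)\Psi(e)^2$ directly on $\lfloor Y\rfloor_\ell$, indexing edges by depth $i\ell+j$ and bounding the inner $j$-sum by a constant $K_{\ell,\wp,g}$; both routes yield a geometric outer sum in $i$ whose ratio is controlled by the choice of $\ell$. Your coarse-graining is a legitimate simplification (the added $0$-weight edges give perfect correlation, and the coarse-grained CFN has the identical leaf distribution, so the data-processing step you invoke is in fact vacuous rather than needed).

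The real gap is that you never construct the flow $\Psi$ that Lemma~\ref{lem:ekps} quantifies over: ``placing the coarse-grained tree above the KS curve'' is not a hypothesis that lemma can consume. One must choose the even-split flow and actually bound $\sum_e R_y(e)\Psi(e)^2$, exactly as the paper does. When you do so, you will see that the concern you raise at the end — that density only controls aggregate level sizes, not per-vertex branching — is the decisive one, and the flow computation is where it must be resolved. The paper's bound $\Psi(e)\le \bigl[(2^\ell-\wp)^i\max\{1,2^{\ell-j}-\wp\}\bigr]^{-1}$ is a \emph{per-vertex} statement (``$\max\{1,2^{\ell-j}-\wp\}$ is a lower bound on the number of descendants on level $(i+1)\ell$ of a vertex at graph distance $j$ below level $i\ell$''), which does not follow from the aggregate count in Definition~\ref{def:dense-subtree} alone; it holds for the $\g$-clusters the paper actually builds (each $\g$-vertex has at most $\wp$ non-$\g$ $\ell$-children), and that is the interpretation under which the proof works. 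Your sketch needs the same per-vertex hypothesis, and the coarse-graining does not manufacture it — so you should either invoke that structural property of the test subtrees or tighten the density hypothesis, and then write out the flow bound to complete the proof.
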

	Crucially $\beta_{g,\wp}$
	does not depend on $n$, that is,
	the accuracy of the reconstruction
	does not deteriorate as one considers
	larger, deeper trees.}

	\item We show that the distinguishing 
	statistics~\eqref{eq:distinguishing-statistic} have well-separated expectations. That follows
	from the fact that, by the assumption 2(d) in Definition~\ref{def:battery}, the evolutionary distances
	between the roots of the corresponding
	subtrees differ on $T^\#$ and $T^0$. 
	The accuracy of the ancestral state estimation in Claim~\ref{claim:prop1-claim1}
	also guarantees that the signal is strong
	enough at the leaves.
	\srevision{
	\begin{claim}[Separation of expectations]
	\label{claim:prop1-claim2}
	There exists $\dcal_\delta > 0$
	depending on
	$g$, $\wp$, $\Gamma$ and $\invquantum$
	such that
	\begin{equation}\label{eq:dcal-delta}
	\dcal^0 - \dcal^\#
	\geq \dcal_\delta k I.
	\end{equation}	
	\end{claim}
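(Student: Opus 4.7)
The plan is to compute $\dcal^0$ and $\dcal^\#$ term by term using the tree structure of CFN, invoke the matching condition 2(a) to see that the corresponding terms on $T^0$ and $T^\#$ differ only through the exponential of the evolutionary distance between the test roots, and then apply condition 2(d) together with Claim~\ref{claim:prop1-claim1} to lower bound each summand by a strictly positive constant.

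First I would show that, for any test pair $((y,z);(Y,Z))$ satisfying the cluster and co-hanging requirements,
\begin{equation*}
\E_T\!\left[\hs_y\hs_z\right] \;=\; \lambda_y\,\lambda_z\,e^{-\dist_T(y,z)},
\end{equation*}
where $\lambda_y := \E_T[\hs_y \mid \s_y = +1]$ depends on $Y$ only through its sub-phylogeny. This follows from three ingredients: (i) the co-hanging condition 2(b) plus the tree Markov property imply that $\s_{X[Y]}$ and $\s_{X[Z]}$, and hence $\hs_y$ and $\hs_z$, are conditionally independent given $(\s_y,\s_z)$; (ii) the $\pm 1$ symmetry of CFN gives $\E[\hs_y\mid\s_y=s]=\lambda_y s$ with $\lambda_y\ge 0$; and (iii) the standard tensorization $\E_T[\s_y\s_z]=\prod_{e\in\path_T(y,z)}(1-2\delta_e)=e^{-\dist_T(y,z)}$. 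Since $Y^0_i$ and $Y^\#_i$ are matching in the sense of Definition~\ref{def:matching}, the joint law of $(\s_y,\s_{X[Y]})$ depends only on the common tree metric, so the biases agree: $\lambda_{y^0_i}=\lambda_{y^\#_i}=:\lambda_{y_i}$, and similarly for $z$. Plugging into~\eqref{eq:distinguishing-statistic} and using~\eqref{eq:def-alpha}, the difference telescopes into a sum of non-negative terms
\begin{equation*}
\dcal^0-\dcal^\# \;=\; k\sum_{i=1}^I \lambda_{y_i}\lambda_{z_i}\left|\,e^{-\dist_{T^0}(y^0_i,z^0_i)}-e^{-\dist_{T^\#}(y^\#_i,z^\#_i)}\right|.
\end{equation*}

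To get a uniform positive lower bound on each summand, I would choose $\ell$ large enough for Claim~\ref{claim:prop1-claim1} to yield $\lambda_{y_i},\lambda_{z_i}\geq e^{-\beta_{g,\wp}}$. For the exponential gap, condition 2(d) guarantees that at least one of the two evolutionary distances is proximal and therefore bounded by $g\Gamma$, while the two distances differ by at least $\quantum$; writing $a$ and $b$ for the smaller and larger of them,
\begin{equation*}
e^{-a}-e^{-b} \;=\; e^{-a}\left(1-e^{-(b-a)}\right) \;\geq\; e^{-g\Gamma}\left(1-e^{-\quantum}\right).
\end{equation*}
Combining gives~\eqref{eq:dcal-delta} with $\dcal_\delta = e^{-2\beta_{g,\wp}-g\Gamma}(1-e^{-\quantum})>0$, depending only on $g,\wp,\Gamma,\invquantum$ as required.

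There is no serious obstacle. The one point that deserves verification is the conditional-independence step in the non-proximal case, where $\mathcal{F}_i$ is the disjoint forest~\eqref{eq:pair1} rather than the linkage~\eqref{eq:pair2}; but co-hanging in Definition~\ref{def:battery}(2b) is imposed on $Y_i,Z_i$ themselves regardless of proximity, so the Markov argument applies uniformly in all three cases. The more delicate dependence-through-the-connecting-path issues—which is what forces the introduction of the parameter $\gamma_t$ and the non-proximal truncation~\eqref{eq:pair1}—will arise later when bounding the variance of $\hdcal$ across different panels in the proof of Proposition~\ref{prop:distinguishing}, not at the level of expectations.
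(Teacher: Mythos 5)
Your argument is correct and follows essentially the same route as the paper: the identity $\E_T[\hs_y\hs_z]=\lambda_y\lambda_z e^{-\dist_T(y,z)}$ is the paper's equation $-\ln\E_T[\hs_y\hs_z]=\beta_Y+\beta_Z+\dist_T(y,z)$ with $\lambda_y=e^{-\beta_Y}$, and the proximal-case bound $a\le g\Gamma$ together with the discretization gap $b-a\ge\quantum$ produces exactly the paper's constant $\dcal_\delta=e^{-\chi_{g,\wp,\Gamma}}(1-e^{-\quantum})$. The only cosmetic difference is that you make explicit the use of matching to equate the biases $\lambda_{y^0_i}=\lambda_{y^\#_i}$, which the paper leaves implicit in its remark that $\hdcal^0$ and $\hdcal^\#$ are the same function of the leaf states.
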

	}
	
	\item Finally, in the more delicate
	step of the argument, we establish that the distinguishing
	statistics~\eqref{eq:distinguishing-statistic} 
	are concentrated around their respective
	means. 
	\srevision{
	\begin{claim}[Concentration]
	\label{claim:prop1-claim3}
	There is 
	$\gamma_t >0$
	large enough
	and
	$C > 0$
	small enough
	such that
	$$
	\max
	\{
	\P_{T^0}[A^c],
	\P_{T^\#}[A]
	\}
	\leq
	\exp\left(
	- C k I
	\right),
	$$
	for all $I$ and $k$,
	where $A$ is defined in~\eqref{eq:distinguishing-statistic-2}.
	\end{claim}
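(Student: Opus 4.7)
}

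The plan is to split the deviation $\hdcal - \dcal^0$ (and symmetrically $\hdcal - \dcal^\#$) into two pieces by conditioning on the ``true'' spins at the roots of the test subtrees, and to bound each piece by roughly $\dcal_\delta kI/4$ with probability $1 - \exp(-CkI)$. Combined with the separation of expectations in Claim~\ref{claim:prop1-claim2}, this yields the desired tail bound. Let $\mathcal{S}$ denote the algebra generated by the root spins $\{\s^j_{y_i},\s^j_{z_i}\}_{i\leq I,\,j\leq k}$ and let $M := \E[\hdcal\mid\mathcal{S}]$. Because the $\mathcal{F}_i$'s are pairwise edge-disjoint (Global requirement~3(a)) and each $\hs^j_{y_i}$ depends only on the leaves of $Y_i \subseteq \mathcal{F}_i$, the Markov property on $T$ makes the estimators $\{(\hs^j_{y_i},\hs^j_{z_i})\}_{i,j}$ mutually independent across $(i,j)$ conditionally on $\mathcal{S}$. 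By CFN symmetry and Claim~\ref{claim:prop1-claim1}, $\E[\hs_y \mid \s_y] = \lambda_y \s_y$ with $\lambda_y \geq e^{-\beta_{g,\wp}}$, so $M = \sum_{i,j}\alpha_i\lambda_{y_i}\lambda_{z_i}\s^j_{y_i}\s^j_{z_i}$.

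Conditional on $\mathcal{S}$, $\hdcal$ is a sum of $kI$ independent bounded summands (each with conditional range at most $4$), so Hoeffding's inequality yields
\begin{equation*}
	\P\bigl[\,|\hdcal - M| > \tfrac{1}{4}\dcal_\delta kI \,\bigm|\, \mathcal{S}\bigr] \leq 2\exp\bigl(-c_1\dcal_\delta^2\, kI\bigr).
\end{equation*}
For the second piece I would use the ``edge path'' representation $\s_u\s_v = \prod_{e\in P(u,v)}\epsilon_e$, where $\{\epsilon^{(j)}_e\}$ are independent $\pm 1$ random variables of mean $e^{-w_e}$. A direct computation then gives
\begin{equation*}
\mathrm{Cov}\bigl(\s_{y_i}\s_{z_i},\,\s_{y_{i'}}\s_{z_{i'}}\bigr) = e^{-w(P_i\triangle P_{i'})}\bigl(1 - e^{-2w(P_i\cap P_{i'})}\bigr),
\end{equation*}
which vanishes whenever $P_i$ and $P_{i'}$ are edge-disjoint. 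The key combinatorial input is that, once $\gamma_t$ is taken large enough as a function of $g, \wp, \ell, \Gamma, \invquantum$, the $\gamma_t$-toppings together with pairwise disjointness of the $\mathcal{F}_i$'s force the symmetric difference of every pair of overlapping paths to contain at least $\gamma_t$ edges of weight $\geq f$. A McDiarmid bounded-differences argument over the independent family $\{\epsilon^{(j)}_e\}_{e,j}$, with a careful accounting of edge multiplicities $m_e = |\{i : e\in P_i\}|$ using the $\gamma_t$-buffer, then establishes that $M - \dcal^0$ is sub-Gaussian with variance proxy $O(kI)$, yielding $\P[|M - \dcal^0| > \dcal_\delta kI/4] \leq 2\exp(-c_2\dcal_\delta^2\,kI)$. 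A union bound over the two deviations, combined with Claim~\ref{claim:prop1-claim2}, gives $\P_{T^0}[A^c] \leq \exp(-CkI)$ with $C = C(\dcal_\delta, \gamma_t) > 0$; the bound on $\P_{T^\#}[A]$ is handled symmetrically.

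The delicate step is the edge-level concentration of $M$: without exploiting some structural separation, the naive per-sample bound $|M^{(j)}| \leq I$ would only deliver an $\exp(-Ck)$ rate via Bernstein applied to the i.i.d.\ samples. The role of $\gamma_t$ is precisely to ensure that no ``bottleneck'' of the tree can be crossed by too many connecting paths whose endpoints lie in disjoint $\mathcal{F}_i$'s---each pair must reserve two $\gamma_t$-long hat segments inside its own $\mathcal{F}_i$, and the exponential decay $e^{-f\gamma_t}$ along those hats absorbs the combinatorial counting of overlapping paths, giving the linear-in-$I$ variance proxy required for sub-Gaussian concentration on the $\sqrt{kI}$ scale.
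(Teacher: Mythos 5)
Your decomposition of $\hdcal - \dcal^0$ into $(\hdcal - M) + (M - \dcal^0)$ by conditioning on the root spins $\mathcal{S}$ is a reasonable starting point, and the first piece is fine: given $\mathcal{S}$, the estimators are indeed conditionally independent by the Markov property and the global disjointness requirement, and conditional Hoeffding gives $\exp(-\Omega(kI))$. The paper instead achieves \emph{unconditional} mutual independence of the proximal/semi-proximal terms via a symmetry trick—$\hs_{y_i}\hs_{z_i}$ is independent of the root state of $\fcal_i$—which sidesteps any need to control $M$ separately.

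The genuine gap is in your treatment of the second piece, $M - \dcal^0$. There are two problems. First, McDiarmid over the family $\{\epsilon^{(j)}_e\}$ cannot deliver $\exp(-\Omega(kI))$: flipping $\epsilon_e$ changes $M^{(j)}$ by $2m_e$ where $m_e = |\{i : e \in P_i\}|$, and the sum of squared Lipschitz constants is $k\sum_e (2m_e)^2$. Even if all connecting paths were disjoint ($m_e \leq 1$), you would have $\sum_e m_e^2 = \sum_i |P_i|$, which is unbounded because non-proximal connecting paths have no upper bound on their length; McDiarmid then yields a rate that degrades with the total path length, not $\exp(-\Omega(kI))$. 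Second, the covariance bound does not give a linear-in-$I$ variance proxy for \emph{fixed} $\gamma_t$: if $\Theta(I)$ non-proximal connecting paths share a bottleneck (nothing in the battery definition forbids this, since for non-proximal pairs $\fcal_i$ is only the forest of hats, not the full linkage), the variance of $M^{(j)}$ is $\Theta(I + I^2 e^{-f\gamma_t})$, which is $\omega(I)$ unless $\gamma_t = \Omega(\log I)$—but $\gamma_t$ must be a constant independent of $I$ and $n$. Small covariances do not, in themselves, produce the needed sub-Gaussian tail: you need either bounded differences (which fail here) or genuine conditional independence.

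The paper's resolution is exactly what your sketch is missing: the random-cluster (FK) representation of Lemma~\ref{lem:fk}. If at least one edge of the hat of a non-proximal pair is FK-closed—an event of probability at least $1 - e^{-f\gamma_t/2}$—then $\hs_{y_i}$ and $\hs_{z_i}$ are \emph{exactly} independent, not merely weakly correlated. Conditioning on the set of closed hats, the closed hats and the roots of the proximal forests form a separating set, so the surviving summands are mutually independent and Azuma applies with $kI$ genuine degrees of freedom; the open-hat terms are few (by Azuma applied to the FK bonds, using~\eqref{eq:large-expec}--\eqref{eq:large-inp-aux2}) and are set to their worst-case value. The paper also splits into a small-$|\ical^0_{np}|$ case (where non-proximal terms can simply be discarded) and a large-$|\ical^0_{np}|$ case; your sketch effectively only works in a regime analogous to the first. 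To repair your argument you would need to replace the McDiarmid/variance step with the FK cut argument, at which point you would essentially reproduce the paper's proof.
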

	}
	Proving concentration is complicated
	by the fact that the terms in the sums~\eqref{eq:distinguishing-statistic}
	are not independent. That is the result
	of the non-proximal pairs having connecting
	paths that may intersect with other test subtrees. \srevision{When the number of non-proximal
	pairs is not small, we show that the 
	corresponding terms are ``almost 
	independent'' of the other terms
	by bounding the probability that
	their hat is closed.
	A related argument is used
	in~\cite{MosselRoch:12}.}
\end{enumerate}
\begin{proof}[Proof of Proposition~\ref{prop:distinguishing}]
Proposition~\ref{prop:distinguishing}
follows immediately from
Claim~\ref{claim:prop1-claim3}.
\end{proof}
It remains to prove the claims.
\begin{proof}[Proof of Claim~\ref{claim:prop1-claim1} (Accuracy of ancestral reconstruction)]
	Let $Y$ be any subtree in the battery and let $y$
	be its root.
	To obtain a bound on the probability of
	erroneous ancestral
	reconstruction through Lemma~\ref{lem:ekps} (Appendix~\ref{section:appendix-1}),
	it suffices to bound the denominator in \eqref{eq:flow}
	for the $\ell$-completion $\lfloor Y \rfloor_\ell$.
	Choose a unit flow $\Psi$ such that
	the flow through each vertex
	on level $i\ell$ of $\lfloor Y \rfloor_\ell$
	splits evenly
	among its descendant vertices on level $(i+1)\ell$.
	Let $\ecal(\lfloor Y \rfloor_\ell)$ denote
	the edges of $\lfloor Y \rfloor_\ell$ and
	let
	$
	R_y(e) = \left(1 - \theta_e^2\right)
	\Theta_{y,x}^{-2}
	$ 
	where $\Theta_{\rt,y} = e^{-\dist_T(y,x)}$ and
	$\theta_e = e^{-\weight_e}$
	(as defined in~\eqref{eq:def-resistance} of
	Appendix~\ref{section:appendix-1}).
	Note that $0 \leq \left(1 - \theta_e^2\right) \leq 1$.
	Hence we have
	\begin{eqnarray*}
		\sum_{e = (x',x) \in \ecal(\lfloor Y \rfloor_\ell)} R_y(e) \Psi(e)^2
		&\leq&  \sum_{e = (x',x) \in \ecal(\lfloor Y \rfloor_\ell)}
		\Theta_{y,x}^{-2} \Psi(e)^2\\
		&\leq& \sum_{i=0}^{+\infty} \sum_{j=1}^\ell
		2^{i\ell + j} \left(e^{- (\ell i + j) g}\right)^{-2} \\
		&&\quad \times\left(
		\frac{1}{(2^\ell - \wp)^i \max\{1,2^{\ell-j} -\wp\}}\right)^2\\
		&\leq& \sum_{i=0}^{+\infty}
		2^{i\ell} e^{2 \ell i g} \left(\frac{1}{(2^\ell - \wp)^i}\right)^2\\
		&&\quad \times \left[\sum_{j=1}^\ell
		2^{j} e^{2 j g} \left(\frac{1}{\max\{1,2^{\ell-j} -\wp\}}\right)^2\right],
	\end{eqnarray*}
	where, in the second inequality, the quantity
	$\max\{1,2^{\ell-j} -\wp\}$ is a lower bound on the
	number of descendants on level $(i+1)\ell$
	of a vertex at graph distance $j$ below level
	$i\ell$.
	The term in square bracket on the last line
	is bounded by
	a positive constant $0 < K_{\ell,\wp,g} < +\infty$
	depending only on
	$\ell,\wp,g$.
	Recall that $g < g^* = \ln  \sqrt{2}$ and let $g' = \frac{g^* + g}{2}$. Choose $\ell$ large enough (depending only
	on $g$ and $\wp$) such that
	$$
	\frac{2^{\ell}}{(2^\ell - \wp)^{2}}
	\leq \frac{1}{e^{2 \ell g'}},
	$$
	which is possible because $g' < g^*$ and
	$e^{2 g^*} = 2$.
	Then
	\begin{eqnarray*}
		\sum_{e = (x',x) \in \ecal(\lfloor Y \rfloor_\ell)} R_y(e) \Psi(e)^2
		&\leq& K_{\ell,\wp,g} \sum_{i=0}^{+\infty} e^{2\ell i(g - g')}\\
		&=& \frac{K_{\ell,\wp,g}}{1 - e^{-2\ell (g' - g)}}\\
		&=& \frac{K_{\ell,\wp,g}}{1 - e^{-\ell (g^* - g)}} < +\infty.
	\end{eqnarray*}
	Hence by Lemma~\ref{lem:ekps} (Appendix~\ref{section:appendix-1})
	the probability of correct ancestral
	reconstruction is bounded away from
	$1/2$ from below. Let $\beta_{g,\wp}$, depending
	on $g$ and $\wp$ (and implicitly on $\ell$),
	such that
	\begin{equation}\label{eq:bias-def}
	\P_Y[\hs_y = \s_y] =: \frac{1 + e^{-\beta_Y}}{2} \geq \frac{1 + e^{-\beta_{g,\wp}}}{2},
	\end{equation}
	where the first equality is a definition.
\end{proof}
\begin{proof}[Proof of Claim~\ref{claim:prop1-claim2} (Separation of expectations)]
	Let $((y,z);(Y, Z))$ be a test pair in the battery
	with corresponding tree $T$ (equal to either $T^0$ or $T^\#$). Then,
	by the co-hanging requirement of the
	battery,
	the Markov property,
	and~\eqref{eq:bias},
	we have
	\begin{eqnarray}\label{eq:proximal1}
	-\ln \E_T[\hs_{y}\hs_{z}]
	&=& -\ln \left[
	\E_T[\E_T[\hs_{y}\hs_{z} \,|\,\s_y,\s_z]]
	\right]\nonumber\\
	&=& -\ln \left[
	\E_T[\E_T[\hs_{y}\,|\,\s_y]\E_T[\hs_{z} \,|\,\s_z]]
	\right]\nonumber\\
	&=& -\ln \left[
	\E_T[e^{-\beta_Y}\s_y e^{-\beta_Z}\s_{z}]
	\right]\nonumber\\
	&=&  \beta_Y + \beta_Z + \dist_T(y,z),
	\end{eqnarray}
	where $\beta_Y, \beta_Z \leq \beta_{g,\wp}$,
	as defined in~\eqref{eq:bias-def}.
	The last equality follows from
	\begin{eqnarray*}
		\E_T[\s_{y}\s_{z}]
		&=& \E_T[\E_T[\s_{y}\s_{z}\,|\,\s_y]]\\
		&=& \E_T[\s_{y}\E_T[\s_{z}\,|\,\s_y]]\\
		&=& \E_T[\s_{y} e^{-\dist_T(y,z)} \s_y]\\
		&=&  e^{-\dist_T(y,z)}\E_T[\s_{y}^2]\\
		&=& e^{-\dist_T(y,z)},
	\end{eqnarray*}
		where the third equality follows from
		the fact that $\P_T[\s_z = \s_y\,|\,\s_y] 
		= \frac{1 + e^{-\dist_T(y,z)}}{2}$,
		which can be deduced from Definition~\ref{def:mmt}.
	In the proximal case, we have further that
	\begin{eqnarray}\label{eq:proximal2}
	-\ln \E_T[\hs_{y}\hs_{z}]
	&=&  \beta_Y + \beta_Z + \dist_T(y,z)\nonumber\\
	&\leq& 2 \beta_{g,\wp} + g \Gamma\nonumber\\
	&=:& \chi_{g,\wp,\Gamma}.
	\end{eqnarray}
	
	Recall that the expected difference of
	$\hdcal$ under $T^0$ and $T^\#$
	is given by
	\begin{eqnarray*}
		\dcal^0 - \dcal^\#
		&=& \sum_{i=1}^I \sum_{j=1}^k
		\alpha_i \left(\E_{T^0}\left[\hs^j_{y^0_i}\hs^j_{z^0_i}\right]
		-\E_{T^\#}\left[\hs^j_{y^\#_i}\hs^j_{z^\#_i}\right].
		\right)
	\end{eqnarray*}
	Each term in the sum,
	whether it corresponds to a proximal-proximal,
	semi-proximal-proximal, or non-proximal-proximal case,
	is at least
	$$
	\dcal_\delta := e^{-\chi_{g,\wp,\Gamma}}(1 - e^{-\quantum}),
	$$
	by~\eqref{eq:def-alpha},~\eqref{eq:proximal1},~\eqref{eq:proximal2}, and
	the fact that
	\begin{equation}\label{eq:proximal3}
	|\dist_{T^0}(y^0_i,z^0_i) -
	\dist_{T^\#}(y^\#_i,z^\#_i)| \geq \quantum,
	\end{equation}
	by the evolutionary distance requirement of the battery.
	Hence,
	\begin{equation*} 
	\dcal^0 - \dcal^\#
	\geq \dcal_\delta k I.
	\end{equation*}
\end{proof}
\begin{proof}[Proof of Claim~\ref{claim:prop1-claim3} (Concentration)]
Consider $\hdcal$ on $T^0$. (The argument is
the same on $T^\#$.)
Let $\ical^0_{np}$ be the set of non-proximal
pairs in $T^0$ and
let $\ical^0_p$ be the set of pairs that are either
semi-proximal or proximal.
We also let
$$
\jcal^0_{np} = \left\{(i,j) : i \in \ical^0_{np}, j \in \{1,\ldots,k\}\right\},
$$
and
$$
\jcal^0_p = \left\{(i,j) : i \in \ical^0_p, j \in \{1,\ldots,k\}\right\}.
$$
Fix $0 < \eps < 1$ small (to be determined below).

We first illustrate our argument in the
easier case where the number of non-proximal pairs is small:
$$
|\ical^0_{np}| < \eps I.
$$
We define the following sum
\begin{eqnarray*}
	\tdcal = \sum_{(i,j) \in \jcal^0_p}
	\alpha_i\hs^j_{y^0_i}\hs^j_{z^0_i}
	- \left|\jcal^0_{np}\right|,
\end{eqnarray*}
that is, we set the terms in $\jcal^0_{np}$ to their
worst-case value, $-1$, which implies
$\hdcal \geq \tdcal$.
We claim that \textit{the remaining
terms in the sum are independent}. To prove this, 
we first make an observation. For all $(i,j) \in \jcal^0_p$,
the term $\hs^j_{y^0_i}\hs^j_{z^0_i}$
is independent of the state $\s^j_{x^0_i}$ at the
root $x^0_i$ of $\fcal^0_i$, where the latter is defined in~\eqref{eq:pair2}. This follows by the symmetry of the substitution process between the $+1$ and $-1$ states. To prove the independence claim above, we then proceed by generating the substitution process as follows, for each $j =1,\ldots,k$ independently. Define $\mathcal{X}$ to be the set of those roots $x^0_i$ such that $(i,j) \in \jcal^0_p$. 
\begin{enumerate}
	\item Let $\mathcal{H}$ be the set of those $i$ such that: (i) $(i,j) \in \jcal^0_p$ and (ii) the root $x^0_i$ does not have an ancestor in $T^0$ among $\mathcal{X}$. Pick the states $\s^j_{x^0_i}$, $i \in \mathcal{H}$, and the corresponding quantities $\hs^j_{y^0_i}\hs^j_{z^0_i}$, which depend only on the states within $\fcal^0_i$. By the Markov property and the condition on $\mathcal{H}$, the quantities $\hs^j_{y^0_i}\hs^j_{z^0_i}$, for $i \in \mathcal{H}$, are mutually independent. 
	
	\item Then let $\mathcal{H}'$ be the set of those $i \notin \mathcal{H}$ such that: (i) $(i,j) \in \jcal^0_p$ and (ii) the root $x^0_i$ does not have an ancestor in $\mathcal{X}-\{x^0_i\,:\, i\in \mathcal{H}\}$. Conditioned on the previously assigned states,
	pick the states $\s^j_{x^0_i}$, $i \in \mathcal{H}'$, and the corresponding quantities $\hs^j_{y^0_i}\hs^j_{z^0_i}$, which depend only on the states within $\fcal^0_i$. By the global requirement of the battery, the $\fcal^0_i$s in $\mathcal{H}'$ have empty edge intersection with the $\fcal^0_i$s in $\mathcal{H}$. Together with the observation above, it follows that the quantities $\hs^j_{y^0_i}\hs^j_{z^0_i}$, for $i \in \mathcal{H}'$, are independent of each other as well as of the quantities $\hs^j_{y^0_i}\hs^j_{z^0_i}$, for $i \in \mathcal{H}$.
	
	\item Add $\mathcal{H}'$ to $\mathcal{H}$, and proceed similarly to the previous step until all terms in $\jcal^0_p$ have been generated.
	
\end{enumerate}

The event
$$
A^c = \left\{
\hdcal - \frac{\dcal^0 + \dcal^\#}{2} \leq 0
\right\},
$$
implies the event
$$
\tdcal \leq \frac{\dcal^0 + \dcal^\#}{2},
$$
or, after rearranging,
\begin{equation}\label{eq:ac2}
\tdcal - \E_{T^0}[\tdcal]
\leq  - \frac{\dcal^0 - \dcal^\#}{2}
+ \{\dcal^0 - \E_{T^0}[\tdcal]\},
\end{equation}
which in turn implies
$$
\tdcal - \E_{T^0}[\tdcal]
\leq - \frac{\dcal_\delta}{2} k I
+ 2\eps k I,
$$
where we used the fact that the $\jcal^0_p$-terms
cancel out in the expression in curly brackets
in~\eqref{eq:ac2}.
Choose $\eps$ small enough
so that the RHS is less than $- \frac{\dcal_\delta}{3} k I$.
Then, by Lemma~\ref{lemma:azuma} (Appendix~\ref{section:appendix-3}),
\begin{eqnarray}
\P_{T^0}[A^c]
&\leq& 2 \exp\left(-\frac{\left(\frac{\dcal_\delta}{3} k I\right)^2}{2(2)^2 (kI)}\right)\nonumber\\
&=& \exp\left(- \Omega(k I)\right)\label{ac:2prime}.
\end{eqnarray}

\srevision{
		Consider now the case where
		$$
		|\ical^0_{np}| \geq \eps I.
		$$
		We show how to deal with the extra complication that non-proximal pairs have connecting
		paths that may intersect with other test subtrees, thereby creating unwanted dependencies.
}

		Let $((y,z);(Y,Z))$ be a non-proximal test pair in $T^0$
		and consider
		the $\gamma_t$-toppings $\lceil Y \rceil^{\gamma_t}$
		and $\lceil Z \rceil^{\gamma_t}$.
		Note that at least one of $\lceil Y \rceil^{\gamma_t}$
		and $\lceil Z \rceil^{\gamma_t}$
		has a hat of length $\gamma_t/2$ as otherwise
		$Y$ and $Z$ would be connected through the root at
		distance at most $\gamma_t$, contradicting
		the non-proximal assumption. We refer
		to the corresponding hat as {\it the} hat of the pair.
		If both toppings have long enough hats,
		choose the lowest one of the two so that the
		hat is necessarily part of the connecting path.
		The probability
		that all edges in this hat are open under the
		random cluster representation of the model
		\srevision{described in Lemma~\ref{lem:fk} (Appendix~\ref{section:appendix-2})},
		which we refer to as an {\it open hat},
		is at most $e^{-f \gamma_t/2}$.
		If at least one such edge is closed,
		in which case we say {\it the hat is  closed},
		$\hs_{y}$ and $\hs_{z}$ are independent.
		Hence, we have in the
		non-proximal case
		\begin{eqnarray}\label{eq:stretched}
		-\ln \E_T[\hs_{y}\hs_{z}] \geq
		-\ln \left\{(e^{-f\gamma_t/2})(1) + (1)(0)\right\}
		\geq f \gamma_t/2,
		\end{eqnarray}
		where the first term in curly brackets 
		accounts for the fact that, under an open hat, the correlation between the reconstructed states is at most $1$,
		while the second term accounts for the fact
		that, under a closed hat, the reconstructed 
		states are independent and therefore have 
		$0$ correlation.
		
		Let $[\jcal^0_{np}]_c$ be the random set
		corresponding to those pairs in
		$\jcal^0_{np}$ with a closed hat
		and
		let $[\jcal^0_{np}]_o$ be the random set
		corresponding to those pairs in
		$\jcal^0_{np}$ with an open hat.
		We consider the following sum
		\begin{eqnarray*}
			\tdcal = \sum_{(i,j) \in \jcal^0_p\cup [\jcal^0_{np}]_c}
			\alpha_i\hs^j_{y^0_i}\hs^j_{z^0_i}
			- \left|[\jcal^0_{np}]_o\right|,
		\end{eqnarray*}
		that is, we set the terms with open hats to their
		worst-case value $-1$ which implies
		$\hdcal \geq \tdcal$.
		We claim that the remaining
		terms in the sum are conditionally independent given
		$[\jcal^0_{np}]_c$.
		Indeed, considering only the
		proximal and semi-proximal test subtrees
		and their connecting paths as well
		as the non-proximal test subtrees whose hat
		is closed, we claim that the roots of the former and the
		hats of the latter form a separating set in the sense
		that any path
		between two of these subtrees must
		go through one of the roots or an entire hat.
		Indeed,
		a path between any two of these test subtrees
		must enter
		one of them from above. Moreover if one of the
		two subtrees is non-proximal but the path
		does not visit its entire hat,
		then the other test subtree must
		also be entered from above (as the path
		must deviate downwards from the hat) and its entire hat
		be visited if non-proximal (otherwise the two
		hats would intersect). Arguing as in the small number of non-proximal pairs, this implies mutual
		independence.

		By the argument above~\eqref{eq:stretched},
		\begin{equation}\label{eq:large-expec}
		\E_{T^0}[|[\jcal^0_{np}]_c|] \geq (1 - e^{-f \gamma_t/2})
		|\jcal^0_{np}|.
		\end{equation}
		Hence, by Lemma~\ref{lemma:azuma} (Appendix~\ref{section:appendix-3}), letting
		$0 < \eps' < 1 - e^{-f\gamma_t/2}$ (determined below)
		\begin{eqnarray}
		\P_{T^0}\left[|[\jcal^0_{np}]_c| <
		\E_{T^0}[|[\jcal^0_{np}]_c|] - \eps'|\jcal^0_{np}|\right]
		&\leq& 2 \exp\left(
		- \frac{(\eps' |\jcal^0_{np}|)^2}
		{2 (1)^2 |\jcal^0_{np}|}
		\right)\nonumber\\
		&=& \exp\left(- \Omega((\eps')^2|\jcal^0_{np}|)\right)\nonumber\\
		&=& \exp\left(- \Omega((\eps')^2 \eps k I)\right).\label{eq:large-inp-aux1}
		\end{eqnarray}
		On the event
		$$
		\ccal = \{|[\jcal^0_{np}]_c| \geq
		\E_{T^0}[|[\jcal^0_{np}]_c|]
		- \eps' |\jcal^0_{np}|\},
		$$		
		we have, using~\eqref{eq:large-expec},
		\begin{eqnarray}
		\left|[\jcal^0_{np}]_o\right|
		&=& \left|\jcal^0_{np}\right| - \left|[\jcal^0_{np}]_c\right|\nonumber\\
		&\leq& \left|\jcal^0_{np}\right| - \E_{T^0}[|[\jcal^0_{np}]_c|]
		+ \eps' |\jcal^0_{np}|\nonumber\\
		&\leq& (e^{-f \gamma_t/2} + \eps') |\jcal^0_{np}|\nonumber\\
		&\leq& (e^{-f \gamma_t/2} + \eps') kI\nonumber\\
		&\leq& - \frac{\dcal_\delta}{6} k I,\label{eq:large-inp-aux2}
		\end{eqnarray}		
		for $\eps'$ small enough
		and $\gamma_t$ large enough.
		\srevision{
		The rest of the argument follows similarly 
		to the small $|\ical^0_{np}|$ case
		by bounding
		\begin{eqnarray*}
			\P_{T^0}[A^c]
			&=& \P_{T^0}[A^c\,|\,\ccal^c] \P_{T^0}[\ccal^c]
			+ \P_{T^0}[A^c\,|\,\ccal] \P_{T^0}[\ccal] 
			\leq \P_{T^0}[\ccal^c] +
			\E_{T^0}[\P_{T^0}[A^c | | [\jcal^0_{np}]_c, \ccal ] | \ccal], 
		\end{eqnarray*}	
		using~\eqref{eq:large-inp-aux1}
		for the first term, and~\eqref{eq:large-inp-aux2}
		along with Claim~\ref{claim:prop1-claim2}
		for the second one.			
		%
	}
\end{proof}

\section{Homogeneous trees}
\label{sec:homo}
\label{sec:high-level-proof-hmg}

We first detail our techniques for constructing
batteries of tests on a special case:
homogeneous trees.
Formally, we define homogeneous phylogenies as follows.
\begin{definition}[Homogeneous phylogenies]\label{ex:homo}
	For an integer $h \geq 0$ and $n = 2^h$,
	we denote by
	$\hmgphy^{(h)}_{g}$
	the subset of
	$\phy^{(n)}_{f,g}[\quantum]$
	comprised of all
	$h$-level
	complete binary trees
	$\hmgt{h}_{\phi,\weight} = (\hmgv{h}, \hmge{h}; \phi; \weight)$
	where the edge weight function
	$\weight$ is identically $g$
	and $\phi$ may be any one-to-one labeling
	of the leaves.
	We denote by $\hmgrt{h}$ the natural root of
	$\hmgt{h}_{\phi,\weight}$.
	For $0\leq h'\leq h$, we let $\hmgll{h}{h'}$ be the
	vertices on level $h - h'$ (from the root). In particular,
	$\hmgll{h}{0} = \hmgl{h}$ denotes the leaves
	of the tree and $\hmgll{h}{h} = \{\hmgrt{h}\}$
	denotes the root.
\end{definition}
\noindent Fix $g, n = 2^h$
and let $\hmgphy = \hmgphy^{(h)}_{g}$
be the set of homogeneous phylogenies
with $h$ levels and branch lengths $g$.
Let $T^0 \in \hmgphy$
be the generating phylogeny
and denote by $\boldsymbol{\s}_X = (\s^i_X)_{i=1}^k$
a set of $k$ i.i.d.~samples from the corresponding
CFN model.
We first need an appropriate notion
of distance between homogeneous trees.
Note that tree
operations routinely used in phylogenetics, 
such as subtree-prune-regraft or
nearest-neighbor interchange (see e.g.~\cite{SempleSteel:03}),
may not result in homogeneous trees.
It will be more convenient to work with the following
definition. We say that two homogeneous trees
$T$ and $T'$ are equivalent, denoted by $T\sim T'$,
if $\forall a,b \in [n], \dist_T(a,b) = \dist_{T'}(a,b)$,
that is, if they agree as tree metrics.
(Recall that tree metrics are defined in Definition~\ref{def:metric}.)
\begin{definition}[Swap distance]
\label{def:swap}
We call a {\em swap} the operation
of choosing two (non-sibling) vertices $u$ and $v$
on the same level of a homogeneous tree
and exchanging the subtrees rooted at $u$
and $v$.
The {\em swap distance}  $\swap(T,T')$
between $T$ and $T'$ in $\hmgphy$
is the smallest number of
swaps needed to transform $T$ into $T'$
(up to $\sim$).
\end{definition}
\noindent Because a swap operation is invertible,
we have $\swap(T',T) = \swap(T,T')$.
By simply re-ordering the leaves, it holds
that $\swap(T,T') \leq n-1$.
We need a bound on the size of the neighborhood
around a tree.
Since for each swap operation, we choose one of
$2n-2$ vertices, then choose one of at most $n-2$
non-sibling vertices on the same level, we have:
\begin{claim}[Neighborhod size: Swap distance]
\label{claim:neighborhood-size-hmg}
Let $T$ be a phylogeny in $\hmgphy$.
The number of phylogenies at swap distance
$\Delta$ of $T$ is at most $(2n^{2})^\Delta$.
\end{claim}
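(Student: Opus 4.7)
The plan is a straightforward counting argument based on the structure of a single swap operation. Each phylogeny $T'$ at swap distance $\Delta$ from $T$ is, by Definition~\ref{def:swap}, reachable from $T$ by some sequence of $\Delta$ swaps. So it suffices to upper bound the number of distinct sequences of $\Delta$ swaps applied to $T$, since this quantity dominates the number of distinct endpoints.

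To bound the number of swaps available at each step, I would count as follows. A swap is specified by choosing an ordered pair $(u,v)$ of distinct, non-sibling vertices lying on the same level of the current homogeneous tree. For the first vertex $u$, there are at most $2n-2$ choices (all non-root vertices; swapping with the root is vacuous). Given $u$, the second vertex $v$ must be on the same level as $u$ and cannot equal $u$ or its sibling, so there are at most $n-2$ choices (since any level of a homogeneous tree on $n$ leaves contains at most $n$ vertices). Thus the total number of ordered pairs, and hence an upper bound on the number of distinct swaps that can be applied at a given step, is at most
\begin{equation*}
(2n-2)(n-2) \le 2n^2.
\end{equation*}

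Composing over $\Delta$ consecutive swap steps, the number of distinct length-$\Delta$ swap sequences starting from $T$ is at most $(2n^2)^\Delta$. Since every phylogeny at swap distance $\Delta$ is obtained by at least one such sequence, the number of phylogenies at swap distance exactly $\Delta$ from $T$ is also at most $(2n^2)^\Delta$, establishing the claim.

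There is no real obstacle here: the only subtlety is making sure the per-step count is a valid upper bound regardless of where in the sequence we are, which is automatic since the intermediate trees are themselves in $\hmgphy$ and hence have the same level structure as $T$. The use of ordered (rather than unordered) pairs is a slight overcounting, but yields the cleaner bound $2n^2$ stated in the claim.
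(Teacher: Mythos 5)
Your proof is correct and follows essentially the same counting argument the paper uses: at each step, choose one of $2n-2$ non-root vertices and then one of at most $n-2$ non-sibling vertices on the same level, giving $(2n-2)(n-2) \le 2n^2$ swaps per step, and compose over $\Delta$ steps. The paper states this in a single sentence before the claim; your write-up just spells it out more carefully.
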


In the following subsections, we prove
the existence of a sufficiently large
battery of distinguishing
tests.
\begin{proposition}[Existence of batteries]
\label{prop:existence-batteries-hmg}
Let $\wp = 1$, $\ell = \ell(g,\wp) \geq 2$ as
in Proposition~\ref{prop:distinguishing},
$
\Gamma = 2 \ell,
$
and
$\gamma_t \geq \Gamma$ and $C$
as
in Proposition~\ref{prop:distinguishing}.
For all $T^\# \neq T^0 \in \hmgphy$,
there exists a
$(\ell,\wp,\Gamma,\gamma_t,I)$-battery
$$
\{((y^0_i,z^0_i);(Y^0_i,Z^0_i))\}_{i=1}^I
\text{ (in $T^0$) and }
\{((y^\#_i,z^\#_i);(Y^\#_i,Z^\#_i))\}_{i=1}^I
\text{ (in $T^\#$)},
$$
with
$$
I \geq  \frac{\swap(T^0,T^\#)}{
C_\scal (1 + 2^{2\gamma_t + 2})
},
$$
where
$C_\scal > 0$ is a constant (defined in Claim~\ref{claim:swaps-r}
below).
\end{proposition}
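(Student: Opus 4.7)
The plan is to build the battery by first identifying a large collection of matching dense subtrees shared between $T^0$ and $T^\#$, then pairing them into test pairs whose root distances must differ (because $T^0 \not\sim T^\#$), and finally sparsifying the collection so that the resulting $\fcal$-forests are pairwise edge-disjoint. The density parameter $\wp = 1$ gives the flexibility we need: whenever two matching subtrees meet at a level, we can afford to drop one sibling per level without violating the $(\ell,\wp)$-density requirement, and this is exactly what allows a recursive construction to proceed past partial mismatches.

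First I would construct the matching subtrees recursively from the leaves up, level by level, in the spirit of a standard agglomerative procedure (this is where Claim~\ref{claim:cohanging-hmg} enters). At level $0$ all singleton leaves trivially match. Given matching subtrees on level $h'$, for each vertex $v$ of $T^0$ on level $h'+1$, inspect its two children's matching subtrees; if the corresponding leaf sets also sit under a common ancestor on level $h'+1$ in $T^\#$ (allowing at most one dropped sibling, thanks to $\wp=1$), promote them to a larger matching subtree at level $h'+1$. Otherwise, freeze them as a completed matching cluster. Because the edge weights are identically $g$ and the labellings are the only degrees of freedom, ``matching'' in Definition~\ref{def:matching} reduces to sharing the same set of leaf labels with the same internal topology, so the induction is purely combinatorial.

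Next I would convert mismatches into test pairs. Every time two matching subtrees $Y^0, Z^0$ are sibling clusters under a common vertex $y^0$ in $T^0$ but the corresponding $Y^\#, Z^\#$ in $T^\#$ fail to be siblings there, the evolutionary distance between their roots differs by at least $\quantum$ (in fact by at least $2g$, since one side places them at graph distance $2$ and the other forces at least $4$), so requirement 2(d) of Definition~\ref{def:battery} holds. Requirement 2(a) holds by construction, 2(b) holds because the matching subtrees were built as disjoint clusters below a common vertex, and 2(c) holds with $\Gamma = 2\ell$ because the proximal side is at graph distance $2$. To count these test pairs in terms of $\swap(T^0,T^\#)$, I would argue that a single swap can destroy only boundedly many matchings at each level, so any sequence of swaps realizing $\swap(T^0,T^\#)$ induces at most a constant multiple of $\swap(T^0,T^\#)$ broken matching junctions; conversely, if no such junction existed, $T^0$ and $T^\#$ would agree as tree metrics, contradicting $T^0\ne T^\#$. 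This is the content of Claim~\ref{claim:swaps-r} and gives the $C_\scal$ factor.

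Finally, to meet the global intersection requirement 3(a), I would apply a sparsification step (Claim~\ref{claim:sparsification-hmg}). The only way two $\fcal^0_i$'s (or $\fcal^\#_i$'s) can overlap is through the hats of non-proximal pairs, each of which contributes at most $\gamma_t$ edges above its two roots, and each such edge can lie above at most $2^{2\gamma_t}$ other test subtrees that share the same ancestor at graph distance $\gamma_t$; a greedy selection that keeps one test panel and discards all panels whose $\fcal$ touches it loses at most a factor $1 + 2^{2\gamma_t+2}$. Dividing the count from the previous step by $C_\scal(1+2^{2\gamma_t+2})$ yields the claimed lower bound on $I$. The main obstacle I anticipate is the quantitative version of the ``each swap kills boundedly many matchings'' step: one must argue that, across the entire recursive construction, the total deficit in surviving matching pairs is controlled linearly by the swap distance and not, for instance, amplified through cascading mismatches up the levels. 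Using the $\wp = 1$ slack to absorb a single level's damage and inducting on level should close this gap.
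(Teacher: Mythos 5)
Your high-level strategy — greedily grow matching subtrees bottom-up, harvest a test pair at each mismatch junction, then sparsify — is the same as the paper's, and the sparsification step is essentially correct (the greedy selection loses a factor $1+2^{2\gamma_t+2}$). But there is a genuine gap in the step that relates the number of test pairs to the swap distance, which is the quantitative heart of the proposition.

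Your counting argument is stated in the wrong direction. You propose to show that ``any sequence of swaps realizing $\swap(T^0,T^\#)$ induces at most a constant multiple of $\swap(T^0,T^\#)$ broken matching junctions,'' i.e.\ $\#\r \le C\cdot\swap(T^0,T^\#)$, and even your self-diagnosed obstacle (``the total deficit in surviving matching pairs is controlled linearly by the swap distance'') is an upper bound on $\#\r$. But to prove the proposition you need a \emph{lower} bound: $\#\r \ge \swap(T^0,T^\#)/C_\scal$. The ``conversely'' clause (if no junction existed the trees would agree) only gives $\#\r\ge 1$, which is far too weak. An upper bound on the number of broken junctions is useless here; with it you cannot rule out the possibility that a tree at large swap distance has only a constant number of $\r$-vertices, hence only a constant-size battery. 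The paper closes this in Claim~\ref{claim:swaps-r} by running the implication the other way: starting from the coloring, one exhibits an explicit sequence of at most $2^{\ell+1}(\#\r + \#\y)\le 2^{\ell+2}\#\r$ swaps turning $T^\#$ into $T^0$ (fix the lowest non-$\g$-vertex, which by construction has all $\g$-children, with at most $2^{\ell+1}$ swaps; repeat), so $\swap(T^0,T^\#)\le C_\scal\#\r$ and the cascading concern you flagged never arises. You would need to replace your ``swaps destroy matchings'' argument with this ``fix a non-matching vertex with few swaps'' argument.

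Two smaller imprecisions: (i) the test roots are $\ell$-children of a common $\r$-vertex and hence sit at graph distance up to $2\ell$ in $T^0$ (this is why $\Gamma=2\ell$), not at graph distance $2$, so your parenthetical ``one side places them at graph distance $2$ and the other forces at least $4$'' is not the right quantitative picture --- the correct statement is just that the evolutionary distances in $T^0$ and $T^\#$ differ by at least $\quantum$, which the discretization and the non-matching condition together guarantee; (ii) you also need $\#\y\le\#\r$ (the paper's Claim~\ref{claim:bounding-yellow-hmg}, via a flow-counting argument on the forest of $\y$-vertices) to get from ``at most $2^{\ell+1}$ swaps per non-$\g$-vertex'' to a bound in terms of $\#\r$ alone; your sketch does not account for the $\y$-vertices at all.
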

\noindent The formal proof of this
proposition can be found in Section~\ref{section:hmg-app-battery}.

From 
Propositions~\ref{prop:distinguishing}
and~\ref{prop:existence-batteries-hmg} as well as Claim~\ref{claim:neighborhood-size-hmg}, we obtain our main theorem in this special case.
\begin{theorem}[Sequence-length requirement of ML: Homogeneous trees]
\label{thm:main-hmg}
For all $\delta > 0$, there exists $\kappa > 0$
depending on $\delta$ and $g$
such that the following holds.
For all $h \geq 2$, $n = 2^h$ and generating phylogeny
$T^0 \in \hmgphy^{(h)}_g$,
if $\boldsymbol{\s}_X = (\s^i_X)_{i=1}^k$ is
a set of $k = \kappa \log n$ i.i.d.~samples from the corresponding
CFN model,
then
the probability that MLE
fails to return $T^0$ is at most $\delta$.
\end{theorem}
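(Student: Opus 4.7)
The plan is to combine Propositions~\ref{prop:distinguishing} and~\ref{prop:existence-batteries-hmg} with the neighborhood-size bound in Claim~\ref{claim:neighborhood-size-hmg} in a union bound over all incorrect trees, stratified by their swap distance to the generating tree $T^0$. First I would observe that, by the hypothesis-testing inequality~\eqref{eq:hypothesis-testing} applied to any incorrect tree $T^\# \neq T^0$, the event $M_{T^\#}$ that ML prefers $T^\#$ over $T^0$ satisfies
\begin{equation*}
\P_{T^0}[M_{T^\#}] \leq \P_{T^0}[A^c] + \P_{T^\#}[A],
\end{equation*}
for any measurable test $A$. I would take $A$ to be the distinguishing event associated with a battery provided by Proposition~\ref{prop:existence-batteries-hmg}, with $\wp = 1$, $\ell$, $\Gamma$, $\gamma_t$ and $C$ chosen as in that proposition.

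Next, set $\Delta = \swap(T^\#, T^0) \geq 1$ and let $C_1 = C_{\scal}(1 + 2^{2\gamma_t + 2})$, so that Proposition~\ref{prop:existence-batteries-hmg} produces a battery of size $I \geq \Delta/C_1$. Proposition~\ref{prop:distinguishing} then yields
\begin{equation*}
\P_{T^0}[M_{T^\#}] \leq 2 \exp(-C k I) \leq 2 \exp\!\left(-\frac{C}{C_1}\, k\,\Delta\right) =: 2 \exp(-C_2 k \Delta),
\end{equation*}
where $C_2 = C/C_1$ depends only on $g$ (through $\ell$, $\Gamma$, $\gamma_t$, $\invquantum$, $\wp$). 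Now I would perform the union bound. By Claim~\ref{claim:neighborhood-size-hmg}, for each $\Delta \geq 1$ the number of trees $T^\# \in \hmgphy$ at swap distance exactly $\Delta$ from $T^0$ is at most $(2n^2)^\Delta$. Hence
\begin{equation*}
\P_{T^0}[\psiml \neq T^0] \leq \sum_{\Delta \geq 1} (2n^2)^\Delta \cdot 2 \exp(-C_2 k \Delta) = 2 \sum_{\Delta \geq 1} \exp\!\bigl(\Delta\,[\ln 2 + 2\ln n - C_2 k]\bigr).
\end{equation*}

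Choosing $k = \kappa \log n$ with $\kappa$ large enough (depending on $g$ and $\delta$) so that $C_2 \kappa \log n \geq 3 \ln n$ for all $n \geq 2$, the bracketed exponent is at most $-\ln n$, so the geometric series is bounded by $2/(n-1) \leq \delta$ for $n$ large, and by enlarging $\kappa$ further we absorb the small-$n$ cases. This yields the theorem. I do not expect any substantive obstacle in this final assembly: all the difficulty has been pushed into the existence of batteries (Proposition~\ref{prop:existence-batteries-hmg}) and their distinguishing power (Proposition~\ref{prop:distinguishing}), and the only thing to verify here is that the exponential decay rate in $\Delta$ provided by the battery beats the $(2n^2)^\Delta$ entropy of the neighborhood at swap distance $\Delta$—which is precisely why a logarithmic sequence length suffices.
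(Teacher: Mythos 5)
Your proof is correct and follows essentially the same route as the paper's: combine Propositions~\ref{prop:distinguishing} and~\ref{prop:existence-batteries-hmg} via the hypothesis-testing inequality~\eqref{eq:hypothesis-testing}, then do a union bound over incorrect trees stratified by swap distance using Claim~\ref{claim:neighborhood-size-hmg}. The only cosmetic difference is that the paper arranges the constants so that the bound $\leq \delta$ holds directly for all $n \geq 2$, whereas you first argue for large $n$ and then enlarge $\kappa$ to absorb small $n$ — an equivalent bookkeeping choice.
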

\begin{proof}[Proof of Theorem~\ref{thm:main-hmg}]
	For $T^\# \neq T^0$ in $\hmgphy^{(h)}_g$,
	let $M_{T^\#}$ be the event that the {\it MLE} prefers $T^\#$
	over $T^0$ (including a tie),
	that is, the set of $\boldsymbol{\s}_X = (\s^{i}_X)_{i=1}^k$
	such that
	$
	\L_{T^\#}(\boldsymbol{\s}_X)
	\leq \L_{T^0}(\boldsymbol{\s}_X)
	$.
	Combining Propositions~\ref{prop:distinguishing}
	and~\ref{prop:existence-batteries-hmg},
	for all $T^\# \neq T^0 \in \hmgphy^{(h)}_g$,
	there exists an event $A_{T^\#}$
	such that
	\begin{equation}\label{eq:distinguishing-event-hmg}
		\max\{\P_{T^\#}[A_{T^\#}^c], \P_{T^0}[A_{T^\#}]\}
		\leq e^{-C_1 k \swap(T^0,T^\#)},
	\end{equation}
	where $C_1$ depends only on $g$.
	Then, by a union bound,~\eqref{eq:hypothesis-testing},~\eqref{eq:distinguishing-event-hmg},
	and Claim~\ref{claim:neighborhood-size-hmg},
	\begin{eqnarray*}
		\P_{T^0}[\exists T^\# \neq T^0,\ M_{T^\#}]
		&\leq& \sum_{T^\# \neq T^0}\P_{T^0}[M_{T^\#}]\nonumber\\
		&\leq&  \sum_{T^\# \neq T^0} [\P_{T^0}[A_{T^\#}] + \P_{T^\#}[A_{T^\#}^c]]\\
		&\leq& \sum_{\Delta = 1}^{n-2}
		(2 n^{2})^\Delta [2 e^{-C_1 k \Delta}]\\
		&\leq& \sum_{\Delta = 1}^{+\infty}
		e^{-(C_1 \kappa \log n - 2 \log n - \log 2)\Delta}\\
		&\leq& \frac{e^{-(C_1 \kappa \log n - 2 \log n  - \log 2)}}
		{1 - e^{-(C_1 \kappa \log n - 2 \log n  - \log 2)}}\\
		&\leq&  \frac{1}
		{e^{(C_1 \kappa - 2)\log n  - \log 2} - 1}\\
		&\leq& \delta,		
	\end{eqnarray*}
	for $\kappa$ large enough, depending only on
	$g$ and $\delta$,
	for all $n \geq 2$.
\end{proof}

\subsection{Finding matching subtrees}
\label{sec:finding-matching-hmg}

We now describe a procedure to construct
a battery of distinguishing tests on homogeneous trees.
To be clear,
the procedure is not carried out on
data. It takes as input the true (unknown) generating phylogeny
$T^0$ and an alternative tree $T^\# \neq T^0$.
It merely serves
to prove the existence
of a distinguishing statistic that, in turn,
implies a bound on the failure probability
of maximum likelihood as detailed in
the proof of Theorem~\ref{thm:main-hmg}.
In essence, the procedure attempts to build
maximal matching subtrees between $T^0$ and
$T^\#$ and pair them up appropriately to construct
distinguishing tests. \begin{definition}[$\ell$-vertices]
For a fixed positive integer $\ell$,
we call {\em $\ell$-vertices}
those vertices in $T^0$ whose graph distance from the root
is a multiple of $\ell$.
The sets of all $\ell$-vertices at the same distance
from the root are called {\em $\ell$-levels}.
For an $\ell$-vertex $x$, its descendant $\ell$-vertices
on the next $\ell$-level (that is, farther from the root)
are called the {\em $\ell$-children of $x$},
which we also refer to as a family of $\ell$-siblings.
\end{definition}
\noindent Let $\wp = 1$ and $\ell = \ell(g,\wp) \geq 2$
be as in Proposition~\ref{prop:distinguishing}.
Assume for simplicity that the total number of levels
$h$ in $T^0$ is a multiple of $\ell$. Extending the analysis
to general $h$ is straigthforward.

\paragraph{Procedure}
Our goal
is to color each
$\ell$-vertex $x$ of $T^0$ with the following
intended meaning:
\begin{itemize}
\item Green $\g$: indicating a matching subtree
rooted at $x$
that can be used to reconstruct ancestral states
reliably on both $T^0$ and $T^\#$ using the same function
of the leaf states.

\item Red $\r$: indicating the presence among
the $\ell$-children of $x$
of a pair of matching subtrees
that can be used in a distinguishing test
as their pairwise distance differs in $T^0$
and $T^\#$.

\item Yellow $\y$: none of the above.
\end{itemize}
We call $\g$-vertices (respectively $\g$-children)
those $\ell$-vertices (respectively $\ell$-children)
that are colored $\g$, and similarly for the
other colors.
Before describing the coloring procedure in
details, we need a definition.
\begin{definition}[$\g$-cluster]
Let
$x$ be a $\g$-vertex.
Assume that each $\ell$-vertex below $x$ in $T^0$
has been colored $\g$, $\r$,
or $\y$ and that the
leaves have been colored $\g$. The {\em $\g$-cluster}
rooted at $x$ is the restricted subtree of
$T^0$ containing
all vertices and edges (not necessarily
$\ell$-vertices) lying
on a path between $x$
and a leaf below $x$ that traverses only $\ell$-vertices
colored $\g$.
\end{definition}
We now describe the coloring procedure.
\begin{enumerate}
\item Initialization
\begin{enumerate}

\item  All leaves of $T^0$ are colored $\g$.

\end{enumerate}

\item For each $\ell$-vertex $x$ in the
$\ell$-level furthest from the
root that has yet to be colored, do:
\begin{enumerate}
\item Vertex $x$ is colored  $\g$ if:
\begin{itemize}
\item at most one
of its $\ell$-children is non-$\g$ and;
\item
the resulting $\g$-cluster rooted at $x$
and the corresponding restricted
subtree in $T^\#$, that is, the subtree of $T^\#$
restricted to the same leaf set, are matching.
\end{itemize}

\item Else, vertex $x$ is colored  $\r$
if:
\begin{itemize}
\item at most one
of its $\ell$-children is non-$\g$;
\item
but, {\bf if} $x$ were colored $\g$,
the resulting $\g$-cluster rooted at $x$
and the corresponding restricted
subtree in $T^\#$ would {\bf not} be matching.
\end{itemize}

\item Else, vertex $x$ is colored $\y$. 
\end{enumerate}

\end{enumerate}
In particular observe that, if $x$ is colored
$\y$,
at least two of its $\ell$-children are non-$\g$.

As explained above, we are interested
in $\r$-vertices because tests can be constructed from them.
We prove that the number of $\r$-vertices
scales linearly in the swap distance. More precisely, we show that
$$
\#\r \geq 2^{-\ell - 2}\, \swap\left(T^0,T^\#\right) .
$$

\paragraph{Relating combinatorial distance and
	the number of matching subtrees}
We relate the swap distance between $T^0$ and
$T^\#$ to the number of $\r$-vertices
in the procedure above.
Let $\#\g$ be the number of $\g$-vertices
in $T^0$ in the construction,
and similarly for the other colors.
For an $\ell$-vertex $x$ in $T^0$,
we let $T^0_x$ be the subtree of $T^0$
rooted at $x$ and we let $\vcal_\ell(T^0_x)$
be the set of $\ell$-vertices in $T^0_x$.
Recall from Definition~\ref{def:metric} that
we denote by $\gdist_{T^0}$ the graph
distance on $T^0$. We first bound the
number of $\y$-vertices.
\begin{claim}[Bounding the number of yellow vertices]
	\label{claim:bounding-yellow-hmg}
	We have
	$$
	\#\y \leq \#\r.
	$$
\end{claim}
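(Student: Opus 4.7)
My approach is a simple double-counting argument based directly on the definition of the coloring. The key observation is that the $\y$-rule, as stated in Step 2(c) of the procedure, forces every $\y$-vertex to have \emph{at least two} non-$\g$ $\ell$-children: if only $0$ or $1$ were non-$\g$ then $x$ would have qualified for either rule 2(a) or rule 2(b) and been colored $\g$ or $\r$. On the other hand, every non-$\g$ $\ell$-vertex has at most one $\ell$-parent in $T^0$, so it can be counted as an $\ell$-child of at most one $\y$-vertex.

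Concretely, consider the set of pairs
\[
\pcal = \{(x,c) : x \text{ is a } \y\text{-vertex}, \ c \text{ is a non-}\g \ \ell\text{-child of } x\}.
\]
Summing over $\y$-vertices and using the observation above gives the lower bound
\[
|\pcal| \;\geq\; 2 \, \#\y.
\]
Summing over non-$\g$ $\ell$-children $c$ and using that each $c$ has a unique $\ell$-parent gives the upper bound
\[
|\pcal| \;\leq\; \#\{\text{non-}\g \ \ell\text{-vertices}\} \;=\; \#\r + \#\y.
\]
Combining the two inequalities yields $2\#\y \leq \#\r + \#\y$, and subtracting $\#\y$ from both sides gives $\#\y \leq \#\r$, as claimed.

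There is essentially no obstacle here; the only point to verify carefully is the defining property of $\y$-vertices, namely that having at least two non-$\g$ $\ell$-children is forced (as remarked right after the definition of the procedure in the paper). Once this is in hand, the double counting is immediate and requires no further structural information about matching subtrees, $\g$-clusters, or swap distance; those ingredients will enter only in the subsequent claim that bounds $\#\r$ from below in terms of $\swap(T^0,T^\#)$.
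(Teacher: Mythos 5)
Your proof is correct, and it takes a genuinely different (and simpler) route than the paper's. The paper establishes the same inequality by a weighted double-counting argument: it assigns to each pair $(x,y)$ of $\ell$-vertices the weight $2^{-\gdist_{T^0}(x,y)/\ell}$, proves by a downward induction over $\ell$-levels that the total weight flowing from any $\y$-vertex to the $\r$-vertices below it is at least $1$, proves conversely that the total weight flowing into any $\r$-vertex from the $\y$-vertices above it is strictly less than $1$ (a geometric series along the root path), and then compares the two double sums. Your argument discards the geometric weights entirely and instead counts, without weights, the incidence pairs $(x,c)$ with $x$ a $\y$-vertex and $c$ a non-$\g$ $\ell$-child; the two bounds $|\pcal|\ge 2\#\y$ (each $\y$-vertex branches at least twice into non-$\g$) and $|\pcal|\le \#\y+\#\r$ (each non-$\g$ $\ell$-vertex has a unique $\ell$-parent) then give the claim in one line. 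Your version is more elementary, does not need the finiteness/termination bookkeeping that the inductive flow argument implicitly relies on, and isolates exactly the structural fact being used: $\y$-vertices branch at least two-to-one into non-$\g$ children while parenthood is one-to-one. Both proofs are local to the coloring procedure and neither needs anything about matching subtrees or swap distance, as you note. The only thing the paper's weighted version buys that yours does not is the marginally sharper conclusion $\#\y < \#\r$ whenever $\#\y \ge 1$, which is not needed anywhere downstream.
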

\begin{proof}
	From our construction, each $\y$-vertex in $T^0$ has at least two non-$\g$-children.
	Hence, intuitively,
	one can think of the $\y$-vertices as forming the internal vertices of
	a forest of multifurcating trees whose leaves are $\r$-vertices.
	The inequality follows.
	
	Formally, if $x$ is a $\y$-vertex, from the observation
	above we have
	\begin{equation}\label{eq:bounding-yellow-1-hmg}
	\sum_{y \in \vcal_\ell(T^0_x)}
	2^{-\frac{\gdist_{T^0}(x,y)}{\ell}} \ind\{\text{$y$ is a $\r$-vertex}
	\} \geq 1,
	\end{equation}
	by induction on the $\ell$-levels starting with
	the level farthest away from the root. Similarly
	if $y$ is an $\r$-vertex,
	we have
	\begin{equation}\label{eq:bounding-yellow-2-hmg}
	\sum_{x : y \in \vcal_\ell(T^0_x)}
	2^{-\frac{\gdist_{T^0}(x,y)}{\ell}} \ind\{\text{$x$ is a $\y$-vertex}\} < 1,
	\end{equation}
	where the inequality follows from the fact that
	the sum is over a path from $x$ to the root of $T^0$.
	Summing~\eqref{eq:bounding-yellow-1-hmg} over
	$\y$-vertices $x$ and~\eqref{eq:bounding-yellow-2-hmg} over
	$\r$-vertices $y$ gives the same
	quantity on the LHS, so that the RHS gives the inequality. 
\end{proof}
\noindent We can now relate the swap
distance to the output of the procedure.
\begin{claim}[Relating swaps and $\#\r$]
	\label{claim:swaps-r}
	We have
	$$
	\swap\left(T^0,T^\#\right) \leq C_\scal \#\r,
	$$
	where $C_\scal = 2^{\ell + 2}$.
\end{claim}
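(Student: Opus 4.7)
My plan is to construct an explicit sequence of same-level swaps transforming $T^0$ into a tree equivalent to $T^\#$ and to upper bound its length by $2^{\ell+2}\,\#\r$. I would proceed by induction up the $\ell$-levels of $T^0$, starting from the leaves. The inductive invariant is that, after processing all $\ell$-vertices at graph distance at least $i\ell$ from the root, every such $\ell$-vertex $x$ roots a subtree of the current version of $T^0$ whose tree metric matches the restricted subtree of $T^\#$ spanned by the leaves below $x$.

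A key preliminary observation, made available by the homogeneity of $T^0$, is that the portion of the tree strictly between two consecutive $\ell$-levels is a disjoint union of complete binary trees of depth $\ell$ with identical edge weights $g$. Combined with the fact that tree metrics are insensitive to the ordering of siblings, this implies that once the $\ell$-children of an $\ell$-vertex $x$ each root a matching subtree, the tree metric on the leaves below $x$ depends only on the assignment of $\ell$-children to the $2^\ell$ slots below $x$, modulo the permutations that swap siblings. Hence the local correction at $x$ amounts to permuting these $\ell$-children into the correct slot assignment.

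For a green $x$, no work is needed: by definition of a green cluster together with the inductive hypothesis, the subtree at $x$ already matches $T^\#$ on the leaves below it. For a red or yellow $x$, the inductive hypothesis ensures that each of its $2^\ell$ $\ell$-children already roots a matching subtree, so the remaining discrepancy with $T^\#$ lies solely in which $\ell$-child sits in which slot below $x$. Any permutation of $2^\ell$ elements can be realized by at most $2^\ell - 1$ transpositions, and each such transposition can be implemented by a single non-sibling same-level swap (sibling transpositions never need to be performed, since the tree metric is invariant under them). Summing the per-vertex costs and invoking Claim~\ref{claim:bounding-yellow-hmg} yields
\begin{equation*}
\swap(T^0, T^\#) \;\leq\; (2^\ell - 1)\bigl(\#\r + \#\y\bigr) \;\leq\; 2(2^\ell - 1)\,\#\r \;<\; 2^{\ell+2}\,\#\r,
\end{equation*}
which gives the claim with $C_\scal = 2^{\ell+2}$.

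The main obstacle will be establishing the preliminary reduction rigorously: one must verify that, in a homogeneous tree, the only relevant data for matching the subtree rooted at an $\ell$-vertex is the unordered-by-siblings assignment of $\ell$-children to slots, and that transpositions of non-sibling same-level vertices suffice to realize any such assignment. A second, more routine, concern is verifying that the swaps carried out while processing $x$ do not disturb subtrees already fixed at deeper levels; this follows because each such swap exchanges two subtrees that lie entirely inside the subtree rooted at $x$, and therefore preserves the tree metric on every leaf set contained in a previously-handled subtree.
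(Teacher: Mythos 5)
Your plan has a genuine gap at the central step, specifically in the assertion that \emph{``once the $\ell$-children of an $\ell$-vertex $x$ each root a matching subtree, \dots\ the remaining discrepancy with $T^\#$ lies solely in which $\ell$-child sits in which slot below $x$.''} This is not correct for a non-$\g$ vertex $x$. Permuting the $\ell$-children of $x$ inside $x$'s subtree does not change which leaves lie below $x$; it only rearranges them. The tree metric that $x$'s subtree must match is the restriction of $\dist_{T^\#}$ to exactly that leaf set, and this restriction is fixed once and for all by $T^\#$. When $x$ is non-$\g$, this restricted metric on its leaves is typically \emph{not} realizable by any rooted height-$(\ell + \text{height of the clusters below})$ homogeneous subtree --- for example, two of those leaves may be at $T^\#$-distance close to $2hg$, while any rearrangement of $x$'s children keeps every pair within distance $2(\text{height of }x)\cdot g$. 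So your inductive invariant (``every processed $\ell$-vertex roots a matching subtree'') is unachievable at the very first non-trivial level, and the bound $(2^\ell - 1)(\#\r + \#\y)$ rests on a local correction that does not exist. The preliminary reduction you flag as ``the main obstacle'' is indeed the obstacle, and it fails.

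The paper avoids this by transforming $T^\#$ rather than $T^0$. It repeatedly takes the lowest non-$\g$ vertex $u$ of $T^0$; all its $\ell$-children are $\g$, so each has a matching subtree living somewhere in $T^\#$ as a disjoint restricted subtree. Because these pieces are already correct internally (that is what being $\g$ means), one can gather them under a single $\ell$-vertex of $T^\#$ and arrange them there to mirror $u$'s subtree in $T^0$; this costs $O(2^\ell)$ swaps and does not break the matching of any other $\g$-cluster (swaps only reposition whole subtrees). After this, $u$ is effectively $\g$, and one iterates, invoking Claim~\ref{claim:bounding-yellow-hmg} to bound $\#\r+\#\y$ by $2\#\r$. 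The crucial asymmetry your proposal misses is that the matching subtrees of the $\g$-children live \emph{in $T^\#$} in an arbitrary arrangement, and it is $T^\#$ that must be reassembled; the leaf-to-slot assignment in $T^0$ is not the quantity that encodes the discrepancy.
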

\begin{proof}
	Pick a lowest non-$\g$-vertex $u$ in $T^0$. Being lowest, all
	$\ell$-children of $u$ must be colored $\g$. In fact,
	all $\ell$-vertices on the level below $u$ must be colored
	$\g$. Make
	$u$ a $\g$-vertex by transforming the subtree below
	$u$ in $T^\#$ to match the corresponding subtree
	in $T^0$. This takes at most $2^{\ell+1}$ swaps.
	
	Repeat until $T^0$ and $T^\#$ match.
	The inequality then follows from Claim~\ref{claim:bounding-yellow-hmg}.
\end{proof}

\subsection{Constructing a battery of tests}
\label{section:constructing}

We now construct a battery of tests from the $\r$-vertices.
The basic idea is that each $\r$-vertex has
two $\g$-children which satisfy many of the requirements
of a battery and therefore can potentially be used as a test pair.
In particular, they are the roots of dense subtrees that are matching with their corresponding restricted subtrees
in $T^\#$, but their evolutionary distance differs
in $T^0$ and $T^\#$. Note that we also have a
number of $\r$-vertices
that scales linearly in the swap distance
by Claim~\ref{claim:swaps-r}.
However one issue to address
is the global requirement of the battery.
In words, we need to ensure that the test pairs
{\em do not intersect}. We achieve this by
sparsifying the battery. A similar argument
was employed in~\cite{MosselRoch:12}.

In this section, $T^0$ and $T^\#$ are fixed. To simplify
notation, we let $\Delta = \swap\left(T^0,T^\#\right)$.
Fix $\wp = 1$. Choose $\ell = \ell(g,\wp) \geq 2$ as
in Proposition~\ref{prop:distinguishing}.
Then take
$
\Gamma = 2\ell,
$
and set
$\gamma_t \geq \Gamma$ and $C$ as
in Proposition~\ref{prop:distinguishing}.
In the rest of this subsection, we build a
$(\ell,\wp,\Gamma,\gamma_t,I)$-battery
$$
\{((y^0_i,z^0_i);(Y^0_i,Z^0_i))\}_{i=1}^I
\text{ (in $T^0$)
and }
\{((y^\#_i,z^\#_i);(Y^\#_i,Z^\#_i))\}_{i=1}^I
\text{ (in $T^\#$)}
$$
with
corresponding $\alpha_i$s.
We number the $\r$-vertices $i=1,\ldots,I'$
and we build one test panel for each
$\r$-vertex. Here $I' \geq I$ as we will later
need to reject some of the test panels to avoid unwanted
correlations.

\paragraph{Co-hanging pairs in $T^\#$.}
We first construct test panels that satisfy
the pair and cluster requirements of the battery.
Let $x^0_i$ be an $\r$-vertex in $T^0$
and let $x^\#_i$ be the corresponding vertex
in $T^\#$. Because $x^0_i$ is colored
$\r$, by construction it has at least $2^\ell - 1$
$\g$-children, but its $\g$-children are ``connected in
different ways'' in $T^\#$.
In particular, at least one pair of $\g$-children
$(y^0_i,z^0_i)$
must be at a different evolutionary distance
in $T^0$ than the corresponding pair $(y^\#_i,z^\#_i)$
in $T^\#$ (see the remark after Definition~\ref{def:metric}). We use these pairs
as our test panel. 
\begin{claim}[Test panels]
\label{claim:cohanging-hmg}
For each $\r$-vertex $x^0_i$
we can find
a test pair of $\g$-children $(y^0_i,z^0_i)$ of $x^0_i$,
with corresponding test pair $(y^\#_i,z^\#_i)$ in $T^\#$,
such that the test panel satisfy the cluster and
pair requirements of a battery.
\end{claim}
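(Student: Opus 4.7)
The plan is to pick $(y^0_i, z^0_i)$ as a pair of $\g$-children of $x^0_i$ whose pairwise evolutionary distance differs between $T^0$ and $T^\#$, taking $(Y^0_i, Z^0_i)$ to be their $\g$-clusters. Most battery requirements will then follow directly from the structure of $\g$-clusters; the only delicate point is co-hanging on the $T^\#$ side.

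First I establish existence of a suitable pair. Because $x^0_i$ is $\r$-colored, step 2(b) of the procedure in Section~\ref{sec:finding-matching-hmg} forces at most one of its $\ell$-children to be non-$\g$, so $x^0_i$ has $m \geq 2^\ell - 1 \geq 3$ $\g$-children $w_1, \ldots, w_m$ with $\g$-clusters $W_1, \ldots, W_m$. The $\r$-coloring also asserts that were $x^0_i$ itself $\g$-colored, the resulting $\g$-cluster would \emph{not} metric-match the corresponding restricted subtree in $T^\#$, although each individual $W_j$ does match $W^\#_j$. Decomposing any cross-child leaf distance as
\[
\dist_T(a,b) \;=\; \dist_T(a, w_j) \;+\; \dist_T(w_j, w_{j'}) \;+\; \dist_T(w_{j'}, b),\qquad a\in L_{w_j},\ b\in L_{w_{j'}},
\]
the first and third summands agree between $T^0$ and $T^\#$ by the per-child matching, so the mismatch must localize to some pair with $\dist_{T^0}(w_j, w_{j'}) \neq \dist_{T^\#}(w^\#_j, w^\#_{j'})$; I select such a pair as $(y^0_i, z^0_i)$.

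Next I check the easy requirements. \emph{Density (1a)}: starting from the root of any $\g$-cluster, each $\ell$-step loses at most one $\ell$-child, yielding at least $(2^\ell - 1)^i = (2^\ell - \wp)^i$ vertices on the $i\ell$-th level with $\wp = 1$. \emph{Matching (2a)}: immediate from the $\g$-coloring of $w_j$ and $w_{j'}$. \emph{Co-hanging in $T^0$}: the two $\ell$-children $y^0_i, z^0_i$ are on opposite sides of $x^0_i$, their $\g$-clusters lie strictly below them, and the connecting path in $T^0$ visits only $x^0_i$ and the two upward $\ell$-segments, giving both empty edge intersection and root-path separation. \emph{Proximity (2c)}: $\gdist_{T^0}(y^0_i, z^0_i) = 2\ell = \Gamma$, so the $T^0$-pair is proximal, fulfilling the ``at least one proximal pair'' clause of (2d). \emph{Distance gap (2d)}: by Definition~\ref{def:regphy} all edge weights lie on the lattice $\quantum \intgr$, so the two path sums are multiples of $\quantum$; being unequal by construction, they differ by at least $\quantum$.

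The main obstacle is \emph{co-hanging in $T^\#$}, since a priori $L_y$ and $L_z$ could be interleaved under $T^\#$, forcing $Y^\#_i \cap Z^\#_i$ to share edges. To resolve this I would refine the choice of pair. Among the $m \geq 3$ $\g$-children of $x^0_i$, consider the topology induced on the LCAs $w^\#_1, \ldots, w^\#_m$ by the restricted subtree of $T^\#$ on $\bigcup_j L_{w_j}$: since each $W^\#_j$ metric-matches the binary cluster $W_j$ and $T^\#$ is binary, this induced object is a binary tree on $m$ super-leaves, and any binary tree on $\geq 3$ leaves contains a cherry. For any cherry pair $(w^\#_j, w^\#_{j'})$ the subtrees $W^\#_j$ and $W^\#_{j'}$ hang off a common ancestor in $T^\#$ and hence are co-hanging. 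If that cherry already exhibits a distance discrepancy we are done; otherwise, one walks along the super-topology and uses the fact that the net metric mismatch at $x^0_i$ must localize to some adjacent super-leaf pair, yielding a cherry that simultaneously satisfies co-hanging and the distance-gap requirement. This argument, which combines the per-child matching with the binary structure of $T^\#$, is the subtlest step and is where I expect to spend most of the effort.
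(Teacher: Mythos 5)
Your overall approach matches the paper's: choose two $\g$-children of $x^0_i$ whose pairwise evolutionary distance differs between $T^0$ and $T^\#$, take their $\g$-clusters as test subtrees, and verify the battery requirements directly. Your checks of density, matching, co-hanging in $T^0$, proximity, and the distance gap (including the nice observation that discretization forces the gap to be $\geq \quantum$) are all correct.

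The gap is in your treatment of co-hanging in $T^\#$. You propose restricting attention to a cherry pair in the super-topology on the $w^\#_j$'s, and then, if that cherry has no distance mismatch, ``walking along the super-topology'' to find an adjacent pair that does. That final step is not worked out, and it can genuinely fail: take $m=4$ with the super-topology in $T^\#$ being balanced, where the two cherries $(w^\#_1,w^\#_2)$ and $(w^\#_3,w^\#_4)$ each join at the ``correct'' level $\ell$ above (so both match the $T^0$ distance $2g\ell$), but the two cherries join each other at a higher level (so all cross-pairs such as $(w^\#_1,w^\#_3)$ have a mismatch). No cherry exhibits a mismatch, so your argument stalls without selecting a pair.

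What you're missing is that in the homogeneous case co-hanging in $T^\#$ is automatic for \emph{every} pair of $\g$-children, so the cherry restriction is both unnecessary and insufficient. Since $y^0_i,z^0_i$ are $\ell$-children of $x^0_i$ they sit on the same $\ell$-level of $T^0$, and every leaf of $Y^0_i$ is at evolutionary distance $g\cdot(\mathrm{level})$ from $y^0_i$; metric-matching then forces $y^\#_i$ and $z^\#_i$ to sit on the same level of the homogeneous tree $T^\#$. Two distinct same-level vertices of a rooted complete binary tree have disjoint descendant sets, so $Y^\#_i$ and $Z^\#_i$ (which hang strictly below their roots) are edge-disjoint, and the path between $y^\#_i$ and $z^\#_i$ goes up to their LCA and back down, never entering either subtree. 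Hence any mismatched pair works, and there is nothing left to check. (The paper states the co-hanging claim tersely and places the detailed non-intersection argument inside the sparsification Claim~\ref{claim:sparsification-hmg}; the substance is the same-level observation above.)
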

\begin{proof} 
	By construction, the test subtrees, that is,
	the $\g$-clusters rooted at the test vertices, are
	$(\ell,1)$-dense.
	The test subtrees are also matching, co-hanging and their
	roots are at different evolutionary distances in $T^0$
	and $T^\#$.
	Finally, the test pair in $T^0$ is proximal as
	$$
	\gdist_{T^0}(y^0_i, z^0_i) \leq 2\ell \leq \Gamma.
	$$
\end{proof}

\paragraph{Sparsification in $T^\#$.}
It remains to satisfy the global requirements of the
battery. By construction the test subtrees
are non-intersecting in both $T^0$ and $T^\#$
(see the proof of Claim~\ref{claim:sparsification-hmg}).
However we
must also ensure that proximal/semi-proximal connecting paths
and non-proximal hats do not intersect with each other
or with test subtrees from other test panels.
By construction, this
is automatically satisfied in $T^0$ where all test pairs
are proximal.
To satisfy this requirement in $T^\#$,
we make the collection of test pairs sparser
by rejecting a fraction of them.
\begin{claim}[Sparsification in $T^\#$]
\label{claim:sparsification-hmg}
Assume $\ell \geq 2$.
Let $\hcal' = \{(y^0_i,z^0_i); (y^\#_i,z^\#_i)\}_{i=1}^{I'}$
be the test panels
constructed in Claim~\ref{claim:cohanging-hmg}.
We can find a subset $\hcal \subseteq \hcal'$
of size
$$
|\hcal| = I \geq \frac{1}{1 + 2^{2\gamma_t + 2}} I' \geq \frac{\Delta}{C_\scal (1 + 2^{2\gamma_t + 2})}
$$
such that the test panels in $\hcal$
satisfy all global requirements of a battery.
\end{claim}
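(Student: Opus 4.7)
The plan is to verify the two global intersection requirements of the battery separately: the $T^0$ side turns out to hold automatically, while on $T^\#$ I would extract a large independent set from a conflict graph whose maximum degree is bounded by $2^{2\gamma_t+2}$.

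First, on $T^0$, each $\fcal^0_i$ consists of the two $\g$-clusters $Y^0_i, Z^0_i$ rooted at $\g$-children of a distinct $\r$-vertex $x^0_i$, joined by the length-$2\ell$ connecting path through $x^0_i$, so $\fcal^0_i$ lies entirely inside the subtree rooted at $x^0_i$ and uses only $\g$-children of $x^0_i$. For two distinct $\r$-vertices $x^0_i \neq x^0_{i'}$, either they are incomparable in $T^0$ (in which case their descendant subtrees are disjoint), or one is a strict descendant of the other, say $x^0_i$ below $x^0_{i'}$. In the latter case, since $\g$-clusters only pass through $\g$-colored $\ell$-vertices while $x^0_i$ is itself colored $\r$, $x^0_i$ must lie below the unique non-$\g$ $\ell$-child of $x^0_{i'}$, which is excluded from $\fcal^0_{i'}$. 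Thus $\fcal^0_i \cap \fcal^0_{i'} = \emptyset$ in either case, and no sparsification is needed on the $T^0$ side.

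For $T^\#$, I would define a conflict graph $H = (\hcal', E_H)$ with $i \sim_H i'$ iff $\fcal^\#_i \cap \fcal^\#_{i'}$ contains an edge, and build $\hcal$ by the standard greedy rule: pick any remaining panel, add it to $\hcal$, remove its $H$-neighbors, and iterate. This yields $|\hcal| \geq I'/(1+\Delta(H))$, so it suffices to show $\Delta(H) \leq 2^{2\gamma_t+2}$; combined with $I' \geq \Delta/C_\scal$ from Claim~\ref{claim:swaps-r} this gives the stated bound. The key locality observation is that every edge of $\fcal^\#_i$ lying outside of $Y^\#_i \cup Z^\#_i$ is within graph distance $\gamma_t$ of an apex $y^\#_i$ or $z^\#_i$ in $T^\#$, regardless of whether $\fcal^\#_i$ takes the linkage form (a connecting path of length at most $\gamma_t$) or the non-proximal form (two $\gamma_t$-toppings). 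Hence if $i' \sim_H i$, then at least one apex of $i'$ must lie within graph distance $2\gamma_t$ of an apex of $i$; since in the binary tree $T^\#$ a ball of radius $2\gamma_t$ contains $O(2^{2\gamma_t})$ vertices, and each panel has two apex slots on each side, the total count of conflicting panels is at most $2^{2\gamma_t+2}$.

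The delicate part will be absorbing conflicts that arise from the test subtrees themselves rather than from the extra edges, since $Y^\#_i$ and $Y^\#_{i'}$ could in principle share edges deep inside $T^\#$ even though their leaf sets $L_i, L_{i'}$ are disjoint. I would handle this by exploiting the matching condition together with leaf-set disjointness: for a shared edge to lie deep inside both $Y^\#_i$ and $Y^\#_{i'}$, the rigidity of the matched $\g$-cluster structure on disjoint leaf sets forces the apex vertices in $T^\#$ to be arranged so that $(y^\#_{i'},z^\#_{i'})$ still lies in a bounded neighborhood of $(y^\#_i,z^\#_i)$, folding this case back into the locality bound above.
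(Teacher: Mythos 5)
Your overall architecture matches the paper's: establish disjointness of test subtrees, then greedily extract a subset of panels via a locality/degree bound of order $2^{2\gamma_t+2}$. Your $T^0$ argument and the greedy sparsification step are essentially the same as the paper's. However, there is a genuine gap precisely in the step you yourself flag as delicate, and the resolution you sketch does not work.

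You propose to handle the possibility that $Y^\#_i$ and $Y^\#_{i'}$ share edges deep inside $T^\#$ by claiming that ``rigidity'' forces the apexes $(y^\#_{i'},z^\#_{i'})$ into a bounded neighborhood of $(y^\#_i,z^\#_i)$. This is not true and cannot be salvaged in this form: two restricted subtrees of $T^\#$ on disjoint leaf sets can a priori share an edge far below both of their roots, with the roots themselves at essentially unbounded graph distance (e.g.\ one root a deep ancestor of the other). If such sharing were possible, a single test subtree could intersect arbitrarily many others without any apex proximity, and the degree bound on your conflict graph would collapse --- this is exactly what makes the lemma delicate, and locality alone does not absorb it.

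What the paper actually does is stronger and different: it proves that the test subtrees are \emph{pairwise disjoint in $T^\#$}, so that shared-subtree conflicts simply do not occur and all conflicts are confined to connecting paths and hats, which genuinely are local to apexes. The disjointness argument leans on homogeneity in an essential way: since $T^\#$ is also a complete binary tree with unit branch lengths, each matching subtree $W^\#$ is graph-isomorphic to its $(\ell,1)$-dense $\g$-cluster $W^0$, hence at every $\ell$-level it contains at least $2^\ell - 1$ of the $2^\ell$ descendants of each of its $\ell$-vertices. If two such subtrees shared a vertex, they would share its parent $\ell$-vertex, and then --- by pigeonhole, using $\ell \geq 2$ so that $2(2^\ell-1) > 2^\ell$ --- they would share at least one $\g$-child, then a $\g$-child of that, and so on down to a shared leaf, contradicting leaf-set disjointness (which you do correctly inherit from the $T^0$ side). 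This disjointness lemma is the missing ingredient; once it is in place, your conflict-graph formulation and the paper's direct removal rule become interchangeable and both give the stated bound.
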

\begin{proof} 
	Let $\{(Y^0_i,Z^0_i); (Y^\#_i,Z^\#_i)\}_{i=1}^{I'}$
	be the test subtrees corresponding to $\hcal'$.
	Let $W^0_1$ and $W^0_2$ be two test subtrees 
	in $T^0$ (not necessarily from the same test pair) and let  $W^\#_1$ and $W^\#_2$ be their matching subtrees in $T^\#$. We argue that these subtrees are non-intersecting in both $T^0$ and $T^\#$. We start with $T^0$. By construction (see Claim~\ref{claim:cohanging-hmg}), $W^0_1$ is a maximal $\g$-cluster: its root $w^0_1$ is a 
	$\g$-vertex whose parent $\ell$-vertex is colored $\r$; all $\g$-children of $w^0_1$ are in $W^0_1$, as well as all of their $\g$-children and so forth. The same goes for $W^0_2$, whose root we denote by $w^0_2$. If neither $w^0_1$ nor $w^0_2$ is a descendant
	of the other, then $W^0_1$ and $W^0_2$ are necessarily non-intersecting. Assume instead, w.l.o.g., that
	$w^0_2$ is a descendant of $w^0_1$. Because the
	parent $\ell$-vertex of $w^0_2$ is colored $\r$,
	then by construction $w^0_2$ and all of its decendants (including the subtree $W^0_2$) cannot
	be in $W^0_1$. 
	Note in particular that $W^0_1$ and $W^0_2$
	share no leaf.
	We move on to $T^\#$. Because 
	$T^\#$ is in $\hmgphy$, the subtrees $W^\#_1$ and
	$W^\#_2$ are isomorphic as graphs to $W^0_1$ and
	$W^0_2$. In particular, their structure is the same
	as the one described above. Let $w^\#_1$ and
	$w^\#_2$ be the vertices corresponding to
	$w^0_1$ and $w^0_2$ in $T^\#$. Again,
	if neither $w^\#_1$ nor $w^\#_2$ is a descendant
	of the other one, then $W^\#_1$ and $W^\#_2$ are non-intersecting. Assume instead, w.l.o.g., that
	$w^\#_2$ is a descendant of $w^\#_1$. If $W^\#_1$
	and $W^\#_2$ share a vertex, say $z$, then the parent $\ell$-vertex of $z$, say $\tilde{z}$, is also shared because of the structure
	of $W^\#_1$ and $W^\#_2$. But then $W^\#_1$ and $W^\#_2$ contain at least $2^\ell-1$ $\g$-children
	of $\tilde{z}$---so they must have at least one such $\g$-child in common (recall that $\ell \geq 2$). The same holds for the $\g$-children of these common $\g$-children, and so on. As a result, $W^\#_1$ and $W^\#_2$ must share at least one leaf, which contradicts the fact that $W^0_1$ and $W^0_2$ 
	(which have the same leaf sets as $W^\#_1$ and $W^\#_2$) share no leaf.  
		
	As discussed above, it remains to appropriately sparsify the set $\hcal'$ of test pairs. We proceed as follows.
	Start with test panel $((y^0_1,z^0_1); (y^\#_1,z^\#_1))$.
	Remove
	from $\hcal'$ all test panels $i \neq 1$ such that
	\begin{equation}\label{eq:definition-hcal}
	\min\{\gdist_{T^\#}(v,w)\ :\  v \in \{y^\#_1,z^\#_1\},
	w \in \vcal(Y^\#_i)\cup\vcal(Z^\#_i)\}
	\leq 2\gamma_t.
	\end{equation}
	Because there are at most $2\cdot 2^{2\gamma_t + 1}$
	vertices in $T^\#$ satisfying the above condition and that
	the test subtrees are non-overlapping in $T^\#$, we remove
	at most $2^{2\gamma_t + 2}$ test panels from
	$\hcal'$.
	
	Let $i$ be the smallest index remaining in $\hcal'$.
	Proceed as above and then repeat until
	all indices in $\hcal'$ have been selected or rejected.
	
	At the end of the procedure, there are at least
	$$
	\frac{1}{1 + 2^{2\gamma_t + 2}} I'
	$$
	test panels remaining, the set of which we denote by $\hcal$.
	Recalling that $\gamma_t \geq \Gamma$,
	note that, in $\hcal$, the connecting paths of proximal/semi-proximal
	pairs
	and the hats of non-proximal pairs
	cannot intersect with each other or with any of the
	test subtree rooted at test vertices in $\hcal$
	by~\eqref{eq:definition-hcal}.
\end{proof}

\subsection{Proof of Proposition~\ref{prop:existence-batteries-hmg}}
\label{section:hmg-app-battery}

It remains to prove
Proposition~\ref{prop:existence-batteries-hmg}.
Recall that $\wp = 1$, $\ell = \ell(g,\wp)$ is chosen as
in Proposition~\ref{prop:distinguishing},
$\Gamma = 2 \ell$,
and
$\gamma_t \geq \Gamma$ and $C$
are also chosen as
in Proposition~\ref{prop:distinguishing}.

By Claim~\ref{claim:swaps-r}, 
the number of $\r$-vertices is
at least $\frac{1}{C_\scal}\, \swap\left(T^0,T^\#\right)$.
By Claim~\ref{claim:cohanging-hmg}, 
for each $\r$-vertex, we can construct 
a test panel satisfying the pair and
cluster requirements of the battery.
By Claim~\ref{claim:sparsification-hmg},
we can further choose a fraction $\frac{1}{1 + 2^{2\gamma_t + 2}}$ of these test panels that also
satisfy the global requirement of the battery.
To sum up, we have built a
	$(\ell,\wp,\Gamma,\gamma_t,I)$-battery
	$$
	\{((y^0_i,z^0_i);(Y^0_i,Z^0_i))\}_{i=1}^I
	\text{ (in $T^0$) and }
	\{((y^\#_i,z^\#_i);(Y^\#_i,Z^\#_i))\}_{i=1}^I
	\text{ (in $T^\#$)}
	$$
	with
	$$
	I \geq  \frac{\Delta}{C_\scal (1 + 2^{2\gamma_t + 2})}.
	$$
That concludes the proof of Proposition~\ref{prop:existence-batteries-hmg}.

\section{General trees}
\label{sec:general}

We now prove our main result in the
case of general trees. Once again,
we use the tests introduced in
Section~\ref{sec:distinguishing-tests}.
We also use a procedure similar to that 
in the homogeneous case to construct
dense subtrees shared
by $T^\#$ and $T^0$. However, 
as described in the next subsections,
a number
of new issues arise, mainly
the possibility of overlapping
subtrees and non-co-hanging pairs.
Fix $f,g < g^*,\quantum$
and let $\phy = \phy^{(n)}_{f,g}[\quantum]$.

\subsection{Blow-up distance}

We first need an appropriate notion
of distance between general trees. Although standard
definitions exist~\cite{SempleSteel:03}, the following definition
(related to tree bisection and reconnection~\cite{AllenSteel:01})
will be particularly convenient for
our purposes.
\begin{definition}[Blow-up distance]
	\label{def:blowup}
	A {\em $B$-blowup operation} on a phylogeny
	consists in two steps: 
	\begin{itemize}
		\item Remove a subset
		of $B$ edges. The
		non-leaf, isolated vertices resulting from this 
		first step are also removed.
		
		\item Add $B$ new weigthed edges to form a
		new phylogeny with the same leaf set. 
	\end{itemize}
	The {\em blowup distance}
	$\blowup(T,T')$ between phylogenies $T$ and $T'$
	is defined as the smallest $B$
	such that there is a $B$-blowup operation
	transforming $T$ into $T'$ up to isomorphism. 
\end{definition}
\noindent 
Because the blowup
operation is invertible, we have
$\blowup(T,T') = \blowup(T',T)$.
\srevision{Observe that the blow-up distance is
	a metric.}
We will need a bound on the size of the neighborhood
around a tree.
\begin{claim}[Neighborhood size: Blowup distance]
	\label{claim:neighborhood-size}
	Let $T$ be a phylogeny in $\phy$.
	\srevision{The number of phylogenies that can be obtained from $T$ by a $\Delta$-blowup operation
	is at most $(12 g\invquantum n^{2})^\Delta$.}
\end{claim}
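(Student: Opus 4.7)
My plan is to realize every $\Delta$-blowup as a sequence of $\Delta$ elementary single-blowup moves, each consisting of removing one edge and adding one new weighted edge. Combined with the standard phylogenetic suppression of resulting degree-2 internal vertices, such a move is essentially a subtree-prune-regraft operation and takes one phylogeny to another at blowup distance at most $1$. Because any $\Delta$-blowup can be performed one edge-pair at a time, the number of reachable phylogenies is at most (per-step count)$^\Delta$; this over-counts (different sequences may produce the same $T'$), but that is fine for an upper bound.

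For each elementary single-blowup on a phylogeny with $n$ leaves, I specify: (a) the edge to remove, giving at most $2n - 3 \leq 2n$ choices; (b) the two endpoints of the new edge, where one endpoint attaches at one of the two stumps left by the removal (a factor of $2$) and the other lies at an attachment position in the remaining tree, either an internal vertex ($\leq n-2$ options) or a subdivision of an existing edge ($\leq 2n-3$ options), for at most $3n - 5 \leq 3n$ choices in total; and (c) the weight of the new edge, a multiple of $\quantum$ in $[f,g]$, giving at most $\lfloor (g-f)/\quantum \rfloor + 1 \leq g\invquantum$ options. Multiplying yields $(2n)(2)(3n)(g\invquantum) = 12\, g\invquantum\, n^2$ specifications per single-blowup, and iterating over $\Delta$ steps gives the stated $(12\, g\invquantum\, n^2)^\Delta$ bound.

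The main obstacle is to carefully justify the decomposition and the stump-attachment constraint in (b). For the decomposition, the key point is that every $T'$ at blowup distance $\Delta$ from $T$ can be obtained by $\Delta$ sequential single-blowups: a greedy argument, pairing each removed edge with an added edge whose placement locally restores tree validity, should work because the degree constraints are fixable one pair at a time under the suppression convention. For the attachment constraint, configurations where both stumps of a removed edge get fully suppressed (so the added edge falls entirely elsewhere) can always be re-ordered within the sequence so that the added edge in the corresponding step is anchored at a stump, preserving the counting without missing any reachable topology. A direct counting that separately picks $\Delta$ removed edges and $\Delta$ new edges with free endpoints would yield an $n^3$-type bound, so tying the two steps together via the stump constraint is the essential trick that produces the $n^2$ in the claim.
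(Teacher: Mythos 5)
Your proposal has a genuine gap. The crux of your argument is the claim that every $\Delta$-blowup can be realized as a sequence of $\Delta$ single-blowups, each of which (i) is a valid phylogeny-to-phylogeny move and (ii) attaches its new edge at a stump of \emph{that very step's} removed edge. Neither (i) nor (ii) is established. A $\Delta$-blowup is defined as a batch operation (remove a set of $\Delta$ edges, then add $\Delta$ edges), and its intermediate states need not be phylogenies; the ``greedy pairing'' you invoke to fix this, and the ``re-ordering'' you invoke to justify the stump constraint, are asserted without proof. In fact the stump constraint is the harder part to rescue: it is easy to conceive of $\Delta$-blowups in which several added edges connect only \emph{newly created} vertices to each other, so that no sequential pairing of removals with additions can force each added edge to touch a stump of its paired removal. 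Without proving that your restricted single-step moves reach every $T'$ with $\blowup(T,T')\leq\Delta$, you do not have an upper bound.

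The paper's proof sidesteps this entirely by staying at the batch level. It first counts the $(2n-3)^\Delta$ choices of removed edges, and then observes that every added edge must be incident to a vertex that is degree-deficient after removal (at most $2\Delta$ stumps) or to a vertex created by a previous addition (at most $\Delta$), so each addition has at most $3\Delta\cdot g\invquantum$ choices. Multiplying and then using $\Delta\leq 2n-3$ gives $(12\, g\invquantum\, n^2)^\Delta$ without any decomposition into single moves. If you want to retain a per-step flavor, the essential fix is to drop the stump constraint and instead track the set of ``attachable'' vertices (stumps plus newly created vertices) globally across the whole operation, which is precisely what the paper's $3\Delta$ bound does.
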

\begin{proof}
	There are $2n-3$ edges in $T$ so there are
	at most $(2n-3)^\Delta$ choices for the first
	step of the blowup operation.
	
	\srevision{For the second step, we add edges one by one. The weight
	of each edge can take at most $g\invquantum$ values.
	Each new edge must further be incident with a vertex existing at the end of the first step
	or adjacent to a newly added edge. Observe that 
	the edge removal
	in the first step produces at most $2\Delta$ vertices
	which can be used in the second step to attach a new edge.
	Moreover, each edge addition produces at most one
	new vertex to which subsequent edges can be attached.
	Since we add a total of $\Delta$ edges, there are at any stage
	at most $3\Delta$ choices for an attachment. 
	That is, there are at most $(3\Delta g\invquantum)^\Delta$
	choices for the second step of the operation.}
	
	Since clearly the blowup distance is
	$\leq 2n-3$, there are overall at most
	$$
	(2n-3)^\Delta (3\Delta g\invquantum)^\Delta
	\leq (3 g\invquantum (2n-3)^2)^\Delta
	\leq (12 g\invquantum n^{2})^\Delta,
	$$
	phylogenies that can be produced with a
	$\Delta$-blowup operation.
\end{proof}

\subsection{Main steps of the proof}
\label{sec:high-level-proof}

In the following subsections, we prove
the existence of a sufficientlty large
battery of distinguishing
tests.
\begin{proposition}[Existence of batteries]
	\label{prop:existence-batteries}
	Let $\wp = 5$, $\ell = \ell(g,\wp)$ as
	in Proposition~\ref{prop:distinguishing},
	$$
	\Gamma = \max\left\{(6 + 2\invquantum g) \ell,
	6 g \invquantum \log_2 \left(\frac{8}{1 - 1/\sqrt{2}}\right) + 2\ell g \invquantum + 4\right\},
	$$
	and
	$\gamma_t \geq \Gamma$, a multiple of $\ell$,
	and $C$
	as
	in Proposition~\ref{prop:distinguishing}.
	For all $T^\# \neq T^0 \in \phy$,
	there exists a
	$(\ell,\wp,\Gamma,\gamma_t,I)$-battery
	$$
	\{((y^0_i,z^0_i);(Y^0_i,Z^0_i))\}_{i=1}^I
	\text{ (in $T^0$) and }
	\{((y^\#_i,z^\#_i);(Y^\#_i,Z^\#_i))\}_{i=1}^I
	\text{ (in $T^\#$)},
	$$
	with
	$$
	I \geq  \frac{\blowup(T^0,T^\#)}{
		20 C_\ocal (1 + 2^{6\gamma_t + C_w + 3}g\invquantum)
	},
	$$
	where
	$$
	C_w = 3 g \invquantum \log_2 \left(\frac{8}{1 - 1/\sqrt{2}}\right) + \ell g \invquantum + 2,
	$$
	and
	$C_\ocal$ is a constant (defined in Claim~\ref{claim:blowup-overlap}
	below).
\end{proposition}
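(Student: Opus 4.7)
The plan is to mimic the homogeneous construction of Section~\ref{sec:homo}, but with two substantive modifications forced by the lack of symmetry. First I would run an $\ell$-level coloring procedure analogous to the one in Section~\ref{sec:finding-matching-hmg}, but with the tolerance parameter raised to $\wp = 5$ (so that a vertex can still be colored $\g$ when several of its $\ell$-children are non-$\g$). The larger tolerance is needed because in a general phylogeny, an $\ell$-vertex may have up to $2^\ell$ descendants spread in very unbalanced ways, and the matching check against $T^\#$ is stricter than in the homogeneous case. As before, an $\r$-vertex $x^0_i$ supplies two $\g$-children whose pairwise evolutionary distance in $T^0$ must differ from the corresponding distance in $T^\#$, since otherwise the metric of the extended $\g$-cluster at $x^0_i$ would match $T^\#$ and we could have colored $x^0_i$ green. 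An analogue of Claim~\ref{claim:bounding-yellow-hmg} bounding $\#\y$ by $\#\r$ should still hold by the same forest-of-multifurcations argument, and a counterpart of Claim~\ref{claim:swaps-r} should relate the number of $\r$-vertices to $\blowup(T^0,T^\#)$ via a constant $C_\ocal$: repeatedly promoting a lowest non-$\g$-vertex to $\g$-status costs $O(1)$ blow-up operations per promotion because the matching subtree can be rebuilt by a bounded number of edge removals/insertions near the cluster.

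The main obstacle, and the reason Section~\ref{sec:general} has a split into two subsections, is that in a general phylogeny two $\g$-clusters of $T^0$ may correspond to subtrees of $T^\#$ whose edge sets overlap, something that does not happen in the homogeneous setting (see Claim~\ref{claim:sparsification-hmg}). To handle this I would split into two cases based on the amount of overlap produced by the coloring procedure. In the \emph{small overlap regime}, the subtrees in $T^\#$ behave essentially as in the homogeneous case: a constant fraction of the $\r$-vertices yield test panels $((y^0_i,z^0_i);(Y^0_i,Z^0_i))$ in $T^0$ and $((y^\#_i,z^\#_i);(Y^\#_i,Z^\#_i))$ in $T^\#$ which are co-hanging and dense, and one can sparsify by a factor $1 + 2^{6\gamma_t + C_w + 3} g \invquantum$ (rather than $1 + 2^{2\gamma_t+2}$) to enforce the global requirement of Definition~\ref{def:battery}. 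The factor $g\invquantum$ and the exponent $6\gamma_t + C_w + 3$ reflect the fact that the graph distance $\gdist$ relevant for avoiding collisions must be measured in a tree where each edge of nominal length $\leq g$ may host up to $O(g\invquantum)$ discretized sub-edges, and that we must stay clear of both hats and blown-up regions. The choice of $\Gamma$ in the statement is exactly what is needed so that proximal pairs have evolutionary distance bounded by $g\Gamma$ while $\g$-clusters reach the density and depth required by Proposition~\ref{prop:distinguishing}.

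In the \emph{large overlap regime}, the procedure above does not produce enough disjoint test panels, because many $\g$-clusters are competing for the same edges of $T^\#$. The plan here is to argue that a large overlap is itself a source of signal: the shared edges of $T^\#$ form a ``boundary'' around which one can locate pairs $(y^\#,z^\#)$ such that $y^\#$ lies in one overlapping image and $z^\#$ in another, while their preimages $(y^0,z^0)$ in $T^0$ live in cleanly separated $\g$-clusters. The resulting evolutionary distance in $T^\#$ is then necessarily different from the one in $T^0$ (since two matching clusters cannot be isometrically embedded into the same shared region of $T^\#$ without collision). This should produce a number of test panels at least linear in the total overlap measured in number of shared edges, which in turn is at least linear in $\blowup(T^0,T^\#)$ in this regime. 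I expect this to be the hardest step, because one has to carefully choose the roots $y^\#, z^\#$ at the overlap boundary, verify the co-hanging requirement (which is automatic in the homogeneous case but not here), and check the evolutionary-distance gap using the four-point condition~\eqref{eq:four-point} together with the discretization $\quantum$.

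Combining the two regimes, in each case we obtain at least $\blowup(T^0,T^\#)/[20 C_\ocal (1 + 2^{6\gamma_t + C_w + 3} g\invquantum)]$ test panels satisfying all requirements of a $(\ell,\wp,\Gamma,\gamma_t,I)$-battery. The constant $20$ absorbs the splitting between the two regimes, the constant $C_\ocal$ comes from the bound tying $\#\r$ to $\blowup(T^0,T^\#)$, and the multiplicative factor $(1 + 2^{6\gamma_t + C_w + 3} g\invquantum)$ is the sparsification cost with $C_w$ playing the role of an upper bound on the ``width'' of a dense $\ell$-cluster measured in graph distance after discretization. The detailed case analysis, together with the careful definitions of matching-with-overlap and of boundary pairs, is carried out in Sections~\ref{section:manyr} and~\ref{section:largeOverlap} referenced in the overview.
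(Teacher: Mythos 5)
The high-level plan --- run a coloring procedure analogous to the homogeneous case, split into two regimes according to the size of the overlap in $T^\#$, and combine a ``many-$\r$'' construction with a ``boundary-of-the-overlap'' construction --- is exactly the paper's decomposition into Propositions~\ref{prop:battery-many-r} and~\ref{prop:battery-large-overlap}. However, several of the concrete claims in your sketch are wrong and would not survive a careful write-up.

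First, your explanation of $\wp = 5$ is not what the paper does. The coloring procedure in the general case still colors a vertex $\g$ only if \emph{at most one} of its $\ell$-children is non-$\g$; nothing is relaxed there, and the raw $\g$-clusters are $(\ell,1)$-dense, as in the homogeneous case. The $\wp = 5$ density requirement is imposed only because, in the large-overlap regime, the sparsification step has to \emph{cleave} pieces out of the $\g$-clusters (since the same maximal $\g$-cluster can be reused as a test subtree for several different overlap-boundary panels, and the connecting paths in $T^\#$ can also cut through them). That cleaving, not unbalanced descendant counts, is the source of the larger $\wp$.

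Second, the counterpart of Claim~\ref{claim:swaps-r} cannot read $\blowup(T^0,T^\#) \leq C_\ocal \, \#\r$. As you note yourself two sentences later, the whole reason for the second regime is that matching subtrees can overlap in $T^\#$, and the blow-up distance can be driven by overlap rather than by $\r$-vertices. The correct statement is $\blowup(T^0,T^\#) \leq C_\ocal(\#\r + |\ocal^\#|)$ (Claim~\ref{claim:blowup-overlap}); the two regimes are then defined by whether $|\ocal^\#|$ or $\#\r$ carries at least a fixed fraction of $\blowup$. Moreover the proof is not the homogeneous ``promote a lowest non-$\g$-vertex one at a time'' argument: instead one explicitly exhibits a blow-up operation by removing all non-cluster and overlap edges from $T^0$ (plus some cleanup) and adding edges to reach $T^\#$. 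In the many-$\r$ case an additional \emph{recoloring} step (Claim~\ref{claim:recoloring-case1}) is also needed to discard $\r$-vertices whose $\g$-children's clusters nonetheless overlap in $T^\#$, and one must check that this doesn't kill too many of them.

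Third, you have the sparsification factors swapped. The small-overlap (many-$\r$) regime sparsifies by $1 + 2^{2\gamma_t+2}$, exactly as in the homogeneous case (Claim~\ref{claim:sparsification-case1}). The factor $1 + 2^{6\gamma_t + C_w + 3}g\invquantum$ belongs to the large-overlap regime (Claim~\ref{claim:sparsification-case2}), where it arises from two effects you do not mention: pairwise separation must be enforced in \emph{both} $T^0$ and $T^\#$ (hence $6\gamma_t$ rather than $2\gamma_t$), and a given vertex may appear as a test vertex for up to $2^{C_w+1}g\invquantum$ different panels, because each panel is built from an overlap-shallow edge lying within graph distance $C_w$. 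So $C_w$ is not ``the width of a dense $\ell$-cluster after discretization''; it bounds the distance from a test vertex to the overlap-shallow edge that produced it, which in turn controls the multiplicity of a vertex across test panels. The quantity $\Gamma$ is taken as the maximum over the two regimes for the same reason that $\wp$ is taken as $5$: the final proposition must cover whichever regime applies.

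The ``key observation'' you invoke for the large-overlap case --- that quartets straddling the overlap boundary resolve differently in $T^0$ and $T^\#$, and hence, via the four-point condition, some pair of witnesses has strictly different evolutionary distance --- is the right idea and matches the paper's construction (the four near-boundary witnesses and the distinction $\tilde{y}^0_i\tilde{z}^0_i \mid \tilde{y}^0_j\tilde{z}^0_j$ versus $\tilde{y}^\#_i\tilde{y}^\#_j \mid \tilde{z}^\#_i\tilde{z}^\#_j$). But to turn this into a lower bound on the number of usable test pairs you still need the chain: (i) bound the number of overlap-shallow edges from below via~\eqref{eq:shallow-def}, (ii) show a constant fraction of these are ``useful'' (i.e., intersect another shallow edge in $T^\#$), and (iii) bound $C_w$ via Claim~\ref{claim:distance-to-witnesses}. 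None of that appears in your sketch, and it is exactly where the constants in the statement come from.
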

\begin{proof}
	This follows from
	Propositions~\ref{prop:battery-many-r}
	and~\ref{prop:battery-large-overlap}
	below.
\end{proof}
\noindent The choice of $\Gamma$ above will be justified in Claim~\ref{claim:cohanging-case1}
and in~\eqref{eq:gamma-requirement}.

From
Propositions~\ref{prop:distinguishing} 
and~\ref{prop:existence-batteries}
we obtain
of our main bound in the general case.
\begin{theorem}[Sequence-length requirement of ML: General trees]
	\label{thm:log-bound}
	For all $\delta > 0$, there exists $\kappa > 0$
	depending on $\delta$, $g$ and $\invquantum$
	such that the following holds.
	For all $n \geq 2$ and generating phylogeny
	$T^0 \in \phy^{(n)}_{f,g}[\quantum]$,
	if $\boldsymbol{\s}_X = (\s^i_X)_{i=1}^k$ is
	a set of $k = \kappa \log n$ i.i.d.~samples from the corresponding
	CFN model,
	then
	the probability that MLE
	fails to return $T^0$ is at most $\delta$.
\end{theorem}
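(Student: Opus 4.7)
The plan is to mirror the proof of Theorem~\ref{thm:main-hmg} essentially verbatim, replacing swap distance by blowup distance and using Claim~\ref{claim:neighborhood-size} in place of Claim~\ref{claim:neighborhood-size-hmg}. Concretely, for each $T^\# \neq T^0$ in $\phy^{(n)}_{f,g}[\quantum]$, I would invoke Proposition~\ref{prop:existence-batteries} to produce an $(\ell, \wp, \Gamma, \gamma_t, I)$-battery of size
\[
I \;\geq\; \frac{\blowup(T^0, T^\#)}{20\, C_\ocal (1 + 2^{6\gamma_t + C_w + 3} g \invquantum)} \;=:\; C_0\, \blowup(T^0, T^\#),
\]
and then feed this battery into Proposition~\ref{prop:distinguishing} to obtain a distinguishing event $A_{T^\#}$ such that
\[
\max\{\P_{T^\#}[A_{T^\#}^c], \P_{T^0}[A_{T^\#}]\} \;\leq\; \exp(-C k I) \;\leq\; \exp(-C_1 k \, \blowup(T^0, T^\#)),
\]
for a constant $C_1 = C \cdot C_0$ depending only on $g$ and $\invquantum$.

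Next, as in the homogeneous argument, I let $M_{T^\#}$ be the event that ML prefers $T^\#$ over $T^0$, and use the likelihood-ratio hypothesis testing inequality~\eqref{eq:hypothesis-testing} applied to the test $A_{T^\#}$ to get $\P_{T^0}[M_{T^\#}] \leq \P_{T^0}[A_{T^\#}] + \P_{T^\#}[A_{T^\#}^c] \leq 2\exp(-C_1 k \, \blowup(T^0, T^\#))$. Then I apply a union bound over all $T^\# \neq T^0$, grouping them by their blowup distance $\Delta = \blowup(T^0, T^\#)$ and using Claim~\ref{claim:neighborhood-size} to bound the number of trees at distance $\Delta$ by $(12 g\invquantum n^2)^\Delta$. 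This yields
\[
\P_{T^0}\bigl[\exists T^\# \neq T^0,\ M_{T^\#}\bigr] \;\leq\; \sum_{\Delta=1}^{2n-3} (12 g \invquantum n^2)^\Delta \cdot 2\exp(-C_1 k \Delta).
\]
Setting $k = \kappa \log n$ and choosing $\kappa$ large enough (depending on $g$, $\invquantum$, $\delta$) so that $C_1 \kappa > 2 + \log(12 g \invquantum)/\log n$ for all $n \geq 2$, the summand becomes a geometric series with ratio bounded by $n^{-c}$ for some $c>0$, making the whole sum at most $\delta$ for $n$ large; absorbing the finitely many small-$n$ cases into $\kappa$ gives the result.

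Since all the heavy lifting, namely constructing the battery (Proposition~\ref{prop:existence-batteries}) and showing it is distinguishing (Proposition~\ref{prop:distinguishing}), has already been done, the only thing to verify carefully in this final step is that the neighborhood growth rate $(12 g \invquantum n^2)^\Delta$ is polynomial in $n$ per unit of blowup distance, which matches the exponential decay rate $\exp(-C_1 \kappa \Delta \log n)$ of the error; picking $\kappa > (2 + \log(12 g \invquantum))/C_1$ makes each term at most $n^{-c\Delta}$, and the resulting geometric series has sum $O(n^{-c})$, which goes to $0$. The main (and essentially only) subtlety here is bookkeeping: making sure that the constants $C_0$ and $C_1$ depend only on the model parameters $f, g, \invquantum$ (through $\ell, \wp, \Gamma, \gamma_t$) and not on $n$, so that a single $\kappa$ works uniformly. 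There is no new analytic difficulty beyond what is already in the two propositions; this is a routine union-bound wrap-up.
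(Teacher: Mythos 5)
Your proposal is correct and takes essentially the same approach as the paper: invoke Proposition~\ref{prop:existence-batteries} and Proposition~\ref{prop:distinguishing} to get an event $A_{T^\#}$ with error $\exp(-C_1 k\,\blowup(T^0,T^\#))$, feed it into the hypothesis-testing inequality~\eqref{eq:hypothesis-testing}, and run a union bound over blowup distance using Claim~\ref{claim:neighborhood-size}. The paper in fact abbreviates this step by referring to ``arguing as in the proof of Theorem~\ref{thm:main-hmg},'' whereas you have spelled out the geometric-series bookkeeping; the argument and the dependence of the constants only on $g,\invquantum,\delta$ match.
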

\begin{proof}
	For $T^\# \neq T^0$,
	let $M_{T^\#}$ be the event that the {\it MLE} prefers $T^\#$
	over $T^0$ (including a tie),
	that is, the set of $\boldsymbol{\s}_X = (\s^{i}_X)_{i=1}^k$
	such that
	$
	\L_{T^\#}(\boldsymbol{\s}_X)
	\leq \L_{T^0}(\boldsymbol{\s}_X)
	$.
	By Propositions~\ref{prop:distinguishing}
	and~\ref{prop:existence-batteries},
	for all $T^\# \neq T^0 \in \phy^{(n)}_{f,g}[\quantum]$,
	there exists an event $A_{T^\#}$
	such that
	\begin{equation}\label{eq:distinguishing-event-general}
	\max\{\P_{T^\#}[A_{T^\#}^c], \P_{T^0}[A_{T^\#}]\}
	\leq e^{-C_1 k \blowup(T^0,T^\#)},
	\end{equation}
	where $C_1$ depends only on $g$ and $\invquantum$.
	Then, by a union bound,~\eqref{eq:hypothesis-testing},~\eqref{eq:distinguishing-event-general} and Claim~\ref{claim:neighborhood-size}, 
	arguing as in the proof of Theorem~\ref{thm:main-hmg}
	\begin{equation*}
	\P_{T^0}[\exists T^\# \neq T^0,\ M_{T^\#}]
		\leq \delta,
	\end{equation*}
	for $\kappa$ large enough, depending only on
	$g$, $\invquantum$ and $\delta$,
	for all $n \geq 2$.
	(This also proves Lemma~\ref{lemma:tv-bl}
	in Section~\ref{section:preliminaries}.)
\end{proof}

Finally:
\begin{proof}[Proof of Theorem~\ref{thm:main}]
	Combining Theorem~\ref{thm:log-bound}
	and the polynomial bound from
	Section~\ref{section:overview}
	immediately
	gives Theorem~\ref{thm:main}.
\end{proof}

\subsection{Finding matching subtrees}
\label{sec:finding-matching}

We now describe our procedure to construct
a battery of distinguishing tests for general trees. 
As in the homogeneous case, 
the procedure attempts to build dense,
maximal subtrees shared by $T^0$ and
$T^\#$. These subtrees are paired up 
appropriately to construct
distinguishing tests. For general trees, however, care must be taken
to deal with possible ``overlaps'' in $T^0$ and $T^\#$ (see Definition~\ref{def:overlap}).
Such overlaps produce unwanted dependencies
between the test pairs.

As a result,
we proceed in two stages: 
\begin{enumerate}
	\item First, similarly to the homogeneous case,
	pairs of matching subtrees are constructed.
	
	\item Second, if the overlap between the tests is too large,
	new distinguishing tests are constructed along
	the ``boundary of the overlap.''
\end{enumerate}
The first stage
is described below. Analysis of the size
of the overlap
is presented in Section~\ref{section:overlapSize}.
The more delicate second stage is described
in Sections~\ref{section:manyr} and~\ref{section:largeOverlap}.

We root $T^0$
arbitrarily.
\begin{definition}[$\ell$-vertices]
	For a fixed positive integer $\ell$,
	we call {\em $\ell$-vertices}
	those vertices in $T^0$ whose graph distance from the root
	is a multiple of $\ell$.
	The sets of all $\ell$-vertices at the same distance
	from the root are called {\em $\ell$-levels}.
	For an $\ell$-vertex $x$, its descendant $\ell$-vertices
	on the next $\ell$-level (that is, farther from the root)
	are called the {\em $\ell$-children of $x$}.
	We also refer to the $\ell$-children of $x$ as a family of {\em $\ell$-siblings}.
	By convention, the leaves
	of $T^0$ are also considered $\ell$-vertices irrespective
	of their distance from the root. They belong to the
	$\ell$-level immediately below them.
\end{definition}

Our goal
is to color each
$\ell$-vertex $x$ with the following
interpretation:
\begin{itemize}
	\item Green $\g$: indicating a matching subtree
	rooted at $x$
	that can be used to reconstruct ancestral states
	on $T^0$ and $T^\#$ using the same function
	of the leaf states.
	
	\item Red $\r$: indicating the presence among
	the $\ell$-children of $x$
	of a pair of matching subtrees
	that can be used as a distinguishing test
	because the distance between their roots differs in $T^0$
	and $T^\#$.
	
	
	\item Yellow $\y$: none of the above.
\end{itemize}
As before, we call $\g$-vertices (respectively $\g$-children)
those $\ell$-vertices (respectively $\ell$-children)
that are colored $\g$, and similarly for the
other colors.
Before describing the coloring procedure in
details, we need a definition.
\begin{definition}[$\g$-cluster]
	Let
	$x$ be a $\g$-vertex.
	Assume that each $\ell$-vertex below $x$ in $T^0$
	has been colored $\g$, $\r$,
	or $\y$ and that the
	leaves have been colored $\g$. The {\em $\g$-cluster}
	rooted at $x$ is the restricted subtree of
	$T^0$ containing
	all vertices and edges (not necessarily
	$\ell$-vertices) satisfying the following property: they lie
	on a path between $x$
	and a leaf below $x$ that traverses only $\ell$-vertices
	colored $\g$.
\end{definition}
We now describe the coloring procedure.
Below, when counting the $\ell$-children of an $\ell$-vertex
$x$ with a specified property, each leaf among the
$\ell$-children of $x$ counts as
$2^{\ell-d}$ vertices if $d$ is the
graph distance between $x$ and that leaf.
\begin{enumerate}
	\item Initialization
	\begin{enumerate}
		
		\item Root $T^0$ at an arbitrary vertex.
		(Note that $T^\#$ remains unrooted for this part of the
		proof where we are concerned with metric-matching
		as defined in Definition~\ref{def:matching}.)
		\item  All leaves of $T^0$ are colored $\g$.
		
	\end{enumerate}
	
	\item For each $\ell$-vertex $x$ in the
	$\ell$-level furthest from the
	root that is not yet colored, do the following:
	\begin{enumerate}
		\item Vertex $x$ is colored  $\g$ if:
		\begin{itemize}
			\item at most one
			of its $\ell$-children is non-$\g$ and;
			\item
			the resulting $\g$-cluster rooted at $x$
			and the subtree of $T^\#$
			restricted to the same leaf set are matching.
		\end{itemize}
		
		\item Else, vertex $x$ is colored  $\r$
		if:
		\begin{itemize}
			\item at most one
			of its $\ell$-children is non-$\g$;
			\item
			\srevision{and the following condition holds:} {\bf if} $x$ were colored $\g$,
			the resulting $\g$-cluster rooted at $x$
			and the corresponding matching
			subtree in $T^\#$ would {\bf not} be matching.
		\end{itemize}
		
		
		\item Else, vertex $x$ is colored $\y$. 
	\end{enumerate}
	
\end{enumerate}
In particular observe that, if $x$ is colored
$\y$,
at least two of its $\ell$-children are non-$\g$.

\subsection{Relating combinatorial distance,
	the number of matching subtrees and the overlap size}
\label{section:overlapSize}

Let $\#\g$ be the number of $\g$-vertices
in $T^0$ in the construction,
and similarly for the other colors.
For an $\ell$-vertex $x$ in $T^0$,
we let $T^0_x$ be the subtree of $T^0$
rooted at $x$ and we let $\vcal_\ell(T^0_x)$
be the set of $\ell$-vertices in $T^0_x$.
Recall from Definition~\ref{def:metric} that
we denote by $\gdist_{T^0}$ the graph
distance on $T^0$. 

Unlike the homogeneous case (see the proof of Claim~\ref{claim:sparsification-hmg}),
observe that it is possible for $\g$-clusters
to ``overlap'' in $T^\#$, that is, pairwise intersect.
We define the overlap formally as follows.
\srevision{See Figures~\ref{fig:overlap}
and~\ref{fig:overlap-detail}
for an illustration.}
\begin{figure}
 	\centering
 	\includegraphics[width = 0.9\textwidth]{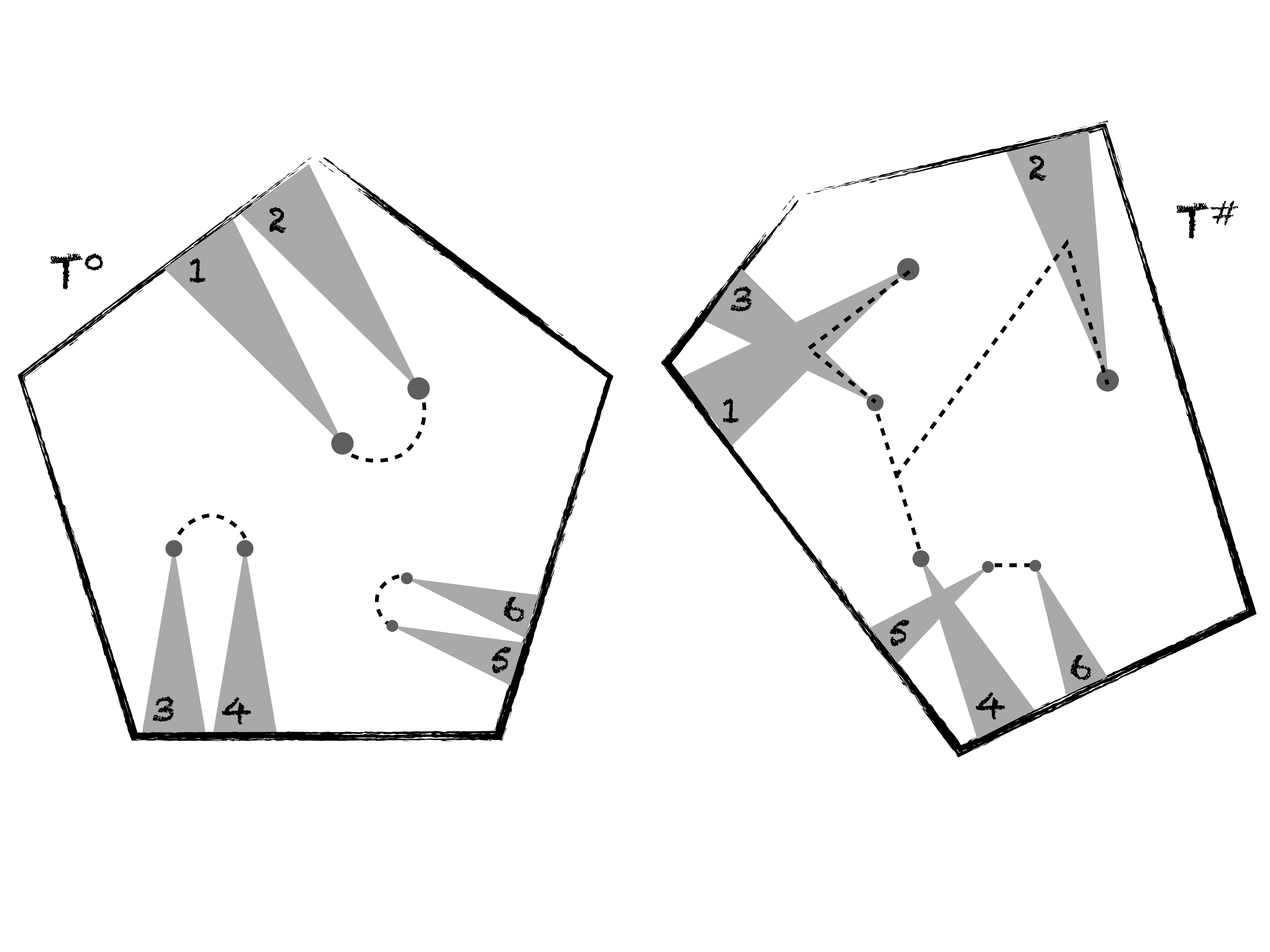}
 	\caption{Test subtrees overlap in $T^\#$. Matching subtrees are labeled with the same number.}\label{fig:overlap}
\end{figure}
\begin{figure}
	\centering
	\includegraphics[width = 0.9\textwidth]{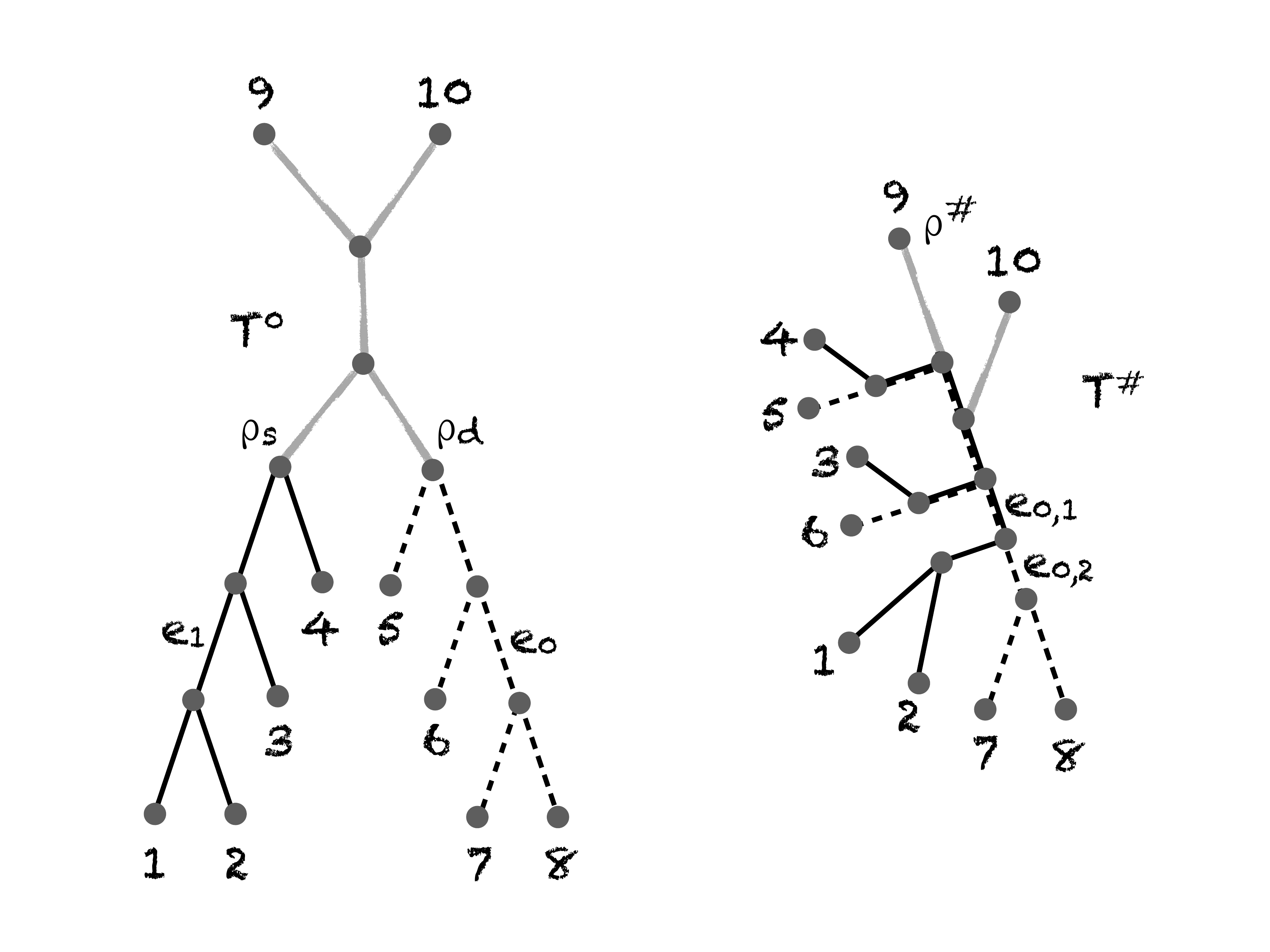}
	\caption{\srevision{A more detailed view of an overlap. The black solid and dashed subtrees are matching in $T^0$ and $T^\#$. The edges that are simultaenously solid and dashed in $T^\#$ are in the overlap. Note that an edge in $T^0$ may correspond to a path in $T^\#$. For instance $e_0$ is matched to the path formed by $e_{0,1}$ and  $e_{0,2}$. On the other hand, an edge in the overlap in $T^\#$ corresponds to several edges in $T^0$.
			For instance, $e_{0,1}$ corresponds to both $e_0$ and $e_1$. The subtrees in $T^0$ are rooted at $\rho_s$ and $\rho_d$ respectively. This rooting is consistent
			with the global rooting of $T^\#$ at vertex $\rho^\#$. }}\label{fig:overlap-detail}
\end{figure}
\begin{definition}[Overlap]
	\label{def:overlap}
	\srevision{
	An edge $e^\#$ in $T^\#$ {\em is in the overlap}
	if it belongs to the matching restricted subtrees in $T^\#$
	(the collection of which we denote by $\{\mcal_i\}_i$)
	of at least two distinct maximal $\g$-clusters
	in $T^0$ (the collection of which we denote by $\{\gcal_i\}_i$, where $\gcal_i$ and $\mcal_i$ are matching).
	The edge $e^\#$ is on a path of some $\mcal_i$
	corresponding to an edge $e^0$ in the matching
	$\gcal_i$.}
	We also say that $e^0$ {\em is in the overlap}.
	Let $\ocal^\#$ (respectively $\ocal^0$)
	denote the overlap, as a set of edges,
	in $T^\#$ (respectively $T^0$).
	We say that a vertex in $T^0$
	is {\em in the overlap}
	if it is adjacent to an edge in $\ocal^0$, and similarly
	for $T^\#$.
\end{definition}
\noindent 
The following bound allows us to work with the overlap in either $T^0$ or $T^\#$, whichever
is more convenient depending on the context.
Notice that it is not immediately clear that $\ocal^0$ and $\ocal^\#$ are roughly the same size because, by definition,
each edge in $\ocal^\#$ corresponds to several edges
in $\ocal^0$. 
\begin{claim}[Overlaps in $T^0$ and $T^\#$]
	\label{claim:overlapRelationship}
	We have
	$$
	\left|\ocal^0\right|  = \Theta\left(\left|\ocal^\#\right|\right),
	$$
	where the constants depend on $f, g, \quantum, \ell$.
\end{claim}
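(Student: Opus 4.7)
The plan is to establish both $|\ocal^\#| \leq C_1 |\ocal^0|$ and $|\ocal^0| \leq C_2 |\ocal^\#|$ by constructing two charging maps between the overlap sets, where $C_1, C_2$ depend only on $f, g, \quantum, \ell$. The two bounds together give the claimed $\Theta$.

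For $|\ocal^\#| \leq (g/f)\,|\ocal^0|$, first I would note that any $e^\# \in \ocal^\#$ lies in some matching subtree $\mcal_i$, hence on a unique realization path $P_i(e^0)$ in $\mcal_i$ corresponding to an edge $e^0 \in \gcal_i$ in the matching. Because $e^\# \in \ocal^\#$ lies on this path, the path meets $\ocal^\#$, and so $e^0 \in \ocal^0$ by Definition~\ref{def:overlap}. This defines a map $\Phi : \ocal^\# \to \ocal^0$ whose fibre over $e^0$ has cardinality at most the length of $P(e^0)$. Since each edge of $T^\#$ has weight at least $f$ while the total path weight equals $w(e^0) \leq g$, that length is at most $g/f$, yielding the inequality.

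For the reverse direction, for each $e^0 \in \ocal^0$ I would pick some $e^\#(e^0) \in P(e^0) \cap \ocal^\#$ (nonempty by Definition~\ref{def:overlap}), producing a map $\Psi : \ocal^0 \to \ocal^\#$. The fibre of $\Psi$ over $e^\#$ has size at most $k(e^\#) := |\{i : e^\# \in \mcal_i\}|$, because inside each $\mcal_i$ the realization paths of distinct edges of $\gcal_i$ are edge-disjoint, so at most one edge of $\gcal_i$ can place $e^\#$ on its realization path. The main obstacle is to obtain a uniform bound $k(e^\#) \leq K(f, g, \quantum, \ell)$. My plan is to combine three structural inputs: (i) each realization path through $e^\#$ has at most $g/f$ edges, so both endpoints lie in the graph-ball $B(e^\#) \subset T^\#$ of radius $g/f$, which has at most $2^{g/f+1}$ vertices since $T^\#$ is binary, and the discretization leaves only finitely many possible weight-sequences along such a path; (ii) the maximal $\g$-clusters $\{\gcal_i\}_i$ are pairwise vertex-disjoint in $T^0$ by maximality, so the matching subtrees $\{\mcal_i\}_i$ have pairwise disjoint labelled leaf sets in $T^\#$; and (iii) each $\mcal_i$ is $(\ell, 1)$-dense, forcing many labelled leaves of $\mcal_i$ near any of its internal vertices. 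The technical heart is converting this finite list of local configurations around $e^\#$, together with the global disjointness of labelled leaves, into a numerical cap on $k(e^\#)$; once this is done, $|\ocal^0| \leq K |\ocal^\#|$ follows, and combining the two inequalities gives $|\ocal^0| = \Theta(|\ocal^\#|)$.
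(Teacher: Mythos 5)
The easy direction of your argument (bounding $|\ocal^\#|$ by a constant times $|\ocal^0|$ through the map $\Phi$, using that each realization path has at most $g/f$ edges) is correct and is essentially what the paper does (the paper uses $g\invquantum$ in place of $g/f$). The hard direction, however, has a genuine gap: the quantity $k(e^\#) = |\{i : e^\# \in \mcal_i\}|$ is \emph{not} uniformly bounded by a constant depending on $f, g, \quantum, \ell$, so the map $\Psi$ you propose has unbounded fibres and cannot deliver $|\ocal^0| \leq K |\ocal^\#|$. The culprit is your item (iii): $(\ell,1)$-density forces $\mcal_i$ to branch quickly, producing many \emph{descendants} within a bounded number of levels, but those descendants need not be leaves. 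The leaves of $\mcal_i$ can be at graph distance equal to the height of $\mcal_i$ (which is unbounded in $n$) from any internal vertex, so you cannot place a leaf of $\mcal_i$ in a bounded ball around $e^\#$. Concretely, if $e^\#$ lies near the centroid of $T^\#$, then arbitrarily many pairwise leaf-disjoint $(\ell,1)$-dense restricted subtrees $\mcal_i$ can straddle $e^\#$: $\mcal_i$ contains $e^\#$ as soon as it has leaves on both sides, and the disjointness in (ii) is no obstruction to there being $\Omega(n)$ such subtrees. The local ball $B(e^\#)$ from (i) contains only boundedly many vertices, but those vertices can simultaneously be realization-path endpoints for many different $\mcal_i$ --- that is precisely what it means for the overlap to be thick there.

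The paper circumvents this by \emph{not} charging every overlap edge. It introduces \emph{overlap-shallow} vertices (Definition~\ref{def:overlap-shallow}), shows by a geometric-series argument that at least half of the overlap vertices in each $\g$-cluster are shallow (Equation~\eqref{eq:factor-of-two}), and then only charges shallow vertices. The reason this works is that a shallow vertex $x^0$ has a \emph{witness} within constant graph distance: a vertex that is either a leaf or outside the overlap. Unlike your realization-path endpoints, each witness belongs to a \emph{unique} maximal $\g$-cluster (leaves do because the maximal $\g$-clusters partition the leaf set; out-of-overlap vertices do by definition of the overlap), so witnesses of distinct clusters must be distinct vertices of $T^\#$, and there are only $O(2^{(C+1)g\invquantum})$ candidate vertices near $\tilde x^\#$. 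That uniqueness-plus-locality is the missing ingredient; it is what replaces your (iii) and yields the bounded multiplicity.
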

\begin{proof}
	\srevision{One direction is straightforward.
		Let $e^\#$ be an edge in $\ocal^\#$.
		There is an edge $e^0$ (in fact at least two)
		in a $\g$-cluster in $T^0$
		whose corresponding path in $T^\#$ includes $e^\#$. See Figure~\ref{fig:overlap-detail} for an illustration.
		Note that $e^0$ has weight at most $g$ and
		therefore can be identified in this way with at most
		$g\invquantum$ edges in $\ocal^\#$.
		That is, for every edge in $\ocal^0$ there are
		at most $g\invquantum$ edges in $\ocal^\#$, or
		$$
		\left|\ocal^0\right| \geq \frac{1}{g\invquantum}
		\left|\ocal^\#\right|.
		$$}
		
	The other direction is trickier because
	each edge in $\ocal^\#$ corresponds, by definition, to {\em several} edges
	in $\ocal^0$. However we claim that, in fact, only a small
	number of maximal $\g$-clusters can ``overlap on a given edge'' in $\ocal^\#$. To prove this we note that, being
	on a tree, most edges in the overlap are
	close to the ``boundary of the overlap,'' that is, they are close to
	vertices outside the overlap. But vertices outside
	the overlap
	necessarily belong to a single $\g$-cluster---which leads
	to a bound on the number of clusters overlapping
	on a given edge in $\ocal^\#$. 

	We first formalize what we mean by ``being close to the boundary of the overlap.'' Root $T^\#$ at an arbitrary vertex $\rt^\#$.
	Let $\gcal$ be a maximal $\g$-cluster in $T^0$
	and re-root $\gcal$ consistently with the rooting
	in $T^\#$, that is, at the vertex corresponding to the root of 
		the matching subtree in $T^\#$.
	See Figure~\ref{fig:overlap-detail} for an illustration. (Observe that there is no global
	rooting in $T^0$ that is consistent with the global rooting
	in $T^\#$. Instead, for this proof, each maximal $\g$-cluster in $T^0$ is rooted separately as explained above.)
	Let $\vcal^\gcal$ and $\wcal^{\gcal}$
	be the vertices in $\gcal$ and the vertices in the overlap
	in $\gcal$ respectively.
	Let $\wcal^\gcal_x$ (respectively $\vcal^\gcal_x$) be the vertices in $\wcal^\gcal$ (respectively $\vcal^\gcal$)
	below vertex $x$ (including $x$).
	\begin{definition}[Overlap-shallow vertices]
	\label{def:overlap-shallow}	
	We say that $x$
	is {\em overlap-shallow (with parameter $\beta$)} if
	\begin{equation}
	\label{eq:shallow-def}
	\sum_{y \in \wcal^\gcal_x}
	2^{-\frac{\gdist_{T^0}(x,y)}{2}} < \frac{\beta}{1 - 1/\sqrt{2}}.
	\end{equation}
	We let $\scal^\gcal$ be the set of overlap-shallow
	vertices in $\gcal$.	
	\end{definition}
	\noindent To see why this condition characterizes
	shallowness in the overlap, let $C > 0$ be a constant and say that
	$y$ is
	a {\em witness} for $x$ if 1) $y \in (\vcal^\gcal \setminus \wcal^{\gcal}) \cup (\vcal^\gcal \cap L)$, that is, $y$ is in $\gcal$ outside the overlap or is a leaf in $\gcal$, and if 2) $y$ is at 
	graph distance at most $C$ below $x$.
	Because a
	$\g$-cluster is $(\ell,1)$-dense (that is, nearly bifurcating),
	the sum 
	$$
	\sum_{y \in \vcal^\gcal_x}
	\sqrt{\frac{1}{2^{\gdist_{T^0}(x,y)}}},
	$$ 
	increases unboundedly
	as $y$ moves away from $x$---until the leaves are reached.
	Thus there is a $C$ depending only on $\ell$ and $\beta$
	such that, if $x$ is
	overlap-shallow, a witness is guaranteed to exist.
	In other words, $x$ is close to a vertex outside of the overlap or to a leaf. 
	If $y^\#$ is the vertex in $T^\#$ corresponding to
	witness $y$, we say that $y^\#$
	is a {\em $\#$-witness} for $x$. 
	
	We proceed in two steps.
	For the rest of this claim, we let $\beta = 2$.
	(We will need the same definition with a different
	value of $\beta$ in Section~\ref{section:largeOverlap}.)
	Our starting point is the bound
	\begin{equation}
	\label{eq:oo-wg}
	\left|\ocal^0\right|
	\leq \sum_{\gcal} \left|\wcal^{\gcal}\right|,
	\end{equation}
	where the sum runs through all maximal
	$\g$-clusters $\gcal$ in $T^0$. Indeed, the overlap
	forms a sub-forest of $T^0$ and, therefore, it has more vertices
	than edges.
	\begin{enumerate}
		\item {\it A large fraction of vertices in the overlap are shallow.} 
		We first relate $|\wcal^{\gcal}|$ and
		$|\scal^\gcal|$.
		Summing the criterion in~\eqref{eq:shallow-def}
		over all vertices in a maximal $\g$-cluster $\gcal$, we get
		\begin{equation}
		\label{eq:factor-of-two-pre-pre}		
		\sum_{x \in \wcal^\gcal}
		\left[
		\sum_{y \in \wcal^\gcal_x}
		2^{-\frac{\gdist_{T^0}(x,y)}{2}}
		\right]
		= \sum_{y \in \wcal^\gcal}
		\left[
		\sum_{x : y \in \wcal^\gcal_x}
		2^{-\frac{\gdist_{T^0}(x,y)}{2}}		
		\right],
		\end{equation}
		by interchanging the sum. Note that the expression in square brackets on the r.h.s.~is a sum over
		the overlap on the path from $y$ towards the root of $\gcal$. Because the sum is geometric, we obtain the bound
		\begin{equation}
		\label{eq:factor-of-two-pre}
		\sum_{y \in \wcal^\gcal}
		\left[
		\sum_{x : y \in \wcal^\gcal_x}
		2^{-\frac{\gdist_{T^0}(x,y)}{2}}
		\right]
		\leq 
		\sum_{y \in \wcal^\gcal} 
		\left[
		\frac{1}{1 - 1/\sqrt{2}}
		\right]
		= \frac{\left|\wcal^\gcal\right|}{1 - 1/\sqrt{2}},
		\end{equation}
		where we used that $\sum_{z \geq 0} (1/\sqrt{2})^{z} = (1 - 1/\sqrt{2})^{-1}$.
		It follows, by contradiction, that 
		\begin{equation}
		\label{eq:factor-of-two}
		|\scal^\gcal| > \frac{1}{2} |\wcal^\gcal|.
		\end{equation}
		Indeed, if that were not the case, that is, 
		if $|\wcal^\gcal \setminus \scal^\gcal| > \frac{1}{2} |\wcal^\gcal|$, then the
		sum on the l.h.s.~of~\eqref{eq:factor-of-two-pre-pre} would be $> \frac{1}{2} |\wcal^\gcal| \frac{2}{1 - 1/\sqrt{2}}$ by~\eqref{eq:shallow-def}, 
		contradicting~\eqref{eq:factor-of-two-pre}.
		Combining~\eqref{eq:oo-wg} and~\eqref{eq:factor-of-two}, we get the bound
		\begin{equation}
		\label{eq:oo-wg-2}
		\left|\ocal^0\right|
		< 2 \sum_{\gcal} \left|\scal^{\gcal}\right|.
		\end{equation}
		
		\item {\it Overlap-shallow vertices in $T^0$
			can be mapped to vertices in the overlap in
			$T^\#$ with little duplication.} 
		It remains to relate $|\scal^\gcal|$ and $|\ocal^\#|$. This step is delicate 
		because the definition
		of the overlap (Definition~\ref{def:overlap}) {\em differs somewhat in $T^0$ and $T^\#$}. 
		Note, in particular,
		that a vertex $x^0$ in the overlap in $T^0$ is matched to a vertex $x^\#$ in $T^\#$ {\em which may not itself be in the overlap.} Instead, all we can say is that $x^0$ is incident with an edge in $T^0$
		whose corresponding path in $T^\#$ contains
		a vertex $\tilde{x}^\#$ in the overlap. 
		To each vertex $x^0$ in $\cup_\gcal \scal^\gcal$,
		associate a vertex $\tilde{x}^\#$ in the
		overlap in $T^\#$ as we just described, with the following extra condition: two vertices $x^0 \neq z^0$ in the same $\g$-cluster $\gcal$ must be associated 
		with {\em distinct} vertices $\tilde{x}^0 \neq \tilde{z}^0$ in the overlap in $T^\#$. This is always
		possible because, if $\tilde{x}^0, \tilde{z}^0$ 
		are incident with the same edge, we can associate to them distinct vertices from the overlap on the corresponding path in $T^\#$ (say, the closest
		in graph distance to the matching vertex). 
		Let
		$\scal^\#$ be the set of all these $\tilde{x}^\#$s
		and
		observe that 
		\begin{equation}
		\label{eq:ssharp-osharp}
		|\scal^\#|
		\leq
		2 |\ocal^\#|.
		\end{equation}
		Note however that we cannot directly bound the size of $\cup_\gcal \scal^\gcal$ with the size of $\scal^\#$ because some vertices in $\scal^\#$ may be associated
		with vertices in {\em different} $\g$-clusters in $T^0$. Let $\tilde{x}^\# \in \scal^\#$ and let $x^0_1, \ldots, x^0_h$ be the vertices
		in $\cup_{\gcal}\scal^\gcal$ to which it is
		associated.
		What we need is to bound $h$. 
		This will follow from a number of observations.
		See Figure~\ref{fig:overlap-size}
		\begin{figure}
			\centering
			\includegraphics[width = 0.9\textwidth]{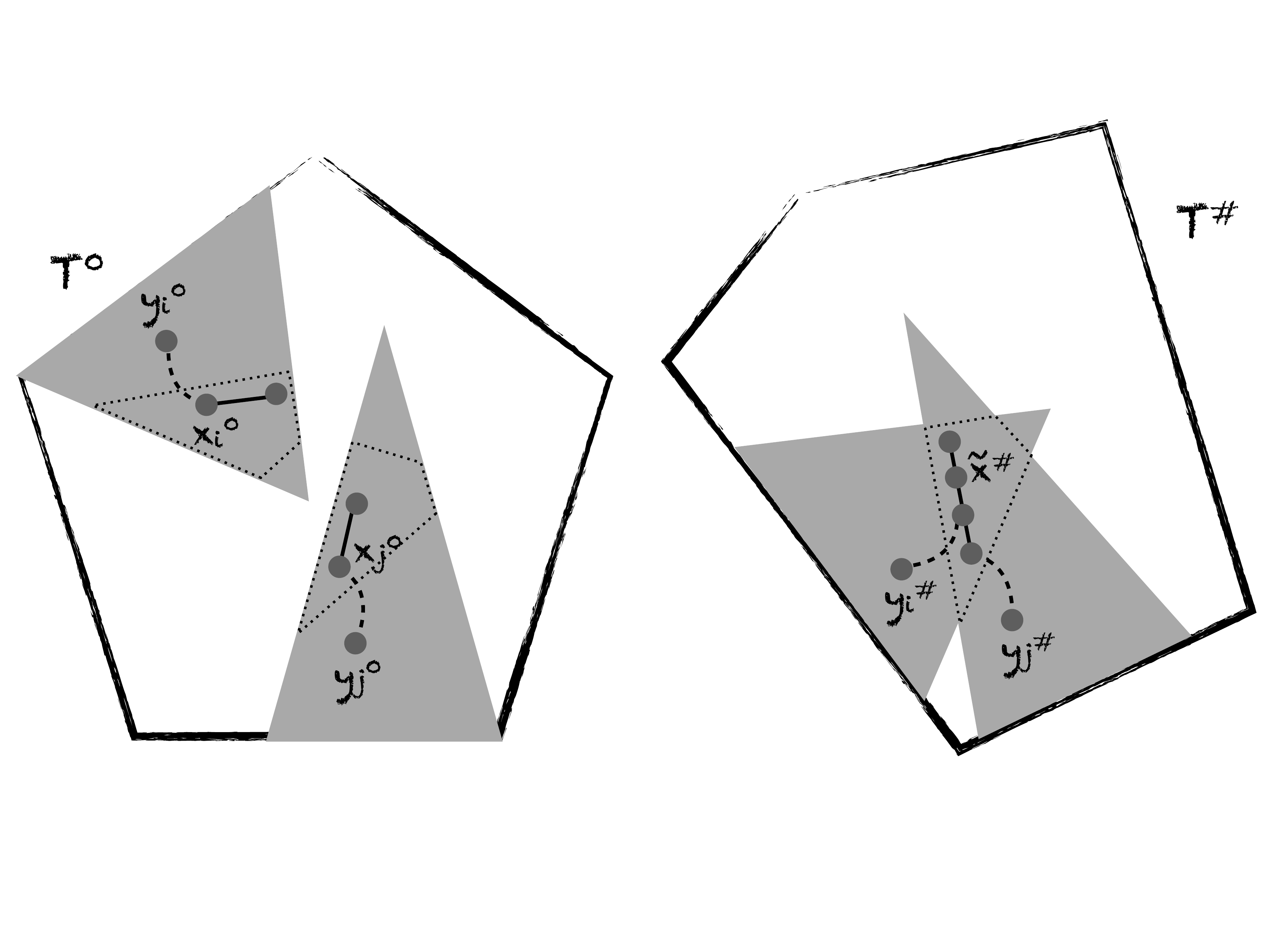}
			\caption{\srevision{
					Witnesses outside the overlap.
					Here $x_i^0$ and $x_j^0$ are associated
					to $\tilde{x}^\#$. Their respective $\#$-witnesses are $y_i^\#$ and $y_j^\#$. The dotted lines surround the
					overlap. }}\label{fig:overlap-size}
		\end{figure}	
		\begin{enumerate}
			\item \label{item:x0i-distinct} By the construction above, each 
			$x^0_i$ belongs to a {\em distinct} maximal $\g$-cluster $\gcal_i$. 
			
			\item Let $y_i^0$ and $y^\#_i$ be 
			a witness and 
			$\#$-witness for $x^0_i$ respectively. The {\em existence} of such witnesses was established immediately after Definition~\ref{def:overlap-shallow}. 
			
			\item \label{item:ysharpi-single} Each $y^\#_i$ belongs to a {\em single} $\g$-cluster, that is, $\gcal_i$. 
			Indeed, by definition, either $y^\#_i$
			is outside the overlap in $\gcal_i$, or it is a leaf in $\gcal_i$.
			(Because of the way the $\g$-clusters
			are constructed in $T^0$, each leaf 
			belongs to one maximal $\g$-cluster.)
			
			\item 
			Combining (a) and (c), $y_1^\#, \ldots, y^\#_h$ must be {\em distinct}
				vertices in $T^\#$.
				
			\item Because $x_i^0$ and $y_i^0$
			are at graph distance $C$ and each edge in $T^\#$
			corresponds to at most $g\invquantum$ edges in $T^0$,
			the {\em graph distance} between $\tilde{x}^\#$ and $y_i^\#$
			is at most $(C+1)g\invquantum$. Here the $+1$
			accounts for the fact that, as explained above, $x_i^0$ and $\tilde{x}^\#$ may not be matching.
			
			\item There are {\em at most
			$3(2^{(C+1)g \invquantum+1}-1) + 1$ vertices} in $T^\#$ at graph distance $(C+1) g \invquantum$ from $\tilde{x}^\#$. That follows from the 
			fact that an $h$-level (counting the root) complete binary tree
			has $2^{h+1} - 1$ vertices. 
			
			\item Combining (d), (e) and (f), we have established 
			that $h \leq 3(2^{(C+1) g \invquantum+1}-1)$. 
			
		\end{enumerate}
		Thus, using~\eqref{eq:ssharp-osharp},
		\begin{equation}
		\label{eq:sg-osharp}
		\sum_{\gcal}|\scal^\gcal|
		\leq |\scal^\#| \cdot 3(2^{(C+1) g \invquantum+1}-1)
		\leq 2 |\ocal^\#| \cdot 3 (2^{(C+1) g \invquantum+1}-1).
		\end{equation}
		
	\end{enumerate}
	It remains to combine~\eqref{eq:oo-wg-2}
	and~\eqref{eq:sg-osharp} to obtain
	$$
	|\ocal^0|
	\leq 
	12 (2^{(C+1) g \invquantum+1}-1) |\ocal^\#|.
	$$
	That concludes the proof.
\end{proof}

We now relate the blow-up distance between $T^0$ and
$T^\#$ to the number of $\r$-vertices, from
which tests can potentially be constructed,
and the size of the overlap.
We first bound the number
of yellow vertices.
\begin{claim}[Bounding the number of yellow vertices]
	\label{claim:bounding-yellow}
	We have
	$$
	\#\y \leq \#\r.
	$$
\end{claim}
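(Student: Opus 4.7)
The plan is to follow the charging argument of Claim~\ref{claim:bounding-yellow-hmg} almost verbatim: the only structural fact that the proof uses is that each $\y$-vertex has at least two non-$\g$ $\ell$-children, and this property is baked into the general-tree coloring rules just as in the homogeneous case. Intuitively the $\y$-vertices form the internal vertices of a forest of multifurcating trees whose leaves are $\r$-vertices, and the claim is the standard ``leaves outnumber (or equal) internal vertices'' statement for such forests, formalized through a geometric weighting.

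Concretely, I would prove by induction on $\ell$-level, proceeding bottom-up, that for every non-$\g$ $\ell$-vertex $x$,
\begin{equation*}
\sum_{y \in \vcal_\ell(T^0_x)} 2^{-\gdist_{T^0}(x,y)/\ell}\, \ind\{y \text{ is } \r\} \geq 1.
\end{equation*}
If $x$ is itself $\r$, the term $y=x$ already contributes $1$. If $x$ is $\y$, then by the coloring rule at least two of its $\ell$-children are non-$\g$; each such child $y_i$ sits at graph distance exactly $\ell$ from $x$ (since leaves are initialized as $\g$ and therefore cannot appear among the non-$\g$ $\ell$-children), so applying the inductive hypothesis at each $y_i$ gives a contribution of at least $2 \cdot 2^{-1} \cdot 1 = 1$. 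Next, for each $\r$-vertex $y$, the $\ell$-ancestors of $y$ in $T^0$ contain at most one vertex per $\ell$-level, so
\begin{equation*}
\sum_{x \,:\, y \in \vcal_\ell(T^0_x)} 2^{-\gdist_{T^0}(x,y)/\ell}\, \ind\{x \text{ is } \y\} \leq \sum_{k=1}^{\infty} 2^{-k} = 1,
\end{equation*}
where the index starts at $k = 1$ because $y$ is $\r$, not $\y$. Summing the first inequality over $\y$-vertices, exchanging the order of summation, and applying the second inequality yields $\#\y \leq \#\r$.

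The only point one might worry could break in the general-tree setting is the weighted counting convention for leaf $\ell$-children (where a leaf at graph distance $d < \ell$ counts as $2^{\ell - d}$). However, since leaves are always colored $\g$, a non-$\g$ $\ell$-child is necessarily a non-leaf $\ell$-vertex at graph distance exactly $\ell$, and the weighted count of non-$\g$ $\ell$-children coincides with the ordinary count. Consequently the inductive step needs no modification and I do not anticipate any serious obstacle; the argument is a clean transcription of the homogeneous one.
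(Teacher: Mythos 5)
Your proof is correct and follows essentially the same charging argument as the paper, including the same two inequalities and the same summation/exchange step. The only (minor) refinement is that you state the inductive inequality for all non-$\g$ $\ell$-vertices rather than just $\y$-vertices, which makes the induction step a bit cleaner, and you explicitly dispose of the weighted-leaf-counting convention; both points are consistent with the paper's argument.
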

\begin{proof}
	From our construction, each $\y$-vertex in $T^0$ has at least two non-$\g$-children.
	Hence, intuitively,
	one can think of the $\y$-vertices as forming the internal vertices of
	a forest of multifurcating trees whose leaves are $\r$-vertices.
	The inequality follows.
	
	Formally, if $x$ is a $\y$-vertex, from the observation
	above we have
	\begin{equation}\label{eq:bounding-yellow-1}
	\sum_{y \in \vcal_\ell(T^0_x)}
	2^{-\frac{\gdist_{T^0}(x,y)}{\ell}} \ind\{\text{$y$ is a $\r$-vertex}
	\} \geq 1,
	\end{equation}
	by induction on the $\ell$-levels starting with
	the level farthest away from the root. Similarly
	if $y$ is an $\r$-vertex,
	we have
	\begin{equation}\label{eq:bounding-yellow-2}
	\sum_{x : y \in \vcal_\ell(T^0_x)}
	2^{-\frac{\gdist_{T^0}(x,y)}{\ell}} \ind\{\text{$x$ is a $\y$-vertex}\} < 1,
	\end{equation}
	where the inequality follows from the fact that
	the sum is over a path from $x$ to the root of $T^0$.
	Summing~\eqref{eq:bounding-yellow-1} over
	$\y$-vertices $x$ and~\eqref{eq:bounding-yellow-2} over
	$\r$-vertices $y$ gives the same
	quantity on the LHS, so that the RHS gives the inequality. 
\end{proof}
\begin{claim}[Relating blowup, $\#\r$, and overlap]
	\label{claim:blowup-overlap}
	There is a constant $0 < C_\ocal < +\infty$,
	depending on $\ell$, $g$ and $\invquantum$,
	such that
	$$
	\blowup\left(T^0,T^\#\right) \leq C_\ocal \left(\#\r + \left|\ocal^\#\right|\right).
	$$
\end{claim}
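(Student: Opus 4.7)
The plan is to exhibit an explicit blowup operation transforming $T^0$ into $T^\#$ that uses $O(\#\r + |\ocal^\#|)$ edges, with hidden constants depending only on $\ell$, $g$, $\invquantum$. The strategy exploits the fact that each maximal $\g$-cluster $\gcal_i$ in $T^0$ is metric-matching with its counterpart $\mcal_i$ in $T^\#$, so the $\gcal_i$s serve as essentially rigid building blocks that need only be cut apart and reconnected; the two places where genuine surgery is required are at the non-green ``seams'' between clusters in $T^0$ and at the overlap where distinct $\mcal_i$s share edges in $T^\#$.

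First, I would bound the number $M$ of maximal $\g$-clusters in $T^0$. Each such cluster is rooted at a $\g$-vertex whose parent $\ell$-vertex is either the root of $T^0$ or a non-$\g$ $\ell$-vertex, and each non-$\g$ $\ell$-vertex has at most $2^\ell$ $\ell$-children. Using Claim~\ref{claim:bounding-yellow}, which gives $\#\y \leq \#\r$, this yields $M \leq 2^\ell (\#\r + \#\y) + 1 = O(\#\r)$.

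Next, I would designate the set $S \subseteq E^0$ of edges to remove from $T^0$ as the union of (a) all edges of $T^0$ not contained in any maximal $\g$-cluster (i.e., edges whose $\ell$-level endpoints include a non-$\g$ $\ell$-vertex, together with the connecting edges on their $\ell$-spans), (b) one ``top border'' edge for each maximal $\g$-cluster separating it from its parent structure, and (c) every edge of $\ocal^0$. Contribution (a) has size $O(2^\ell \ell \#\r)$ since each non-$\g$ $\ell$-vertex is surrounded by at most $O(2^\ell \ell)$ edges of $T^0$ and there are $O(\#\r)$ of them; (b) is at most $M = O(\#\r)$ by the first step; and contribution (c) is $O(|\ocal^\#|)$ by Claim~\ref{claim:overlapRelationship}. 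Adding $|S|$ new weighted edges whose placement and weights are dictated by $T^\#$ then reassembles the remaining forest into a tree isomorphic to $T^\#$, proving $\blowup(T^0,T^\#) \leq |S| = O(\#\r + |\ocal^\#|)$, which gives the stated inequality with $C_\ocal$ depending on $\ell$, $g$, and $\invquantum$.

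The main obstacle will be verifying that the reassembly step is well-defined, which amounts to checking that, once the edges in $S$ are removed from $T^0$, the resulting forest consists of pieces that can be identified injectively with portions of $T^\#$. The delicate case is the overlap: when $\mcal_i$ and $\mcal_j$ share an edge $e^\#$ in $T^\#$, the corresponding edges in $\gcal_i$ and $\gcal_j$ are distinct in $T^0$, and removing them splits each $\gcal_i$ into pieces, each of which metric-matches a unique sub-portion of $\mcal_i$ outside the shared part. Claim~\ref{claim:overlapRelationship} is exactly what converts the count $|\ocal^0|$ needed for the cut in $T^0$ into the bound in terms of $|\ocal^\#|$ appearing in the statement.
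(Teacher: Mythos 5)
Your proposal follows the paper's high-level strategy closely: decompose $T^0$ into maximal $\g$-clusters, cut the seams between clusters and the overlap edges (bounding these cuts via Claim~\ref{claim:bounding-yellow} and Claim~\ref{claim:overlapRelationship}), and argue the remaining forest reassembles into $T^\#$. Your counting of the seam and overlap removals is correct in order.

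The gap lies in the reassembly step you flag but do not close, and the obstacle is not the one you name. The delicate point is not primarily the overlap: it is that a $\g$-cluster and its counterpart in $T^\#$ are only \emph{metric}-matching (Definition~\ref{def:matching}), so a single edge of a $\g$-cluster of weight $w$ may correspond to a \emph{path} of up to $g\invquantum$ edges in $T^\#$, whose interior vertices are genuine degree-$3$ vertices of $T^\#$ branching off to edges outside the cluster. If such a backbone edge of $T^0$ is kept, the result of the blowup cannot be isomorphic to $T^\#$ as a weighted tree, no matter which new edges are added. One must therefore also remove those backbone edges that correspond to length-$\geq 2$ paths in $T^\#$; the paper bounds their number by observing that the number of $T^\#$ edges not on the backbone equals the number of $T^0$ edges already removed (both trees have the same edge count) and that each such non-backbone $T^\#$ edge can be adjacent to at most two long backbone edges. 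You also omit the intermediate cleanup of removing edges adjacent to degree-$2$ vertices produced by the seam and overlap cuts, which is needed to avoid stranding $T^0$-vertices with no counterpart in $T^\#$. Both extra rounds of removals cost only a constant multiple of the original count, so the claimed bound does hold, but your sketch as written does not establish it.
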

\begin{proof}
Our goal is to display a blowup from $T^0$ to $T^\#$ 
whose number of edges
is bounded by a constant times the number of $\r$-vertices
plus the size of the overlap in $T^\#$.
We proceed in two steps:
	
	\begin{itemize}
		\item  {\bf Edge removals.}		
		First we remove all edges in $T^0$ that are not
		in a maximal $\g$-cluster. To count how many 
		such edges there are,
		we observe that there are at most
		$2^{\ell+1} - 2$ edges between a non-$\g$-vertex
		and its $\ell$-children. The edge above
		each non-$\g$-vertex is also removed if its parent $\ell$-vertex
		is colored $\g$. Hence, 
		we need to remove at most
		$
		(2^{\ell+1} - 1) (\#\r + \#\y)
		$
		edges. We also remove all edges in the overlap, which
		adds at most an extra $|\ocal^0|$ edges to the total of those removed.
		
		\smallskip
		
		Next we remove every edge adjacent to a degree-2 vertex produced by the removals above. Each edge removed above
		produces at most $4$ such edges, bringing the total number of edges removed
		so far to at most
		\begin{equation}
		\label{eq:blowup-overlap-aux1}
		5[(2^{\ell+1} - 1)(\#\r + \#\y) + |\ocal^0|] 
		\leq 5\cdot 2^{\ell+1}(\#\r+ |\ocal^0|),
		\end{equation}
		where we used Claim~\ref{claim:bounding-yellow}.
		
		\smallskip
		
		We call what is left the backbone.
		Because the backbone is a subset of the
		$\g$-clusters, every vertex of the backbone
		corresponds to a (non-extra) vertex in $T^\#$.
		(Recall that {\em extra} vertices were defined in Definition~\ref{def:matching}.) Every edge in the backbone, on the other hand, corresponds to
		a path in $T^\#$ with at most $g\invquantum$
		edges. Because $T^0$ and $T^\#$ have the same overall
		number of edges, the number of edges in $T^\#$ that do not lie on the backbone is at most $5\cdot 2^{\ell+1}(\#\r+ |\ocal^0|)$ by~\eqref{eq:blowup-overlap-aux1}. Each such edge
		may be incident (in $T^\#$) to at most $2$ edges in the backbone
		that are a path of length at least $2$ in $T^\#$.
		We also remove all such edges from the backbone,
		finally bringing the total of edges removed to
		at most $15\cdot 2^{\ell+1}(\#\r+ |\ocal^0|)$.
		
		\item {\bf Edge additions.} All edges and vertices left after the edge removals above correspond to (non-path) edges and (non-extra) vertices of $T^\#$. 
		Because $T^0$ and $T^\#$ have the same overall
		number of edges, the number of edge additions needed to obtain $T^\#$ at this point is at most $15\cdot 2^{\ell+1}(\#\r+ |\ocal^0|)$.
	\end{itemize}
	From Claim~\ref{claim:overlapRelationship},
	the constant $C_\ocal$ in the statement can
	be taken to be a function of $\ell$, $g$ and $\invquantum$.
\end{proof}


Our next goal is to construct batteries with
a number of tests scaling linearly in the
blowup distance between $T^0$ and $T^\#$.
Using Claim~\ref{claim:blowup-overlap}, we first divide the analysis into two cases depending on the
values of $\#\r$ and $|\ocal^\#|$.
\begin{itemize}
	\item {\bf Large overlap.}
	If
	\begin{equation}\label{eq:largeOverlap}
	|\ocal^\#| \geq \frac{1}{10} \frac{\blowup(T^0,T^\#)}{C_\ocal},
	\end{equation}
	we say that we are in the {\em large overlap} case.
	We will show in Section~\ref{section:largeOverlap}
	that a linear (in the blowup distance)
	number of tests can be built ``around
	the periphery of the overlap.''
	The choice of the factor $1/10$ will be justified
	in Claim~\ref{claim:recoloring-case1}.

	\item {\bf Many $\r$-vertices.} 
	If, instead,
	\begin{equation}\label{eq:smallOverlap}
	|\ocal^\#| < \frac{1}{10} \frac{\blowup(T^0,T^\#)}{C_\ocal},
	\end{equation}
	we say that we are in the {\em many-$\r$} case.
	To justify the name we note that by Claim~\ref{claim:blowup-overlap}, if~\eqref{eq:smallOverlap} holds, then
	\begin{equation}
	\label{eq:smallOverlap-reds}
	\#\r \geq \frac{9}{10}\frac{\blowup(T^0,T^\#)}{C_\ocal}.
	\end{equation}
	In that case, we proceed similarly to the homogeneous case
	and construct a distinguishing test
	for a linear fraction of $\r$-vertices.
	See Section~\ref{section:manyr}.
	
\end{itemize}

\subsection{Constructing a battery of tests: Many-$\r$ case}
\label{section:manyr}

We now construct a battery of tests in the many-$\r$
case. This case is similar to the homogeneous case
although many new difficulties arise.
The basic idea remains the same: each $\r$-vertex has
two $\g$-children which satisfy many of the requirements
of a battery and therefore can potentially be used as a test pair.
In particular, they are the roots of dense subtrees that are matching with their corresponding restricted subtrees
in $T^\#$ and their evolutionary distance differs
in $T^0$ and $T^\#$. Note that, in the
many-$\r$ case, we also have a
number of $\r$-vertices
that scales linearly in the blowup distance.
Compared to the homogeneous case, however, there
are new issues to address to construct
a battery of tests, mainly the possibility of overlapping
$\g$-clusters and of non-co-hanging pairs
in $T^\#$.

In this section, $T^0$ and $T^\#$ are fixed. To simplify
notation, we let $\Delta = \blowup\left(T^0,T^\#\right)$.
Fix $\wp = 1$. Choose $\ell = \ell(g,\wp)$ as
in Proposition~\ref{prop:distinguishing}.
Then take
\begin{equation}
\label{eq:def-gamma}
\Gamma = (6 + 2\invquantum g) \ell,
\end{equation}
and set
$\gamma_t \geq \Gamma$, a multiple of $\ell$, and $C$ as
in Proposition~\ref{prop:distinguishing}.

\paragraph{Choosing non-overlapping $\g$-clusters.}
To satisfy the requirements of the battery, 
the test subtrees must be non-intersecting
in $T^\#$. (By construction, the test subtrees are non-intersecting in $T^0$.)
We proceed by showing that sufficiently many non-overlapping
$\g$-clusters can be found. 
For this purpose, we use a re-coloring procedure.
Re-color $\b$ (for black) those $\r$-vertices
that have at least one $\g$-child who is the root of a
$\g$-cluster that intersects with another $\g$-cluster in $T^\#$.
\srevision{(This recoloring procedure is performed only once.)}
Intuitively, if too many $\r$-vertices are lost in this
recoloring step, then the overlap must be large. That cannot be the case by~\eqref{eq:smallOverlap}. Indeed,
we prove the following.
\begin{claim}[Re-coloring]
	\label{claim:recoloring-case1}
	In the many-$\r$ case,
	after re-coloring, we have
	$$
	\#\r \geq \frac{\Delta}{2C_\ocal},
	$$
	\srevision{where $C_\ocal$,
		which depends on $\ell$, $g$ and $\invquantum$, was defined in
		Claim~\ref{claim:blowup-overlap}.}
\end{claim}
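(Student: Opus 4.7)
The plan is to show that only a small fraction of the original $\r$-vertices are demoted to $\b$, and then to combine this with the many-$\r$ lower bound~\eqref{eq:smallOverlap-reds} on $\#\r$ before recoloring. More precisely, I would argue that the number of recolored $\r$-vertices is at most $C_1\,|\ocal^\#|$ for some constant $C_1 = C_1(\ell, g, \invquantum)$, and then invoke the many-$\r$ assumption~\eqref{eq:smallOverlap}.

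The first step is a simple counting argument. Each recolored $\r$-vertex $x$ has, by the recoloring rule, at least one $\g$-child $y_x$ that is the root of a maximal $\g$-cluster $\gcal_{y_x}$ whose matching restricted subtree in $T^\#$ intersects that of some other maximal $\g$-cluster. By Definition~\ref{def:overlap}, this intersection forces $\gcal_{y_x}$ to contain at least one edge of $\ocal^0$. Distinct recolored $\r$-vertices have distinct $\g$-children (since every vertex has a unique parent), and hence correspond to distinct maximal $\g$-clusters. Because the maximal $\g$-clusters in $T^0$ are pairwise edge-disjoint by construction, each edge of $\ocal^0$ lies in at most one of them, so the number of recolored $\r$-vertices is bounded by $|\ocal^0|$. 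Applying Claim~\ref{claim:overlapRelationship} converts this into a bound of the form $C_1\,|\ocal^\#|$.

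The second step combines the bounds. By~\eqref{eq:smallOverlap}, the recoloring loss is strictly less than $C_1 \Delta/(10 C_\ocal)$, while by~\eqref{eq:smallOverlap-reds} the initial number of $\r$-vertices is at least $9\Delta/(10 C_\ocal)$. Hence the number of $\r$-vertices surviving the recoloring is at least $(9 - C_1)\Delta/(10 C_\ocal)$. Since the constant $C_\ocal$ from Claim~\ref{claim:blowup-overlap} is specified only up to its dependence on $\ell, g, \invquantum$, I would enlarge it if necessary (e.g., replace it by $\max\{C_\ocal, 5 C_1\}$ throughout) so that $(9 - C_1)/10 \geq 1/2$, yielding the claimed bound $\#\r \geq \Delta/(2 C_\ocal)$.

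The main---and arguably only---obstacle is the bookkeeping of constants: one must verify that the factor $1/10$ fixed in~\eqref{eq:largeOverlap}--\eqref{eq:smallOverlap}, together with the freedom to enlarge $C_\ocal$, is enough to absorb the overlap-conversion constant $C_1$ coming from Claim~\ref{claim:overlapRelationship}. No new combinatorial input is required beyond the $\g$-cluster structure and the overlap machinery already established.
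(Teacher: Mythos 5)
Your counting step is fine as far as it goes: the map from recolored $\r$-vertices to the $\g$-child that witnesses the recoloring, and then to the corresponding maximal $\g$-cluster, is injective; each such cluster contains at least one edge of $\ocal^0$ (by Definition~\ref{def:overlap}); and the maximal $\g$-clusters are edge-disjoint in $T^0$. This gives $\#\text{recolored} \le |\ocal^0|$. But the constant bookkeeping that you flag as "the main---and arguably only---obstacle" is in fact a genuine gap, and the fix you propose does not work. The constant $C_1$ from Claim~\ref{claim:overlapRelationship}, namely $|\ocal^0| \le C_1 |\ocal^\#|$ with $C_1 = 12\bigl(2^{(C+1)g\invquantum+1}-1\bigr)$, is much larger than $4$, so $(9-C_1)/10 < 1/2$ and can even be negative. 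Enlarging $C_\ocal$ is useless here: replacing $C_\ocal$ by any $C_\ocal' > C_\ocal$ rescales $|\ocal^\#| < \Delta/(10C_\ocal')$ and $\#\r \ge 9\Delta/(10C_\ocal')$ by the same factor, so the surviving count is still $(9-C_1)\Delta/(10 C_\ocal')$, and the condition $(9-C_1)/10 \ge 1/2$ is independent of $C_\ocal'$. The only way to absorb $C_1$ this way would be to shrink the split constant $1/10$ in~\eqref{eq:largeOverlap}--\eqref{eq:smallOverlap} to something like $1/(2(1+C_1))$, which you did not propose (and which would propagate into the large-overlap battery count).

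The paper avoids this by bounding the recoloring loss \emph{directly in $T^\#$} with the sharp constant $4$: each affected maximal $\g$-cluster is charged to the vertex where a shortest path from one of its leaves in $T^\#$ first enters $\ocal^\#$; since $T^\#$ is bifurcating and that vertex has an incident edge in $\ocal^\#$ and one in $\mcal_i$ outside the overlap, it can be used by at most two clusters; the number of vertices in $\ocal^\#$ is at most $2|\ocal^\#|$; so the loss is at most $4|\ocal^\#| < \frac{4}{10}\Delta/C_\ocal$, which combined with~\eqref{eq:smallOverlap-reds} gives exactly $\Delta/(2C_\ocal)$. The factor $1/10$ was chosen to match this $4$, not the much larger $C_1$ arising from a detour through $\ocal^0$. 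If you want to keep your route through $|\ocal^0|$, you would have to simultaneously re-tune the large/small-overlap threshold and redo the large-overlap case; the paper's direct $T^\#$-side bound is the cleaner path.
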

\begin{proof}
	Assume maximal $\g$-cluster $\gcal_i$
	intersects with a distinct maximal $\g$-cluster in $T^\#$
	and let $\mcal_i$
	be the
	matching subtree corresponding to $\gcal_i$
	in $T^\#$.
	Consider a shortest path in graph distance 
	between a leaf in $\mcal_i$ and the overlap
	in $T^\#$. Let $v_i$
	be the vertex in $T^\#$ where this path enters
	the overlap. \srevision{Because 1) $T^\#$ is bifurcating,
	2) at least one edge adjacent to $v_i$ must be in
	$\ocal^\#$, and 3) at least one edge adjacent to
	$v_i$ must be in $\mcal_i$ outside the overlap, 
	it follows that $v_i$ can arise as
	the entrance vertex to the overlap
	for at most two maximal $\g$-clusters.
	Hence, each maximal $\g$-clusters
	intersecting with another maximal $\g$-cluster
	is associated an entrance vertex in the overlap that can be used
	at most twice. So
	the number of such clusters is bounded by
	$$
	2 \cdot 2\left|\ocal^\#\right| \leq 4\frac{1}{10} \frac{\Delta}{C_\ocal},
	$$
	where we used~\eqref{eq:smallOverlap} and
	where we took into account that the number of vertices
	in the overlap is at most twice the number of edges
	in the overlap. Moreover, observe that each such
	cluster contributes to the recoloring of at most
	one $\r$-vertex.}
	That implies that the number of recolored
	$\ell$-vertices is at most $4\frac{1}{10} \frac{\Delta}{C_\ocal}$.
	After recoloring we therefore have
	$$
	\#\r \geq \frac{9}{10}\frac{\Delta}{C_\ocal}
	- 4\frac{1}{10} \frac{\Delta}{C_\ocal}
	= \frac{\Delta}{2 C_\ocal},
	$$
	where we used~\eqref{eq:smallOverlap-reds}.
\end{proof}

In the rest of this subsection, we build a
$(\ell,\wp,\Gamma,\gamma_t,I)$-battery
$$
\{((y^0_i,z^0_i);(Y^0_i,Z^0_i))\}_{i=1}^I
\text{ (in $T^0$)
	and }
\{((y^\#_i,z^\#_i);(Y^\#_i,Z^\#_i))\}_{i=1}^I
\text{ (in $T^\#$)}
$$
with
corresponding $\alpha_i$s as defined in Definition~\ref{def:battery}.
We number the $\r$-vertices $i=1,\ldots,I'$ after
recoloring
and we build one test panel for each
$\r$-vertex. Here it will turn out that $I' \geq I$ as we will later
need to reject some of the test panels to avoid unwanted
correlations.
We root $T^\#$ at an arbitrary vertex $\rt^\#$.
(The rootings of $T^0$ and $T^\#$
need not be consistent at this point.)

\paragraph{Constructing co-hanging test panels.}
Let $x^0_i$ be an $\r$-vertex (after recoloring) in $T^0$. Because $x^0_i$ is colored
$\r$, by definition it has at least $2^\ell - 1$
$\g$-children, but its $\g$-children are connected in a
different way in $T^\#$.
We distinguish between two cases:
\begin{enumerate}
	\item {\em An appropriate co-hanging pair can be found:} All pairs of $\g$-children of $x^0_i$ are
	the roots of co-hanging, non-overlapping matching subtrees in $T^\#$.
	In that case at least one pair of $\g$-children $(y^0_i,z^0_i)$
	must be at a different evolutionary distance
	in $T^0$ than the corresponding pair $(y^\#_i,z^\#_i)$
	in $T^\#$. We use these pairs
	as our test panel, modulo the following re-rooting. If
	a test subtree is not rooted consistently
	in $T^0$ and $T^\#$, we move the corresponding test vertices
	to one of their corresponding $\g$-children where the rooting is consistent.
	This can always be done as there is at most one $\g$-child of a $\g$-vertex
	between itself and the root of $T^\#$. All other choices
	lead to a consistent rooting.
	See Figure~\ref{fig:many-r-co-hanging}
	for an illustration.
\begin{figure}
	\centering
	\includegraphics[width = 0.9\textwidth]{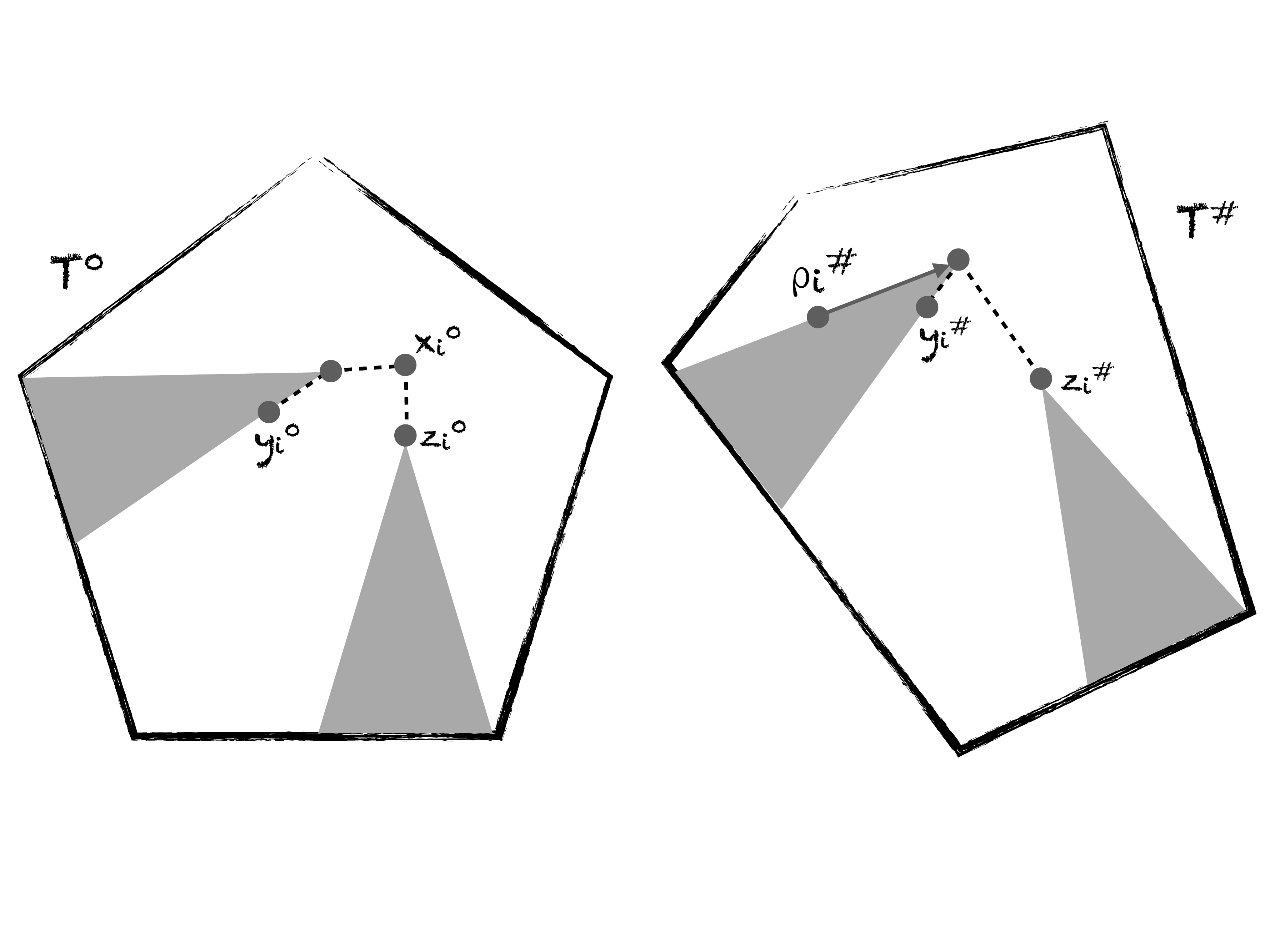}
	\caption{\srevision{
			Construction of the test in the co-hanging sub-case of the many-$\r$ case. The root of 
			the cluster in $T^\#$ is denoted
			by $\rho_i^\#$. Moving the test pair to the $\g$-children $y_i^0$ and $y_i^\#$ has the effect of making the new test subtrees rooted consistently. }}\label{fig:many-r-co-hanging}
\end{figure}

	\item {\em There exists a non-co-hanging pair:} Otherwise at least one pair $(\tilde{y}^0_i,\tilde{z}^0_i)$ of $\g$-children of $x^0_i$, with corresponding
	pair $(\tilde{y}^\#_i,\tilde{z}^\#_i)$, has a connecting path
	in
	$T^\#$ that intersects with the corresponding matching
	test subtrees, $\tilde{Y}^\#_i$ or $\tilde{Z}^\#_i$ (or both). Indeed, although by construction 
	the test subtrees are matching in $T^0$ and $T^\#$,
	the path connecting them may be ``positioned differently.'' See Figure~\ref{fig:many-r-long-path-between} 
	for an illustration of such a case.
	The main goal of the next claim is to show how to construct an appropriate co-hanging test panel in this case.
\end{enumerate}
\begin{claim}[Co-hanging pairs]
	\label{claim:cohanging-case1}
	In the many-$\r$ case,
	after recoloring, for each remaining $\r$-vertex $x^0_i$
	we can find
	a test pair of $\g$-vertices $(y^0_i,z^0_i)$ in the
	subtree rooted at $x^0_i$ in $T^0$,
	with corresponding test pair $(y^\#_i,z^\#_i)$ in $T^\#$,
	such that the test panel satisfy the cluster and
	pair requirements of a battery.
\end{claim}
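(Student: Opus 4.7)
The plan is to treat the two sub-cases separately, matching the discussion preceding the claim.

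\textbf{Co-hanging sub-case.} Here every pair of $\g$-children of $x^0_i$ gives rise to co-hanging, non-overlapping matching subtrees in $T^\#$. Since $x^0_i$ was colored $\r$ rather than $\g$, the coloring rule implies that the single $\g$-cluster obtained by promoting $x^0_i$ to $\g$ would fail to be metric-matching with its image in $T^\#$. The individual sub-$\g$-clusters already match, so the only possible source of mismatch is the pairwise distances between the $\g$-children: some pair $(y^0_i,z^0_i)$ must satisfy $|\dist_{T^0}(y^0_i,z^0_i)-\dist_{T^\#}(y^\#_i,z^\#_i)|\geq \quantum$, using the discretization of Definition~\ref{def:regphy}. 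I take $Y^0_i, Z^0_i$ to be the $\g$-clusters rooted at these children; they are $(\ell,1)$-dense, matching, and co-hanging by construction and the sub-case hypothesis, and proximal since $\gdist_{T^0}(y^0_i,z^0_i)\leq 2\ell\leq \Gamma$. If the rooting of $Y^\#_i$ or $Z^\#_i$ inherited from the global rooting of $T^\#$ is inconsistent with the matching, I move its test vertex down one $\ell$-level to the appropriate $\g$-child (there is at most one ``upwards-oriented'' one), which preserves all battery properties while increasing the graph distance by at most $\ell$.

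\textbf{Non-co-hanging sub-case.} Some pair $(\tilde{y}^0_i,\tilde{z}^0_i)$ has a $T^\#$-connecting path between $\tilde{y}^\#_i$ and $\tilde{z}^\#_i$ that re-enters one of $\tilde{Y}^\#_i,\tilde{Z}^\#_i$, say $\tilde{Y}^\#_i$. I ``descend'' inside the offending subtree: let $v^\#$ be the last point of this $T^\#$-connecting path lying in $\tilde{Y}^\#_i$, and let $v^0$ be its matching image in $\tilde{Y}^0_i$ (possibly an extra vertex per Definition~\ref{def:matching}). Take $y^0_i$ to be the deepest $\g$-vertex on the $T^0$-path from $\tilde{y}^0_i$ to $v^0$, and set $z^0_i:=\tilde{z}^0_i$ (re-rooted as in the first sub-case if necessary). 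By the maximal choice of $v^\#$, the corresponding $T^\#$-path from $y^\#_i$ to $z^\#_i$ avoids $Y^\#_i\cup Z^\#_i$, so the pair is co-hanging in both trees. Path-additivity in $T^0$ and the matching identity $\dist_{T^0}(\tilde{y}^0_i,v^0)=\dist_{T^\#}(\tilde{y}^\#_i,v^\#)$ yield the decomposition
\begin{equation*}
\dist_{T^0}(y^0_i,z^0_i)-\dist_{T^\#}(y^\#_i,z^\#_i)
= 2\dist_{T^0}(y^0_i,\tilde{y}^0_i)
+\dist_{T^0}(\tilde{y}^0_i,\tilde{z}^0_i)
-\dist_{T^\#}(\tilde{y}^\#_i,\tilde{z}^\#_i).
\end{equation*}
The first term is a positive multiple of $\quantum$; if the remaining two terms fail to deliver a $\quantum$-gap, I perform the analogous descent on the $\tilde{z}$-side, flipping the sign of the residual contribution. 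At least one of the two branches must produce a $\quantum$-gap, since otherwise all three quantities would simultaneously cancel, contradicting the discretization. Density of $Y^0_i$ is inherited from being a restricted sub-cluster of an $(\ell,1)$-dense $\g$-cluster.

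\textbf{Main obstacle.} The delicate point is proximity in the second sub-case: the descent must not push $\gdist_{T^0}(y^0_i,z^0_i)$ beyond $\Gamma$. Since a single $T^0$-edge can expand into a $T^\#$-path of up to $\invquantum g$ edges, the descent traverses at most $\invquantum g$ graph-edges of $T^0$, contributing at most $2\invquantum g\ell$ slack between $\g$-vertex candidates. Combined with the two $\ell$-level steps from $x^0_i$ to its $\g$-children and another $\ell$-level for possible re-rooting, this matches the threshold $\Gamma=(6+2\invquantum g)\ell$ fixed in~\eqref{eq:def-gamma}. A secondary subtlety is the situation where $v^0$ is an extra vertex produced by the matching rather than a genuine $\g$-vertex of $T^0$: one snaps to the nearest enclosing genuine $\g$-vertex, costing at most one more $\ell$-level that is absorbed by the same constant.
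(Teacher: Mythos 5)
Your treatment of the first (co-hanging) sub-case is essentially the paper's argument and is correct. The second sub-case, however, has a structural flaw that the paper's argument is specifically designed to avoid, and a gap in the distance argument.

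\textbf{Co-hanging fails with your choice of $y^0_i$.} You take $y^0_i$ to be the deepest $\g$-vertex \emph{on the path from $\tilde{y}^0_i$ toward $v^0$}. The $\g$-cluster $Y^0_i$ rooted at that vertex contains the continuation of the path below it towards $v^0$ (the relevant $\ell$-vertices are all $\g$, since $\tilde{Y}^0_i$ is a $\g$-cluster). Hence the matching subtree $Y^\#_i$ in $T^\#$ contains $v^\#$ (or the edge on which it lies). But the $T^\#$-path from $y^\#_i$ to $z^\#_i$ passes through $v^\#$, so it \emph{enters} $Y^\#_i$. The pair is therefore not co-hanging, and the Markov-property argument underlying Claim~\ref{claim:prop1-claim2} (Equation~\eqref{eq:proximal1}) breaks down. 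The paper resolves this by making the opposite choice: in its sub-subcases (a) and (b)(i), $y^0_i$ is a $\g$-child of $\tilde{y}^0_i$ chosen in the subtree \emph{not} containing $v^0$, so the connecting path never re-enters $Y^\#_i$; in sub-subcase (b)(ii), both test vertices are moved strictly \emph{below} $v^0$ and $w^0$ symmetrically, which again prevents the connecting path from re-entering the test subtrees.

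\textbf{The distance-gap alternative does not close.} Your identity
$\dist_{T^0}(y^0_i,z^0_i)-\dist_{T^\#}(y^\#_i,z^\#_i)
= 2\dist_{T^0}(y^0_i,\tilde{y}^0_i)
+\dist_{T^0}(\tilde{y}^0_i,\tilde{z}^0_i)
-\dist_{T^\#}(\tilde{y}^\#_i,\tilde{z}^\#_i)$
is algebraically correct given your choices, but the argument that ``at least one of the two branches must produce a $\quantum$-gap'' does not follow. With $A=2\dist_{T^0}(y^0_i,\tilde{y}^0_i)$, $B=2\dist_{T^0}(z^0_i,\tilde{z}^0_i)$, $C=\dist_{T^0}(\tilde{y}^0_i,\tilde{z}^0_i)-\dist_{T^\#}(\tilde{y}^\#_i,\tilde{z}^\#_i)$, you need $A+C\neq 0$ or $B+C\neq 0$. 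The first term need not be a positive multiple of $\quantum$: if there is no $\g$-vertex strictly between $\tilde{y}^0_i$ and $v^0$ (e.g., $v^0$ within one $\ell$-level of $\tilde{y}^0_i$) then $y^0_i=\tilde{y}^0_i$ and $A=0$; similarly $B=0$ is possible; and $C=0$ is exactly the paper's hardest sub-subcase (b)(ii). Then $A+C=B+C=0$ and your construction produces no separation. The paper handles $C=0$ by a genuinely different argument: it decomposes $\dist_{T^\#}(y^\#_i,z^\#_i)$ along the path $y^\#_i\to v^\#\to w^\#\to z^\#_i$ and uses that $\dist_{T^\#}(v^\#,w^\#)$ is \emph{strictly shorter} than $\dist_{T^0}(\tilde{y}^0_i,\tilde{z}^0_i)=\dist_{T^\#}(\tilde{y}^\#_i,\tilde{z}^\#_i)$, because the connecting path in $T^\#$ detours through the interiors of $\tilde{Y}^\#_i,\tilde{Z}^\#_i$.

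In short, the first sub-case is fine, but the second sub-case must be split further (as the paper does, on whether $\dist_{T^\#}(\tilde{y}^\#_i,\tilde{z}^\#_i)>2g\ell$ and, if not, on whether the $\tilde{y},\tilde{z}$ distances agree), and you must choose the test vertices either on the opposite side of $\tilde{y}^0_i$ from $v^0$ or strictly below $v^0,w^0$, not on the path toward $v^0$.
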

\begin{proof}
	We consider again the two cases above.
	
	\medskip
	
	\noindent {\bf An appropriate co-hanging pair can be found.} We proceed as we described above the statement
	of the claim.
	By construction the test subtrees, that is,
	the $\g$-clusters rooted at the test vertices, are
	$(\ell,1)$-dense.
	The test subtrees are also matching, co-hanging and their
	roots are at different evolutionary distances in $T^0$
	and $T^\#$ by~\eqref{eq:different-distance}.
	Finally, the test pair in $T^0$ is proximal as
	$$
	\gdist_{T^0}(y^0_i, z^0_i) \leq 4\ell \leq \Gamma,
	$$
	where we note that re-rooting procedure may increase the distance by at most $2\ell$.

	\medskip

	\noindent {\bf There exists a non-co-hanging pair.} For the second case, we use the notation of Item 2 above the statement of the claim.
	Because $T^\#$ has no cycle,
	the path between
	$\tilde{y}^\#_i$ and $\tilde{z}^\#_i$
	must be of the following form:
	there is a vertex $v^\#$ in $\tilde{Y}^\#_i$
	(possibly equal to $\tilde{y}^\#_i$)
	and a vertex $w^\#$ in $\tilde{Z}^\#_i$
	(possibly equal to $\tilde{z}^\#_i$) such that
	the path between $\tilde{y}^\#_i$ and $\tilde{z}^\#_i$
	1) intersects with $\tilde{Y}^\#_i$ between
	$\tilde{y}^\#_i$ and $v^\#$,
	2) does not intersect with either $\tilde{Y}^\#_i$ or
	$\tilde{Z}^\#_i$ between
	$v^\#$ and $w^\#$,
	and 3) intersects with $\tilde{Z}^\#_i$ between
	$w^\#$ and $\tilde{z}^\#_i$.
	\srevision{See Figures~\ref{fig:many-r-long-path-between}
		and~\ref{fig:many-r-short-path-between}
		for an illustration}.
	We let $v^0$ and $w^0$ be the extra vertices
	corresponding respectively to $v^\#$ and $w^\#$ in $T^0$. (Recall that {\em extra} vertices were defined
	in Definition~\ref{def:matching}.)
	We consider two subcases:
	\begin{enumerate}
		\item {\bf $\tilde{y}^\#_i,\tilde{z}^\#_i$ are ``far'' in $T^\#$.} Suppose first that
		\begin{equation}
		\label{eq:yzsharp-far}
		\dist_{T^\#}(\tilde{y}^\#_i,\tilde{z}^\#_i) > 2g\ell.
		\end{equation}
		That case is illustrated in Figure~\ref{fig:many-r-long-path-between}.
		We construct a co-hanging test panel 
		by choosing appropriate $\g$-children as follows.
		Recall that we must ensure in particular that our chosen pairs are co-hanging and at different
		evolutionary distances in $T^0$ and $T^\#$.
		If $v^\# = \tilde{y}^\#_i$, we simply set $y^\#_i = \tilde{y}^\#_i$.
		If $v^\# \neq \tilde{y}^\#_i$,
		let $y^0_i$ be a $\g$-child of $\tilde{y}^0_i$
		which satisfies the following:
		\begin{itemize}
			\item 
			Observe that one of the two children of $\tilde{y}^0_i$
			is the root of a subtree containing $v^0$. 
			We choose $y^0_i$ in the {\em other} subtree.
			This is to guarantee that the path joining
			$y^0_i$ and $z^0_i$ (below) does not
			intersect the resulting subtrees. See Figure~\ref{fig:many-r-long-path-between}.
			
			\item We also choose $y^0_i$ so that the test subtree
			rooted at $y^0_i$ and the test subtree rooted at the
			corresponding vertex $y^\#_i$ in $T^\#$ are rooted
			consistently. This can always be done as there is at most one $\g$-child of $\g$-vertex
			between itself and the root of $T^\#$. All other choices,
			of which there are overall at least $2^\ell/2 - 1$ 
			satisfying the first property above,			
			lead to a consistent rooting.
			We pick $y_i^0$ arbitrarily among them.
			
		\end{itemize}
        We define $z^0_i$ and $z^\#_i$ similarly.
        As a result of this construction, the subtrees rooted at $y^\#_i$ and $z^\#_i$
        are co-hanging in $T^\#$ by construction.
		\begin{figure}
			\centering
			\includegraphics[width = 0.9\textwidth]{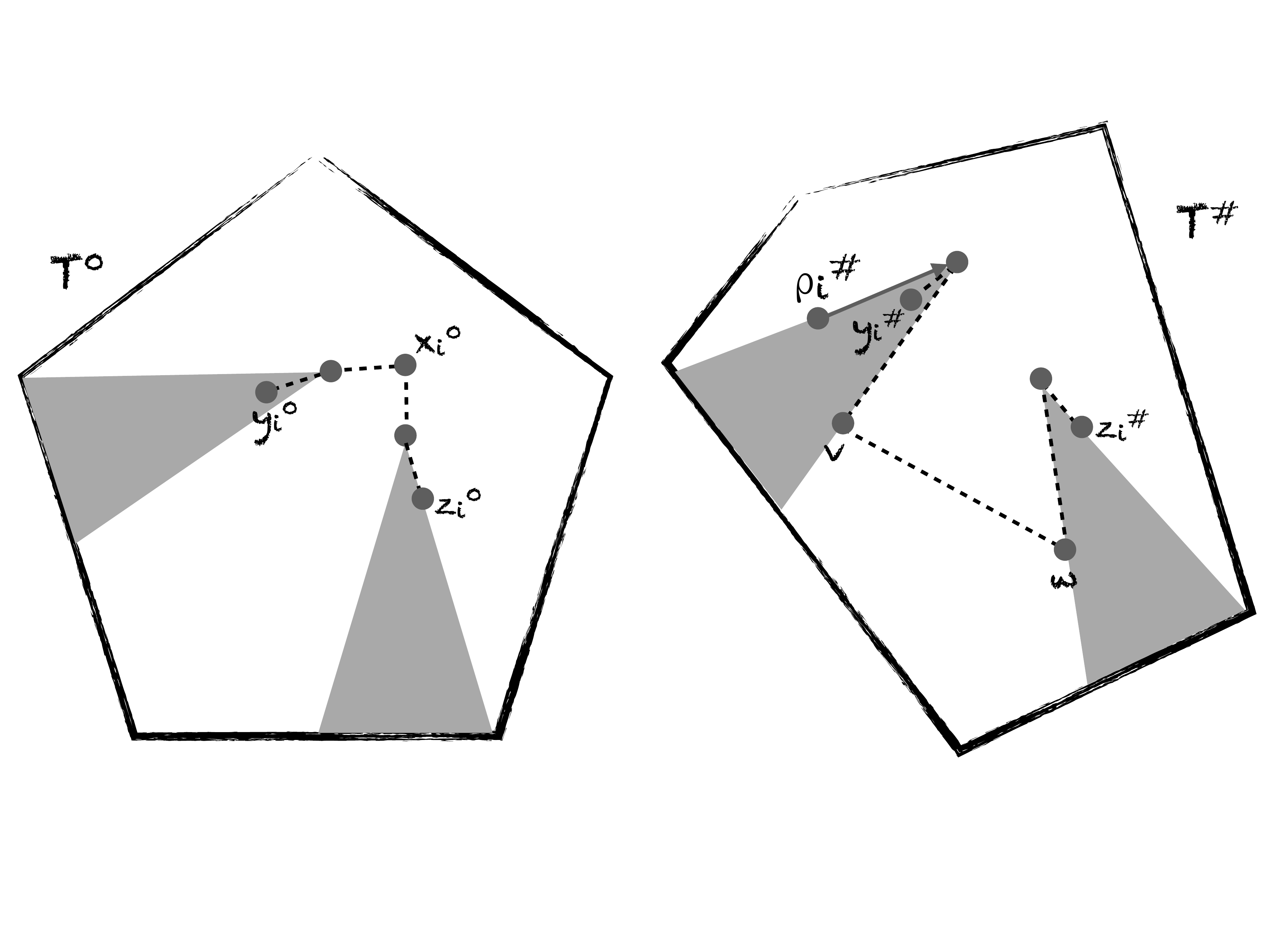}
			\caption{\srevision{
					Construction of the test in the non-co-hanging sub-case of the many-$\r$ case when
					$\tilde{y}^\#_i,\tilde{z}^\#_i$ are ``far.'' The root of 
					the cluster in $T^\#$ is denoted
					by $\rho_i^\#$. }}\label{fig:many-r-long-path-between}
		\end{figure}
		Moreover, the evolutionary distance between
		$y_i^0$ and $z_i^0$ satisfies 
		\begin{eqnarray}
		\dist_{T^0}(y^0_i, z^0_i)
		&=& \dist_{T^0}(y^0_i, \tilde{y}^0_i)
		+ \dist_{T^0}(\tilde{y}^0_i, \tilde{z}^0_i)
		+ \dist_{T^0}(\tilde{z}^0_i, z^0_i)\nonumber\\
		&\leq&
		\dist_{T^0}(y^0_i, \tilde{y}^0_i)
		+ 2g\ell
		+ \dist_{T^0}(\tilde{z}^0_i, z^0_i)\nonumber\\
		&<&
		\dist_{T^\#}(y^\#_i, \tilde{y}^\#_i)
		+ \dist_{T^\#}(\tilde{y}^\#_i,\tilde{z}^\#_i)
		+ \dist_{T^\#}(\tilde{z}^\#_i, z^\#_i)\nonumber\\
		&=& \dist_{T^\#}(y^\#_i,z^\#_i),\label{eq:different-distance}
		\end{eqnarray}
		where, on the second line, we used that $\tilde{y}^0_i$, $\tilde{z}^0_i$
		were chosen to be $\g$-children of $x_i^0$ in $T^0$ and, on third line, we used~\eqref{eq:yzsharp-far} and the fact that
		$\dist_{T^0}(y^0_i, \tilde{y}^0_i) = \dist_{T^\#}(y^\#_i, \tilde{y}^\#_i)$
		and $\dist_{T^0}(\tilde{z}^0_i, z^0_i) = \dist_{T^\#}(\tilde{z}^\#_i, z^\#_i)$
		by the matching condition.
		That is, $\dist_{T^0}(y^0_i, z^0_i)
		\neq \dist_{T^\#}(y^\#_i,z^\#_i)$
		as required.
		Hence the pairs $(y^0_i, z^0_i)$ and $(y^\#_i, z^\#_i)$
		satisfy the cluster and pair requirements of the battery.
		Indeed by construction the test subtrees are
		$(\ell,1)$-dense.
		The test subtrees are also matching, co-hanging and their
		roots are at different evolutionary distances in $T^0$
		and $T^\#$ by~\eqref{eq:different-distance}.
		Finally, the test pair in $T^0$ is proximal as
		$$
		\gdist_{T^0}(y^0_i, z^0_i) \leq 4\ell \leq \Gamma,
		$$
		because $y^0_i, z^0_i$ are $\g$-grandchildren
		of $x^0_i$.
		
		\item {\bf $\tilde{y}^\#_i,\tilde{z}^\#_i$ are ``close'' in $T^\#$.}
		Assume instead that
		\begin{equation}\label{eq:non-co-hanging-close}
		\dist_{T^\#}(\tilde{y}^\#_i,\tilde{z}^\#_i) \leq 2g\ell.
		\end{equation}
		 That case is illustrated in Figure~\ref{fig:many-r-short-path-between}.
		 We consider two sub-cases:
		\begin{enumerate}
			\item If 
			\begin{equation}
			\label{eq:yzsharp-far-different}
						\dist_{T^0}(\tilde{y}^0_i, \tilde{z}^0_i)  \neq \dist_{T^\#}(\tilde{y}^\#_i,\tilde{z}^\#_i),
			\end{equation}
			we proceed as in the ``far'' case above.
			The argument then follows in the same way
			with~\eqref{eq:yzsharp-far-different} playing the role of~\eqref{eq:yzsharp-far} in~\eqref{eq:different-distance}.
			
			\item If instead
			\begin{equation}
			\label{eq:yzsharp-far-same}
			\dist_{T^0}(\tilde{y}^0_i, \tilde{z}^0_i)  = \dist_{T^\#}(\tilde{y}^\#_i,\tilde{z}^\#_i),
			\end{equation}
			we choose the test pairs below $v^0$ and $w^0$ respectively, as shown in Figure~\ref{fig:many-r-short-path-between}.
			Formally, let $y^0_i$ be the closest $\g$-vertex below $v^0$
			resulting in a consistent rooting.
			Let $z^0_i$ be defined similarly. Such vertices
			exist within graph distance at most $2\ell$ of $v^0$ and $w^0$. (Note that the latter are not in general $\g$-vertices themselves which,
			in addition to the $(\ell,1)$-density assumption, explains the $2 \ell$.)
			Let $y^\#_i$ and $z^\#_i$ be the corresponding vertices
			in $T^\#$.
			\begin{figure}
				\centering
				\includegraphics[width = 0.9\textwidth]{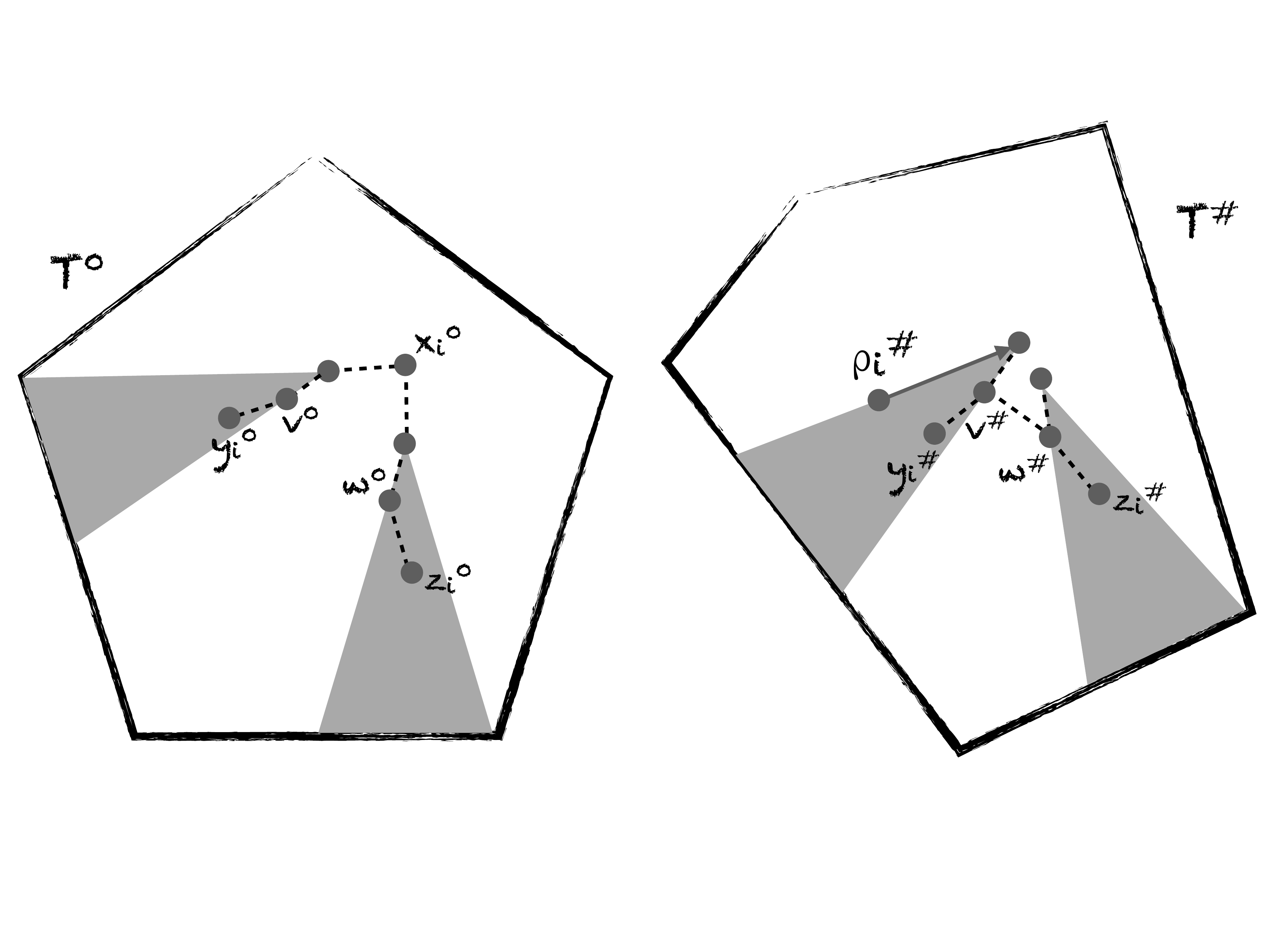}
				\caption{\srevision{
						Construction of the test in the non-co-hanging sub-case of the many-$\r$ case when
						$\tilde{y}^\#_i,\tilde{z}^\#_i$ are ``close.'' The root of 
						the cluster in $T^\#$ is denoted
						by $\rho_i^\#$. }}\label{fig:many-r-short-path-between}
			\end{figure}
			Then, the path connecting $y_i^\#$ and
			$z_i^\#$ in $T^\#$ goes through $v^\#$ and
			$w^\#$, and we have
			\begin{eqnarray}
			\dist_{T^\#}(y^\#_i,z^\#_i)
			&=& \dist_{T^\#}(y^\#_i,v^\#)
			+ \dist_{T^\#}(v^\#,w^\#)
			+ \dist_{T^\#}(w^\#,z^\#_i)\nonumber\\
			&<& \dist_{T^0}(y^0_i,\tilde{y}^0_i)
			+ \dist_{T^0}(\tilde{y}^0_i,\tilde{z}^0_i)
			+ \dist_{T^0}(\tilde{z}^0_i,z^0_i)\nonumber\\
			&=& \dist_{T^0}(y^0_i,z^0_i) \label{eq:different-distance2},
			\end{eqnarray}
			where the inequality holds term by term.
			For the first term, we note that the path
			from $y_i^0$ to $\tilde{y}_i^0$ in $T^0$
			goes through $v^0$,
			and similarly for the third term. For the second term, we use~\eqref{eq:yzsharp-far-same}
			and the fact that the path connecting $v$ and $w$ is a sub-path of the path connecting
			$\tilde{y}_i^0$ and $\tilde{z}_i^0$.
			Hence, we have established that
			$\dist_{T^0}(y^0_i, z^0_i)  \neq \dist_{T^\#}(y^\#_i,z^\#_i)$.
			Moreover
			note that the subtrees rooted at $y^\#_i$ and $z^\#_i$
			are co-hanging in $T^\#$. The resulting test subtrees are also matching and $(\ell,1)$-dense by construction. It remains
			to check the proximality condition. From the choice of $y^\#_i$, $z^\#_i$,
			\begin{eqnarray*}
				\gdist_{T^0}(y^0_i,z^0_i)
				&=& \gdist_{T^0}(y^0_i,v^0)
				+ \gdist_{T^0}(v^0,\tilde{y}^0_i)
				+ \gdist_{T^0}(\tilde{y}^0_i,\tilde{z}^0_i)
				+ \gdist_{T^0}(\tilde{z}^0_i,w^0)
				+ \gdist_{T^0}(w^0,z^0_i)\\
				&=& 
				\gdist_{T^0}(\tilde{y}^0_i,\tilde{z}^0_i)
				+ \gdist_{T^0}(y^0_i,v^0)
				+ \gdist_{T^0}(w^0,z^0_i)
				+ [\gdist_{T^0}(v^0,\tilde{y}^0_i)
				+ \gdist_{T^0}(\tilde{z}^0_i,w^0)]\\
				&\leq& 2\ell + 2\ell + 2\ell + 2\invquantum g\ell\\
				&=& (6 + 2\invquantum g) \ell\\
				&\leq& \Gamma,
			\end{eqnarray*}
			where the equality on the second line is a rearrangement
			of terms and the inequality on the third line
			holds term by term: the first term
			follows from the fact that $\tilde{y}_i^0$ and
			$\tilde{z}_i^0$ are both $\g$-children of $\tilde{x}_i^0$; the second and third terms follow from the choice 
			of $y_i^0$ and $z^0_i$ as described above; 
			and the term in square brackets is an application of~\eqref{eq:non-co-hanging-close} 
			and~\eqref{eq:yzsharp-far-same} converted into
			graph distance through a multiplication by $\invquantum$, together with the observation that the paths from $v^0$ to $\tilde{y}_i^0$ and from $\tilde{z}_i^0$ to $w^0$ match the paths from $v^\#$ to $\tilde{y}_i^\#$ and from $\tilde{z}_i^\#$
			to $w^\#$, which are themselves sub-paths
			of the path from  $\tilde{y}_i^\#$ to $\tilde{z}_i^\#$.
			Recall that $\Gamma$ is defined in~\eqref{eq:def-gamma}.
			Hence the pairs $(y^0_i, z^0_i)$ and $(y^\#_i, z^\#_i)$
			satisfy the cluster and pair requirements of the battery.
		\end{enumerate}
	\end{enumerate}
	
That concludes the proof.
\end{proof}

\paragraph{Sparsification in $T^\#$.}
It remains to satisfy the global requirements of the
battery. By the construction in Claim~\ref{claim:cohanging-case1} the test subtrees
are non-intersecting in both $T^0$ and $T^\#$.
However we
must also ensure that proximal/semi-proximal connecting paths
and non-proximal hats do not intersect with each other
or with test subtrees from other test panels.
By construction, this
is automatically satisfied in $T^0$ where all test pairs
are proximal.
To satisfy this requirement in $T^\#$,
we make the collection of test pairs ``sparser''
by rejecting an appropriate fraction of them.
\begin{claim}[Sparsification in $T^\#$]
	\label{claim:sparsification-case1}
	Let $\hcal' = \{(y^0_i,z^0_i); (y^\#_i,z^\#_i)\}_{i=1}^{I'}$
	be the test panels
	constructed in Claim~\ref{claim:cohanging-case1}.
	We can find a subset $\hcal \subseteq \hcal'$
	of size
	$$
	|\hcal| = I \geq \frac{1}{1 + 2^{2\gamma_t + 2}} I' \geq \frac{\Delta}{2 C_\ocal (1 + 2^{2\gamma_t + 2})}
	$$
	such that the test panels in $\hcal$
	satisfy all global requirements of a battery.
\end{claim}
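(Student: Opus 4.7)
The plan is to mimic the argument of Claim~\ref{claim:sparsification-hmg} in the homogeneous case, adapted to handle the subtleties introduced by the construction in Claim~\ref{claim:cohanging-case1}. First, I would verify that the test subtrees $\{Y^0_i, Z^0_i\}$ and $\{Y^\#_i, Z^\#_i\}$ are pairwise non-intersecting in both $T^0$ and $T^\#$. Non-intersection in $T^0$ is automatic: each test subtree is a $\g$-cluster (or a sub-$\g$-cluster obtained after a re-rooting move) rooted at a $\g$-child (or grandchild) of a distinct $\r$-vertex, and by construction of the coloring, maximal $\g$-clusters in $T^0$ are vertex-disjoint. Non-intersection in $T^\#$ is guaranteed by the recoloring step of Claim~\ref{claim:recoloring-case1}, which discarded precisely those $\r$-vertices whose $\g$-children root matching subtrees in $T^\#$ that intersect another $\g$-cluster's matching subtree. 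The possible re-rooting used in Claim~\ref{claim:cohanging-case1} only shrinks the test subtrees, so non-intersection is preserved.

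Next, I would perform a greedy sparsification on the panels in $\hcal'$, based on graph distance in $T^\#$. Start with the test panel indexed $i=1$ and remove from $\hcal'$ every panel $i \neq 1$ for which
\begin{equation*}
\min\{\gdist_{T^\#}(v,w)\ :\ v\in\{y^\#_1,z^\#_1\},\ w\in \vcal(Y^\#_i)\cup\vcal(Z^\#_i)\} \leq 2\gamma_t.
\end{equation*}
Then let $i$ be the smallest remaining index and repeat, producing the final set $\hcal$. This is exactly the procedure of Claim~\ref{claim:sparsification-hmg}. Since $T^\#$ is binary, the number of vertices of $T^\#$ within graph distance $2\gamma_t$ of $\{y^\#_1,z^\#_1\}$ is at most $2\cdot 2^{2\gamma_t+1} = 2^{2\gamma_t+2}$, and because the test subtrees in $T^\#$ are pairwise non-intersecting, each such vertex belongs to the vertex set of at most one $Y^\#_i\cup Z^\#_i$. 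Hence, at each step at most $2^{2\gamma_t+2}$ panels are removed in addition to the one selected, giving $|\hcal|\geq I'/(1+2^{2\gamma_t+2})$. Combining with Claim~\ref{claim:recoloring-case1}, which yields $I'\geq \Delta/(2C_\ocal)$, produces the stated bound.

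It remains to verify the global requirement 3(a) for $\hcal$: the $\mcal^0_i$s (and $\mcal^\#_i$s) have empty pairwise edge intersection. In $T^0$ this is immediate since all pairs are proximal with $\gdist_{T^0}(y^0_i,z^0_i)\leq \Gamma$, and the connecting path lies inside (or just outside) the two disjoint $\g$-clusters. In $T^\#$, the definition~\eqref{eq:definition-hcal} guarantees that the vertices of distinct test subtrees are separated by more than $2\gamma_t$; since every proximal or semi-proximal connecting path has length at most $\gamma_t$ and every non-proximal hat has length $\gamma_t$, none of these structures can reach from one retained test pair into the test subtree of another. Using $\gamma_t \geq \Gamma$ ensures that this buffer is sufficient.

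The main obstacle I anticipate is bookkeeping the possible cases from Claim~\ref{claim:cohanging-case1}: the re-rooting move, the ``far'' sub-case, and the two ``close'' sub-cases each produce test pairs whose test subtrees may be grandchildren subtrees rather than full $\g$-clusters, and in the ``close'' sub-case the test vertices $y^0_i,z^0_i$ lie below $v^0,w^0$ which may themselves be extra vertices. One must check carefully that, despite these modifications, the test subtrees retained remain subsets of the original $\g$-cluster structure, so that the disjointness argument above applies verbatim. Once this is confirmed, the counting argument is identical to the homogeneous case and the claim follows.
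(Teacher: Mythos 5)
Your proof is correct and takes essentially the same approach as the paper's: the same greedy sparsification by graph distance in $T^\#$, the same count of at most $2^{2\gamma_t+2}$ panels removed per retained panel (using that the test subtrees are non-overlapping so each vertex lies in at most one), and the same appeal to $\gamma_t \geq \Gamma$ to rule out intersections between connecting paths, hats, and test subtrees. The non-intersection of test subtrees in $T^\#$ that you re-verify in detail is, in the paper, simply attributed to the construction in Claim~\ref{claim:cohanging-case1} (together with the recoloring step), but your justification matches the intended one.
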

\begin{proof}
	We sparsify the set $\hcal'$ of test pairs as follows.
	Let $\{(Y^0_i,Z^0_i); (Y^\#_i,Z^\#_i)\}_{i=1}^{I'}$
	be the test subtrees corresponding to $\hcal'$.
	Start with test panel $((y^0_1,z^0_1); (y^\#_1,z^\#_1))$.
	Remove
	from $\hcal'$ all test panels $i \neq 1$ such that
	\begin{equation}\label{eq:definition-hcal}
	\min\{\gdist_{T^\#}(v,w)\ :\  v \in \{y^\#_1,z^\#_1\},
	w \in \vcal(Y^\#_i)\cup\vcal(Z^\#_i)\}
	\leq 2\gamma_t.
	\end{equation}
	Because there are at most $2\cdot 2^{2\gamma_t + 1}$
	vertices $w$ in $T^\#$ satisfying the above condition and that
	the test subtrees are non-overlapping in $T^\#$
	(so that any such vertex belongs to at most one
	test subtree), we remove
	at most $2^{2\gamma_t + 2}$ test panels from
	$\hcal'$.
	
	Let $i$ be the smallest index remaining in $\hcal'$.
	Proceed as above and then repeat until
	all indices in $\hcal'$ have been selected or rejected.
	
	At the end of the procedure, there are at least
	$$
	\frac{1}{1 + 2^{2\gamma_t + 2}} I'
	$$
	test panels remaining, the set of which we denote by $\hcal$.
	Recall that $\gamma_t \geq \Gamma$. Hence by~\eqref{eq:definition-hcal},
	in $\hcal$, the connecting paths of proximal/semi-proximal
	pairs
	and the hats of non-proximal pairs
	cannot intersect with each other or with any of the
	test subtree rooted at test vertices in $\hcal$.
\end{proof}

\paragraph{Summary of many-$\r$ case.}
We have proved the following in the many-$\r$ case.
Recall that $\wp = 1$, $\ell = \ell(g,\wp)$ is chosen as
in Proposition~\ref{prop:distinguishing},
$\Gamma = (6 + 4\invquantum g) \ell$,
and
$\gamma_t \geq \Gamma$, a multiple of $\ell$, and $C$
are chosen as
in Proposition~\ref{prop:distinguishing}.

\begin{proposition}[Battery in the many-$\r$ case]
	\label{prop:battery-many-r}
	In the many-$\r$ case, we can build a
	$(\ell,\wp,\Gamma,\gamma_t,I)$-battery
	$$
	\{((y^0_i,z^0_i);(Y^0_i,Z^0_i))\}_{i=1}^I
	\text{ (in $T^0$) and }
	\{((y^\#_i,z^\#_i);(Y^\#_i,Z^\#_i))\}_{i=1}^I
	\text{ (in $T^\#$)}
	$$
	with
	$$
	I \geq  \frac{\Delta}{2 C_\ocal (1 + 2^{2\gamma_t + 2})}.
	$$
\end{proposition}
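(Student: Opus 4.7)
The plan is to prove Proposition~\ref{prop:battery-many-r} by directly assembling the three claims established in this subsection, exactly mirroring the structure used for the homogeneous analogue in Section~\ref{section:hmg-app-battery}. The proof is short because most of the work has already been done; the role of the proposition is essentially to package these claims together while checking that the parameters line up.

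First, I would invoke Claim~\ref{claim:recoloring-case1} to guarantee that, after the one-time recoloring step described before the claim, the remaining count of $\r$-vertices satisfies $\#\r \geq \Delta/(2 C_\ocal)$. This is the step where the small-overlap hypothesis~\eqref{eq:smallOverlap} is used, in order to argue that not too many $\r$-vertices had to be recolored black. Next, I would apply Claim~\ref{claim:cohanging-case1} to each surviving $\r$-vertex $x_i^0$, producing from its $\g$-children a test panel $((y_i^0,z_i^0);(Y_i^0,Z_i^0))$ in $T^0$ and a matching panel $((y_i^\#,z_i^\#);(Y_i^\#,Z_i^\#))$ in $T^\#$ which fulfills all cluster requirements (the $(\ell,1)$-density) and all pair requirements (metric-matching, co-hanging, proximality with $\gdist_{T^0}(y_i^0,z_i^0) \leq (6 + 2\invquantum g)\ell = \Gamma$, and distinct evolutionary distances by at least $\quantum$). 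This yields $I' \geq \Delta/(2 C_\ocal)$ candidate test panels.

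The only remaining condition is the global intersection requirement 3(a) of Definition~\ref{def:battery}, namely that the $\fcal_i^0$'s and $\fcal_i^\#$'s have pairwise empty intersection. By construction the test subtrees themselves are non-overlapping in $T^\#$ (this is precisely what the recoloring was designed to ensure) and in $T^0$; the issue is that the connecting paths of proximal/semi-proximal pairs and the hats of non-proximal pairs in $T^\#$ may still intersect with other test subtrees or with each other. Applying Claim~\ref{claim:sparsification-case1} removes at most a factor $1/(1 + 2^{2\gamma_t + 2})$ of the panels through a greedy selection that excludes any panel whose test subtrees are within graph distance $2\gamma_t$ of the currently selected test roots in $T^\#$. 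Since $\gamma_t \geq \Gamma$, this guarantees that all connecting paths and hats of the retained panels are disjoint from all other retained test subtrees, hats, and connecting paths, establishing 3(a).

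Combining the three bounds yields
\[
I \;\geq\; \frac{1}{1 + 2^{2\gamma_t + 2}} \cdot I' \;\geq\; \frac{\Delta}{2 C_\ocal \,(1 + 2^{2\gamma_t + 2})},
\]
which is the desired inequality. No step here is particularly delicate at the level of the proposition itself; the real substance lies inside the three claims, especially Claim~\ref{claim:cohanging-case1}, whose case analysis (co-hanging vs.\ non-co-hanging pairs, with the further split into ``far'' and ``close'' subcases when $\tilde y_i^\#,\tilde z_i^\#$ lie at distance $\le 2g\ell$) is what forces the specific choice of $\Gamma$ in~\eqref{eq:def-gamma}. So at this stage the proof of Proposition~\ref{prop:battery-many-r} is merely a one-line assembly.
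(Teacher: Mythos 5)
Your proposal is correct and takes exactly the same approach as the paper, which states the proof as a one-line assembly of Claims~\ref{claim:recoloring-case1}, \ref{claim:cohanging-case1}, and \ref{claim:sparsification-case1}; you have simply unpacked that one-liner into the three constituent steps, in the correct order, with the correct bounds chained together.
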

\begin{proof}
	The result follows from
	Claims~\ref{claim:recoloring-case1},~\ref{claim:cohanging-case1},
	and~\ref{claim:sparsification-case1}.
\end{proof}

\subsection{Constructing a battery of tests: Large overlap case}
\label{section:largeOverlap}

We now construct a battery of tests in the large overlap
case. By assumption we have,
$$
|\ocal^\#| \geq \frac{1}{10} \frac{\blowup(T^0,T^\#)}{C_\ocal}.
$$
Moreover, by the proof of Claim~\ref{claim:overlapRelationship}, a significant
fraction of the vertices in the overlap are in fact
shallow, that is, they are close to the
boundary of the overlap. To build a battery in this case,
we show that a test pair can be found near each
shallow vertex.
As in the many-$\r$ case, we 
need to deal with a number of issues,
including the overlap of
$\g$-clusters, the possibility of non-co-hanging pairs, and the
proximity of the matching subtrees.

In this section, $T^0$ and $T^\#$ are fixed. To simplify
notation, we let $\Delta = \blowup\left(T^0,T^\#\right)$.
Recall that $T^0$ is rooted.
We also root $T^\#$ arbitrarily. 
Fix $\wp = 5$. Choose $\ell = \ell(g,\wp)$ as
in Proposition~\ref{prop:distinguishing}.
Then take
$$
\Gamma = 6 g \invquantum \log_2 \left(\frac{8}{1 - 1/\sqrt{2}}\right) + 2\ell g \invquantum + 4,
$$
and set
$\gamma_t \geq \Gamma$, a multiple of $\ell$, and $C$ as
in Proposition~\ref{prop:distinguishing}.

\paragraph{Test pairs near the boundary of the overlap.} Let $v^\#$ in $T^\#$ be in
the overlap.
Intuitively, vertex $v^\#$ can be used to construct a test pair
for the following two reasons:
\begin{itemize}
	\item It corresponds to (at least) two
	vertices $v^0_i$, $v^0_j$ in $T^0$ from distinct clusters. The
	evolutionary distance between these vertices differs
	in $T^0$, where it is $> 0$, and in $T^\#$, where
	it is $0$.
	
	\item The vertex $v^\#$ is in the matching $\g$-clusters
	of those
	including $v^0_i$ and $v^0_j$.
	Hence its sequence can be
	reconstructed using the same estimator on
	$T^0$ and $T^\#$.
\end{itemize}
However, to avoid unwanted correlations between
the ancestral reconstructions on $T^\#$,
one must be careful to construct
appropriate co-hanging test subtrees that further
satisfy all requirements of a battery.
We proceed instead by identifying pairs of
edges in $T^0$ that overlap close to its boundary.
\begin{enumerate}
	\item \srevision{ {\em Bounding the number of overlap-shallow edges in $T^0$.} Recall the definition of an overlap-shallow
	vertex from Definition~\ref{def:overlap-shallow} (in
	Claim~\ref{claim:overlapRelationship}).
	We further say that an edge $e = (x,w)$ in $\ocal^0$
	is {\em overlap-shallow with parameter $\beta$}
	if {\em both} $x$ and $w$ are overlap-shallow 
	with parameter $\beta$. We call {\em deep} those vertices
	and edges that are not overlap-shallow.
	Proceeding as in~\eqref{eq:factor-of-two-pre},
	we see that at most a fraction
	$1 - 1/\beta$ of vertices in the overlap 
	are deep. Each such vertex prevents at most $3$ edges
	in $\ocal^0$ from being overlap-shallow. From the fact
	there are at most twice as many vertices in the overlap
	as there are edges, we get that the number of overlap-shallow edges in $T^0$ is at least
	\begin{equation}
	\label{eq:shallow-edges}
	\left[1 - 6 \left(1- \frac{1}{\beta}\right)\right]
	|\ocal^0|.
	\end{equation}
	We will later choose $\beta > 1$ close enough to
	$1$ that the above fraction is positive. }
	
	\item \srevision{ {\em Bounding the number of intersecting pairs of shallow edges.}
	For reasons that will be explained below, our test construction is based on finding pairs
	of shallow edges that {\em intersect}. Formally, we say
	that $e_1, e_2 \in \ocal^0$ intersect if the corresponding
	paths in $T^\#$ share an edge. We say that
	an edge $e_1 \in \ocal^0$ is {\em useful}
	if it is overlap-shallow and if it intersects with at least
	one other overlap-shallow edge $e_2 \neq e_1$. Note that here 
	$e_1$ and $e_2$ must belong to distinct 
	maximal $\g$-clusters (otherwise the corresponding cluster would not be matching in $T^\#$). Let $e_0 \in \ocal^0$
	be deep. Recall that $e_0$ corresponds to a path
	of length at most $g\invquantum$ in $T^\#$. Let $e_1, e_2$
	are overlap-shallow edges intersecting with $e_0$
	but not with each other. Then the paths corresponding
	to each of $e_1$ and $e_2$ in $T^\#$ must intersect with different
	edges on the path corresponding to $e_0$. Put differently,
	any deep edge $e_0$ can prevent at most $g\invquantum$
	shallow edges from intersecting with any other shallow
	edge. Combining this with~\eqref{eq:shallow-edges},
	we get that the number of useful edges in $T^0$ is
	at least
	\begin{equation}
	\label{eq:useful-edges}
	\left[1 - 6 g \invquantum \left(1- \frac{1}{\beta}\right)\right]
	|\ocal^0|.
	\end{equation}
}
		
	\item \srevision{ {\it Existence of four close witnesses.} Let $e_i = (w_i^+,w_i^-)$ be a useful edge in $\gcal_i$. 
	Let $e_j = (w_j^+,w_j^-)$ in $\gcal_j$ be an overlap-shallow edge intersecting with $e_i$
	and let $e^\#$ be an edge in $T^\#$ lying on the paths
	corresponding to both $e_i$ and $e_j$. 
	Assume that, for $\iota = i,j$, $w_\iota^+$ is the parent
	of $w_\iota^-$. By definition of a useful edge, both
	$w_i^+$ and $w_j^+$ are overlap-shallow.
	In the proof of Claim~\ref{claim:overlapRelationship},
	we argued that~\eqref{eq:shallow-def} implies the existence of a close witness,
	that is,
	a leaf or vertex not in the overlap
	at a constant graph distance.
	By restricting the sum in~\eqref{eq:shallow-def}
	to those vertices that are on only one side below $w_\iota^+$ (that is, below
	one of its immediate children), we can find in fact 
	two distinct witnesses $\tilde{y}^0_\iota$ and
	$\tilde{z}^0_\iota$ at constant graph distance
	$
	C' = C'(g, \invquantum)
	$
	from $w_\iota^+$, one on each side.
	The four witnesses 
	$\tilde{y}^0_i$,
	$\tilde{z}^0_i$,
	$\tilde{y}^0_j$,
	$\tilde{z}^0_j$
	jointly
	satisfy the following properties:
	\begin{itemize}
		\item They are not in the overlap.
		\item For $\iota = i,j$, $\tilde{y}^0_\iota$ and $\tilde{z}^0_\iota$ are in $\gcal_\iota$.
		\item For $x = y,z$, the vertices in $T^\#$
		corresponding to $\tilde{x}^0_i$ and $\tilde{x}^0_j$ are on the same side of $e^\#$, that is, the path between them does not cross $e^\#$.
	\end{itemize}
	Let $\tilde{y}^\#_i$, $\tilde{y}^\#_j$, $\tilde{z}^\#_i$, and $\tilde{z}^\#_j$ be the corresponding
	vertices in $T^\#$. }
	
	\item \srevision{ {\it Key observation: quartet topologies differ on $T^0$ and $T^\#$.} We construct
	a test pair nearby $e_i$ as follows.
	The key observation is the following: by construction, the topology of $T^0$ restricted to the witnesses
	$\{\tilde{y}^0_i, \tilde{y}^0_j, \tilde{z}^0_i, \tilde{z}^0_j\}$ is
	$\tilde{y}^0_i \tilde{z}^0_i | \tilde{y}^0_j \tilde{z}^0_j$
	while 
	the
	topology of $T^\#$ restricted to the corresponding
	vertices
	$\{\tilde{y}^\#_i, \tilde{y}^\#_j, \tilde{z}^\#_i, \tilde{z}^\#_j\}$
	is
	$\tilde{y}^\#_i \tilde{y}^\#_j | \tilde{z}^\#_i \tilde{z}^\#_j$. Indeed, on $T^0$, the pairs
	$\{\tilde{y}^0_i, \tilde{z}^0_i\}$, $\{\tilde{y}^0_j, \tilde{z}^0_j\}$ belong to distinct co-hanging clusters
	and their most recent common ancestors are therefore
	separated by the path joining the roots of those clusters.
	On $T^\#$, on the other hand,  by construction $e^\#$
	separates $\{\tilde{y}^\#_i, \tilde{y}^\#_j\}$ from $\{\tilde{z}^\#_i, \tilde{z}^\#_j\}$.
	\srevision{See Figure~\ref{fig:overlap-quartet} for an illustration and 
	refer to Definition~\ref{def:restricted}
	for quartet topology notation.}
	\begin{figure}
		\centering
		\includegraphics[width = 0.9\textwidth]{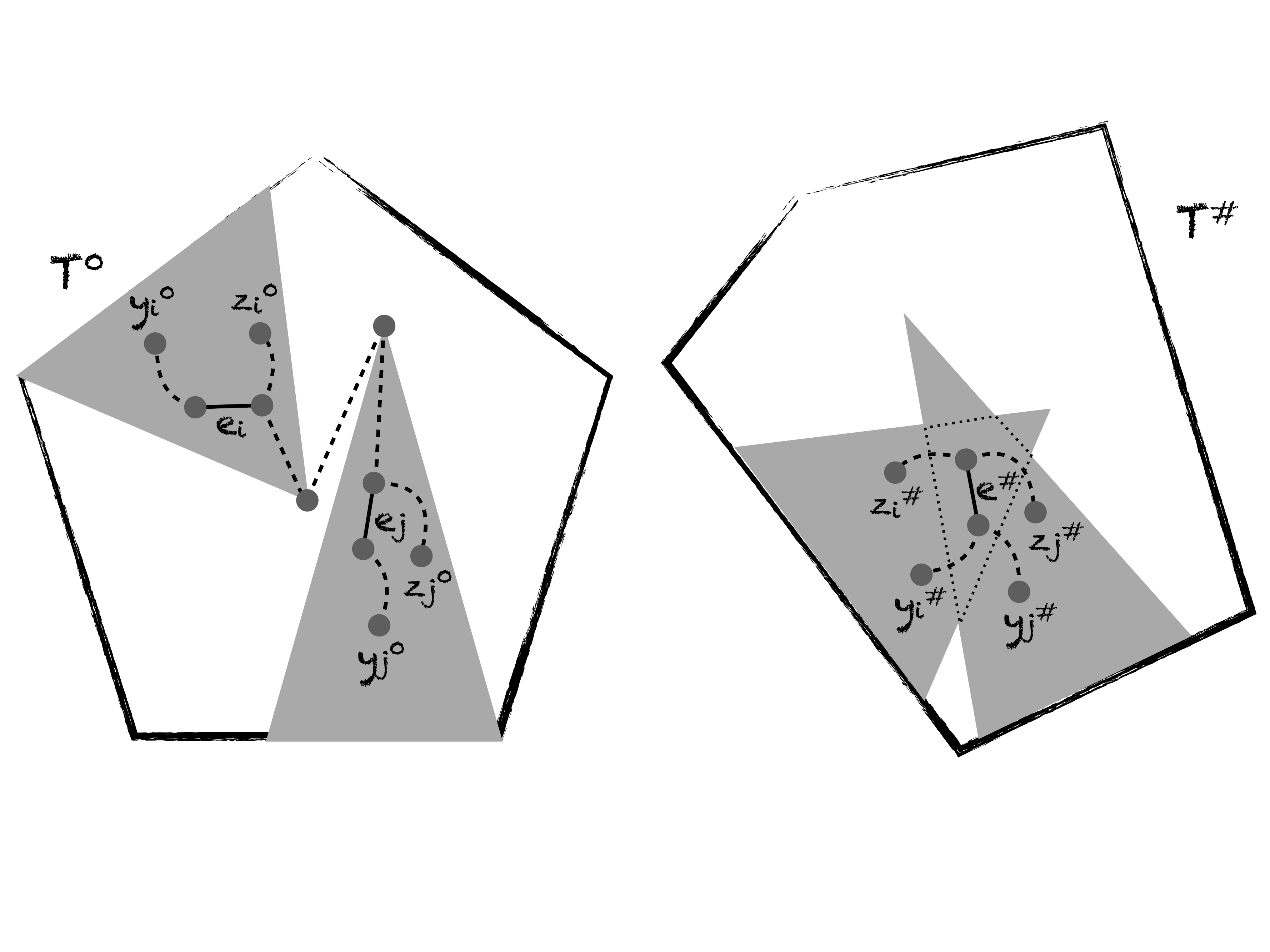}
		\caption{\srevision{
				Construction of the test in the large overlap case. The region inside the dotted line
				is part of the overlap in $T^\#$. }}\label{fig:overlap-quartet}
	\end{figure}
}

	\item \srevision{ {\it Existence of witnesses at different evolutionary
		distances on $T^0$ and $T^\#$.}
	The reason the above observation is significant
	is that it allows us to find a pair among the
	witnesses whose evolutionary distance differs
	on $T^0$ and $T^\#$, as we show next.
	Note that
	\begin{equation}\label{eq:overlapQuartet}
	\dist_{T^0}(\tilde{y}^0_\iota, \tilde{z}^0_\iota)
	= \dist_{T^\#}(\tilde{y}^\#_\iota, \tilde{z}^\#_\iota)
	\end{equation}
	for $\iota = i, j$ by definition
	of the matching subtree of $\gcal_\iota$.
	Moreover, by the four-point condition~\eqref{eq:four-point}
	in the non-degenerate case,
	$$
	\dist_{T^0}(\tilde{y}^0_i, \tilde{y}^0_j)
	+ \dist_{T^0}(\tilde{z}^0_i, \tilde{z}^0_j)
	> \dist_{T^0}(\tilde{y}^0_i, \tilde{z}^0_i)
	+ \dist_{T^0}(\tilde{y}^0_j, \tilde{z}^0_j),
	$$
	and
	$$
	\dist_{T^\#}(\tilde{y}^\#_i, \tilde{z}^\#_i)
	+ \dist_{T^\#}(\tilde{y}^\#_j, \tilde{z}^\#_j)
	> \dist_{T^\#}(\tilde{y}^\#_i, \tilde{y}^\#_j)
	+ \dist_{T^\#}(\tilde{z}^\#_i, \tilde{z}^\#_j),
	$$
	which, with~\eqref{eq:overlapQuartet}, implies
	$$
	\dist_{T^0}(\tilde{y}^0_i, \tilde{y}^0_j)
	+ \dist_{T^0}(\tilde{z}^0_i, \tilde{z}^0_j)
	>  \dist_{T^\#}(\tilde{y}^\#_i, \tilde{y}^\#_j)
	+ \dist_{T^\#}(\tilde{z}^\#_i, \tilde{z}^\#_j).
	$$
	Hence one of the following must hold
	$$
	\dist_{T^0}(\tilde{y}^0_i, \tilde{y}^0_j)
	> \dist_{T^\#}(\tilde{y}^\#_i, \tilde{y}^\#_j)
	\quad \text{or}
	\quad
	\dist_{T^0}(\tilde{z}^0_i, \tilde{z}^0_j)
	> \dist_{T^\#}(\tilde{z}^\#_i, \tilde{z}^\#_j).
	$$
	Without loss of generality, assume that
	$\dist_{T^0}(\tilde{y}^0_i, \tilde{y}^0_j)
	> \dist_{T^\#}(\tilde{y}^\#_i, \tilde{y}^\#_j).
	$
}
	
	\item \srevision{ {\it Distance to witnesses.}
	We will also need to bound $\dist_{T^\#}(\tilde{y}^\#_i, \tilde{y}^\#_j)$.
	It suffices to bound $C'$ above.
	\begin{claim}[Distance to witnesses]
		\label{claim:distance-to-witnesses}
		We have
		$$
		C' \leq 3 \log_2 \left(\frac{4\beta}{1 - 1/\sqrt{2}}\right),
		$$
		for $\ell$ large enoug.
	\end{claim}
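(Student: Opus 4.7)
The plan is to argue by contradiction using the overlap-shallow hypothesis applied to $w_\iota^+$. Suppose that on one side of $w_\iota^+$, below the immediate child $u$ of $w_\iota^+$ in $\gcal_\iota$, no witness (leaf or non-overlap vertex of $\gcal_\iota$) lies within graph distance $C'$ of $w_\iota^+$. Then every vertex in $\vcal^{\gcal_\iota}_u$ at graph distance at most $C' - 1$ from $u$ belongs to $\wcal^{\gcal_\iota}_{w_\iota^+}$, and therefore contributes to the overlap-shallow sum~\eqref{eq:shallow-def} at $w_\iota^+$. The goal is to show that if $C'$ exceeds the stated bound, this contribution already exceeds $\beta/(1 - 1/\sqrt{2})$, contradicting overlap-shallowness of $w_\iota^+$.

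To lower bound the relevant sum, first use the $(\ell,5)$-density of $\gcal_\iota$ from Definition~\ref{def:dense-subtree}: at the $i$-th $\ell$-level below $w_\iota^+$ the $\ell$-completion has at least $(2^\ell - 5)^i$ vertices, and by propagating the density defect one $\ell$-level at a time on the $u$-side of $w_\iota^+$, at least $N_i \geq \tfrac{1}{2}(2^\ell - 6)^i$ of these vertices lie below $u$ once $\ell$ is large. Restricting the overlap-shallow sum to this sub-collection gives
\begin{equation*}
\frac{\beta}{1 - 1/\sqrt{2}} \;>\; \sum_{i \geq 0:\, i\ell \leq C' - 1} \tfrac{1}{2}(2^\ell - 6)^i \cdot 2^{-(1 + i\ell)/2}.
\end{equation*}
Now choose $\ell$ so large that $(2^\ell - 6)/2^{\ell/2} \geq 2^{\ell/3}$; keeping only the last term of the geometric sum then yields a lower bound of $\tfrac{1}{2\sqrt{2}} \cdot 2^{i^\ast \ell/3}$ with $i^\ast = \lfloor (C'-1)/\ell \rfloor$. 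Comparing with $\beta/(1 - 1/\sqrt{2})$ and solving for $C'$ gives $C' \leq 3 \log_2\!\left(4\beta/(1 - 1/\sqrt{2})\right)$, the claimed bound. Applying the same argument to the other immediate child of $w_\iota^+$ produces a witness on each side within graph distance $C'$, as required.

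The main obstacle is the one-sided density bound on $N_i$: Definition~\ref{def:dense-subtree} gives a global lower bound for the entire rooted cluster $\gcal_\iota$, but we need a count restricted to the subtree below a single immediate child $u$ of $w_\iota^+$. This calls for a level-by-level propagation of the defect budget $\wp = 5$, complicated by the fact that $w_\iota^+$ need not itself be an $\ell$-vertex of $\gcal_\iota$ so that the ``offset'' between $w_\iota^+$ and the nearest $\ell$-level below it must be handled. The factor $3$ in the statement (rather than the $2$ one would naively read off from $\sum 2^{i\ell/2}$) absorbs these boundary losses, the $\tfrac{1}{2}$ from the one-sided restriction, and the slack in the inequality $(2^\ell - 6)/2^{\ell/2} \geq 2^{\ell/3}$ for large $\ell$.
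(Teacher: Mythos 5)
Your approach mirrors the paper's exactly: argue by contradiction, apply the overlap-shallow inequality~\eqref{eq:shallow-def} at $w_\iota^+$, lower-bound the contribution to the sum from one side via density, and use the ``for $\ell$ large enough'' slack to convert the geometric rate from $\sqrt{2}$ to $2^{1/3}$. The paper runs this in one line, keeping only the vertices at the boundary of the ball of radius $C'$, and writes the count as $\tfrac{1}{2}(2^\ell-1)^{C'/\ell}$.

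Two small discrepancies are worth flagging. First, you invoke $(\ell,5)$-density, but the clusters $\gcal_\iota$ entering this claim are the $\g$-clusters produced by the coloring procedure of Section~\ref{sec:finding-matching}, which are locally $(\ell,1)$-dense (every $\ell$-vertex of a $\g$-cluster has at least $2^\ell-1$ $\ell$-children in the cluster). This local branching bound is also what one needs to obtain a \emph{one-sided} descendant count below a single immediate child $u$; the global level-count of Definition~\ref{def:dense-subtree} that you cite does not by itself give that. The $(\ell,5)$-density is the weaker property that the cleaved test subtrees inherit later, in Claim~\ref{claim:sparsification-case2}, and is not what is used here.

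Second, your floor-function bookkeeping $i^\ast = \lfloor (C'-1)/\ell\rfloor$ and the ``keep only the last term'' step leave an additive $+\ell$ in the resulting bound on $C'$, roughly $C' \lesssim \ell + 3\log_2\bigl(\tfrac{2\sqrt{2}\beta}{1-1/\sqrt{2}}\bigr)$. You claim the factor $3$ (versus the naive $2$) absorbs this, but it cannot: that factor is already spent on the slack in $(2^\ell - c)^{1/\ell}/\sqrt{2} \geq 2^{1/3}$, and a multiplicative constant on the $\log_2$ cannot absorb an additive $\ell$ that is chosen independently of $\beta$. The paper sidesteps the issue by treating $(2^\ell-1)^{C'/\ell}$ with $C'/\ell$ as a real exponent, which is justified because local density also controls vertex counts \emph{between} $\ell$-levels (within each $\ell$-block the cluster is a near-complete binary tree). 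To be fully rigorous one would split $C' = i\ell + j$ with $0 \le j < \ell$ and track both indices, rather than discard the intra-block count. None of this is fatal --- even a bound $C' \le \ell + 3\log_2(\cdot)$ only shifts constants in~\eqref{eq:gamma-requirement} and in $\Gamma$ --- but the specific $\ell$-free form asserted in the claim requires using the finer-grained local density, not just the floor of $C'/\ell$.
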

	\begin{proof}
		We use the notation of Claim~\ref{claim:overlapRelationship}.
		Assume all vertices in $\vcal^{\gcal_i}_{w_i^-}$ within graph distance $C' - 1$ are in
		$\wcal^{\gcal_i}_{w_i^-}$. Because
		$\gcal_i$ is $(\ell,1)$-dense, 
		we have that within graph distance $C'$
		of $w_i^+$ there is at least
		$$
		\frac{1}{2} (2^\ell - 1)^{\frac{C'}{\ell}}
		$$
		vertices in $\wcal^{\gcal_i}_{w_i^+}$ below $w_i^-$, where we counted
		only the furthest vertices within this ball.
		Hence the
		sum in~\eqref{eq:shallow-def} restricted to
		vertices below $w_i^-$ satisfies
		$$
		\sum_{y \in \wcal^{\gcal_i}_{w_i^-}}
		2^{-\frac{\gdist_{T^0}(x,y)}{2}}
		\geq \frac{1}{2} (2^\ell - 1)^{\frac{C'}{\ell}}
		2^{-\frac{C'}{2}}
		\geq \frac{1}{2} 2^{\frac{C'}{3}}
		> \frac{\beta}{1 - 1/\sqrt{2}},
		$$
		for $\ell$ large enough,
		if
		$$
		C' > 3 \log_2 \left(\frac{4\beta}{1 - 1/\sqrt{2}}\right).
		$$
	\end{proof}
}

\end{enumerate}

\srevision{
We repeat the procedure above for each useful
edge and get a collection of pre-test panels. 
To satisfy the requirements of the battery
we then proceed, similarly to the many-$\r$ case,
by re-rooting and sparsification.
We describe these steps next.
}

\paragraph{Co-hanging pairs.}
Note that the roots of $T^0$ and $T^\#$
may not be consistent, in the sense that the $\g$-clusters
and their matching subtrees may not be rooted at
corresponding vertices.
However we can make it so that
the test subtrees are rooted consistently and ensure
that the test subtrees are co-hanging.

For every pre-test panel
constructed above,
using the same notation,
we proceed as follows. If $\tilde{y}^0_i$
is on the path between the root of $\gcal_i$ in
$T^0$ and the (possibly extra) vertex corresponding
to the root of the matching subtree
in $T^\#$, we move $\tilde{y}^\#_i$ over to one of its
immediate children such that the corresponding vertex
$\tilde{y}^0_i$ is not on this path---\emph{unless}
$\tilde{y}^0_j$ is a descendant
of that vertex. In that case, we instead move
$\tilde{y}^\#_i$ over to the child of its other immediate child such that the corresponding vertex
$\tilde{y}^0_i$ is not on the path above. The reason we
need these two cases is that we seek to preserve
the inequality
$$
\dist_{T^0}(\tilde{y}^0_i, \tilde{y}^0_j)
> \dist_{T^\#}(\tilde{y}^\#_i, \tilde{y}^\#_j).
$$
In both cases, the two sides of the inequality
increase by the same amount, at most $2g$.
We do the same on $\gcal_j$.

At this point, 1)
the $\g$-cluster of $T^0$ rooted at
$\tilde{y}^0_i$ and the matching
subtree rooted at $\tilde{y}^\#_i$ are
rooted consistently (and similarly for
$\tilde{y}^0_j$ and $\tilde{y}^\#_i$)
and 2) the $\g$-clusters rooted at
$\tilde{y}^0_i$ and $\tilde{y}^0_j$ are
co-hanging (and similarly for the matching
subtrees in $T^\#$).

Let $y^0_i$ be a closest $\g$-vertex below
$\tilde{y}^0_i$ on $\gcal_i$ and similarly
for $y^0_j$.
Let $y^\#_i$ and $y^\#_j$
be the corresponding vertices in $T^\#$.
Then the test subtrees (that is the $\g$-clusters)
$\bar{Y}^0_i$, $\bar{Y}^0_j$
$\bar{Y}^\#_i$ and $\bar{Y}^\#_j$ rooted respectively at
$y^0_i$, $y^0_j$
$y^\#_i$ and $y^\#_j$
are such that $(\bar{Y}^0_i, \bar{Y}^0_j)$
and $(\bar{Y}^\#_i, \bar{Y}^\#_j)$ are
co-hanging. In particular, they are non-intersecting by construction.
Indeed, $(\bar{Y}^0_i, \bar{Y}^0_j)$ belong to different $\g$-clusters
in $T^0$ and $(\bar{Y}^\#_i, \bar{Y}^\#_j)$ are on different
sides below $v^\#$ in $T^\#$.

\srevision{
Moreover, by Claim~\ref{claim:distance-to-witnesses},
we have
\begin{equation}
\label{eq:gamma-requirement}
\gdist_{T^\#}(y^\#_i, y^\#_j) \leq
6 g\invquantum \log_2 \left(\frac{4\beta}{1 - 1/\sqrt{2}}\right) + 2\ell g \invquantum + 4
\leq \Gamma,
\end{equation}
where the first term corresponds to the distance
to the closest witnesses, the second term corresponds
to the distance to the closest $\g$-vertex,
and the third term corresponds to the re-rooting 
operation above.
We also used that each edge in $T^0$ corresponds
to at most $g\invquantum$ edges in the matching cluster.
Hence the test pair $(y^\#_i, y^\#_j)$
is proximal.}
We also have
$$
\dist_{T^0}(y^0_i, y^0_j)
> \dist_{T^\#}(y^\#_i, y^\#_j),
$$
because
$
\dist_{T^0}(\tilde{y}^0_i, \tilde{y}^0_j)
> \dist_{T^\#}(\tilde{y}^\#_i, \tilde{y}^\#_j),
$
$
\dist_{T^0}(\tilde{y}^0_\iota, y^0_\iota)
= \dist_{T^\#}(\tilde{y}^\#_\iota, y^\#_\iota),
$
for $\iota=i,j$, and
$y^*_\iota$ is below $\tilde{y}^*_\iota$
for $*=0,\#$ and $\iota = i,j$.

\paragraph{Sparsification.}
It remains to satisfy the global requirements of the
battery.
Unlike the many-$\r$ case, we need to make the
collection of test pairs sparser in
{\em both} $T^\#$ and $T^0$.
Indeed, although
there is no overlap between the $\g$-clusters in $T^0$,
in constructing the tests we may have used
the {\em same} maximal $\g$-cluster repeatedly.
Hence there is in fact no guarantee that the test subtrees
are not overlapping in $T^0$.
In $T^\#$, test subtrees may also be overlapping,
whether or not they belong to the same $\g$-cluster.
Moreover,
although the test subtrees are co-hanging and
proximal in $T^\#$,
we must ensure that the connecting paths
do not intersect with
other test subtrees or their connecting paths.

Let $\{(y^0_i,y^0_j); (y^\#_i,y^\#_j)\}_{(i,j)\in \hcal'}$
be the test panels
constructed above.
\srevision{
By~\eqref{eq:largeOverlap},~\eqref{eq:useful-edges}, and
Claim~\ref{claim:overlapRelationship}, we have
\begin{eqnarray*}
|\hcal'|
&\geq&
\left[1 - 6 g \invquantum \left(1- \frac{1}{\beta}\right)\right]
|\ocal^0|\\
&\geq& 
\left[1 - 6 g \invquantum \left(1- \frac{1}{\beta}\right)\right] \cdot \frac{1}{g\invquantum} 
|\ocal^\#|\\
&\geq& 
\left[1 - 6 g \invquantum \left(1- \frac{1}{\beta}\right)\right]\cdot  \frac{1}{g\invquantum} \cdot
\frac{1}{10}
\frac{\Delta}{C_\ocal}.
\end{eqnarray*}
We choose 
$$
\beta = \frac{12g \invquantum}{12 g \invquantum - 1},
$$
so that the expression in square brackets above is $1/2$
and we have
$$
\left|\hcal'\right| \geq \frac{\Delta}{20 g \invquantum C_\ocal}.
$$
We note that because $g\invquantum \geq 1$, we have $\beta \leq 2$.}
Let $\{(\bar{Y}^0_i,\bar{Y}^0_j);
(\bar{Y}^\#_i,\bar{Y}^\#_j)\}_{(i,j)\in \hcal'}$
be the test subtrees corresponding to $\hcal'$.
\begin{claim}[Sparsification]
	\label{claim:sparsification-case2}
	Let
	$$
	C_w = 3 g \invquantum \log_2 \left(\frac{8}{1 - 1/\sqrt{2}}\right) + \ell g\invquantum + 2.
	$$
	There is a subset $\hcal \subseteq \hcal'$
	of size
	$$
	|\hcal| \geq \frac{1}{1 + 2^{6\gamma_t + C_w + 3}g\invquantum} |\hcal'|
	\geq
	\frac{\Delta}{20 g \invquantum C_\ocal (1 + 2^{6\gamma_t + C_w + 3}g\invquantum)}
	$$
	and $(\ell,5)$-dense modified test subtrees
	$\{(Y^0_i,Y^0_j);
	(Y^\#_i,Y^\#_j)\}_{(i,j)\in \hcal}$
	such that the test panels in $\hcal$
	satisfy the requirements of a battery.
\end{claim}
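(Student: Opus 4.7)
The plan is to follow the template of Claim~\ref{claim:sparsification-case1} but to address two new difficulties that were not present in the many-$\r$ case: test subtrees may overlap in $T^0$ (because several pre-test panels can reuse the same maximal $\g$-cluster to construct their witnesses) and they may overlap in $T^\#$ (because the entire construction is anchored to the overlap region $\ocal^\#$, so nearby overlap edges produce nearby pre-test panels). Accordingly I would prove the claim in two stages: first trim each test subtree so that the collection is globally non-intersecting, and then perform a greedy sparsification analogous to~\eqref{eq:definition-hcal} with a large enough exclusion radius.

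For the trimming stage, I would process the pre-test panels in some fixed order and, for each panel $(i,j)$, define $Y^0_i$ to be the $\g$-subcluster of $\bar{Y}^0_i$ rooted at $y^0_i$ from which all branches that have already been claimed by a previously processed panel have been pruned, and analogously for $Y^0_j$, $Y^\#_i$, $Y^\#_j$. The point of relaxing the density parameter from $\wp=1$ (which is what the raw $\g$-clusters satisfy) to $\wp=5$ is to absorb the combined losses due to (i) the re-rooting that moved $\tilde{y}^0_i$ down to $y^0_i$ by up to $2\ell$ graph-distance, (ii) the exclusion of the single branch leading to the witness $\tilde{z}^0_i$ (and symmetrically on the $z$-side), (iii) the removal of the branch that contains the overlap, and (iv) the pruning of branches claimed by earlier panels in the sparsification below. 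Each of these contributes at most a bounded constant number of "missing children" per $\ell$-level, and the original $(\ell,1)$-density leaves enough slack for the final subtrees to remain $(\ell,5)$-dense.

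For the sparsification stage, I would greedily keep a panel $(i,j)$ and then remove every other panel $(i',j')$ from $\hcal'$ whose test vertices $\{y^\#_{i'},y^\#_{j'}\}$ (respectively $\{y^0_{i'},y^0_{j'}\}$) lie within graph distance $2\gamma_t$ in $T^\#$ of $\{y^\#_i,y^\#_j\}$ or within the $2\gamma_t$-neighborhood in $T^0$ of the two $\g$-clusters we used. Because $\gamma_t\geq\Gamma$ and $\Gamma$ was chosen so that the entire construction (witnesses, re-rooting, descent to the nearest $\g$-vertex) fits within graph distance $C_w$ of the overlap edge $e^\#$ that spawned the panel, this exclusion zone captures every potential connecting path or hat of other panels that could cross our test subtrees. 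A standard ball-counting argument then bounds the number of excluded panels per retained panel: each retained panel forbids at most $O(2^{2\gamma_t})$ vertices of $T^\#$, and each such vertex is within graph distance $C_w$ of at most $O(2^{C_w} g \invquantum)$ overlap edges from which a competing panel could have been built (the factor $g\invquantum$ accounts for the discretization of edge weights along the corresponding path). Multiplying out and combining the $T^0$ and $T^\#$ contributions yields the advertised bound $|\hcal|\geq |\hcal'|/(1+2^{6\gamma_t+C_w+3}g\invquantum)$, and the lower bound on $|\hcal|$ in terms of $\Delta$ then follows by substituting the lower bound on $|\hcal'|$ established just before the claim.

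The main obstacle I expect is the trimming stage: one has to verify that after all four sources of damage (re-rooting, witness avoidance, overlap removal, conflict pruning with previously selected panels), the modified subtrees $Y^0_*$ and $Y^\#_*$ really are $(\ell,5)$-dense in the sense of Definition~\ref{def:dense-subtree}. This boils down to a level-by-level accounting on the $\ell$-completion $\lfloor Y\rfloor_\ell$, showing that at every $\ell$-level at most five descendants per vertex are lost in total, which is tight and uses crucially that each of the four modifications affects only a single branch (or a bounded number of branches) per $\ell$-step. The sparsification and counting steps are then routine adaptations of Claim~\ref{claim:sparsification-case1}.
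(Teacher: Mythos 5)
Your proposal has the right high-level shape (sparsify and trim to enforce the global requirements), but the order of the two stages is reversed from the paper's, and that reversal breaks your density argument. The paper \emph{sparsifies first}, using exclusion radius $6\gamma_t$ (not $2\gamma_t$), so that after sparsification any two test vertices in different retained panels are at graph distance at least $6\gamma_t$. Only then does it trim: for each retained panel it prunes from $\bar{Y}^0_i$ the subtrees claimed by other retained panels together with descendants of the upward $2\gamma_t$-path from those subtrees' roots. The $\geq 6\gamma_t$ separation is exactly what lets one conclude, via the inequality $2(2\gamma_t)+2g\ell<6\gamma_t$, that no $\g$-vertex in $\bar{Y}^0_i$ can lose more than two $\g$-children to this pruning (and two more in $T^\#$), giving $(\ell,1+2+2)=(\ell,5)$-dense. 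In your scheme, trimming happens \emph{before} any separation is guaranteed, so a single pre-test panel whose $\g$-cluster is reused by many nearby pre-test panels could have most of its branches pruned away at stage one, and the later sparsification cannot retroactively restore density.

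A second, independent problem is your accounting of density loss. You attribute one unit of damage each to re-rooting, witness avoidance, overlap removal, and conflict pruning, and argue that $(\ell,1)\to(\ell,5)$ absorbs these four. But re-rooting (descending from $\tilde{y}^0_i$ to the nearest $\g$-vertex $y^0_i$) costs nothing: the $\g$-cluster rooted at $y^0_i$ is still $(\ell,1)$-dense. The paper does not prune the witness branch or the overlap out of the test subtrees at all, so those two sources do not exist. The entire gap from $\wp=1$ to $\wp=5$ is due solely to conflict pruning, two units in $T^0$ and two in $T^\#$ (and the paper even remarks it is actually $(\ell,4)$-dense since there are no non-proximal pairs in $T^\#$). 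Finally, your ball-counting uses radius $2\gamma_t$; the paper's exclusion condition is stated with radius $6\gamma_t$, and that larger radius is load-bearing for the trimming step's correctness, not just for the constant in the bound. The per-vertex multiplicity argument you sketch (each vertex arises from an overlap-shallow edge within graph distance $C_w$, each such edge produces at most $g\invquantum$ panels) is the right idea and does match the paper.
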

\begin{proof}
	We proceed in two phases. First we choose
	a subset of test panels such that the test vertices
	in different panels
	are far away from each other. Then we cleave subtrees
	of the $\g$-clusters rooted at the test vertices
	to ensure that proximal/semi-proximal
	connecting paths and non-proximal hats do not
	intersect with test subtrees.
	
	Start with an arbitrary
	test panel $((y^0_i,y^0_j); (y^\#_i,y^\#_j))$
	in $\hcal'$.
	Remove
	from $\hcal'$ all test panels $(i',j')$ such that
	\begin{equation}\label{eq:definition-hcal1-case2}
	\min\{\gdist_{T^\#}(v,w)\ :\  v \in \{y^\#_i,y^\#_j\},
	w \in \{y^\#_{i'}, y^\#_{j'}\}\}
	\leq 6 \gamma_t,
	\end{equation}
	or
	\begin{equation}\label{eq:definition-hcal2-case2}
	\min\{\gdist_{T^0}(v,w)\ :\  v \in \{y^0_i,y^0_j\},
	w \in \{y^0_{i'}, y^0_{j'}\}\}
	\leq 6 \gamma_t.
	\end{equation}
	There are at most $2\cdot 2^{6\gamma_t+1}$ vertices
	within graph distance $2\gamma_t$ of a test pair.
	\srevision{
	Note, however, that some vertices may be used as a test vertex
	multiple times. Nevertheless we claim that each vertex
	can be used at most a constant number of times.
	Indeed, consider a vertex $y^0_{i'}$ in $T^0$
	with corresponding vertex $y^\#_{i'}$ in $T^\#$.
	Recall that each test panel is obtained from an
	overlap-shallow edge
	within graph distance
	$C_w$ in $T^0$ and that each such overlap-shallow edge produces
	at most $g\invquantum$ test panels. Hence
	$y^0_{i'}$ can arise in this way at most
	$2^{C_w+1}g\invquantum$ times.
	Therefore
	we remove at most $2^{6\gamma_t + C_w + 3}g\invquantum$ test panels.}
	
	Pick a remaining test pair in $\hcal'$.
	Proceed as above and then repeat until
	all pairs in $\hcal'$ have been picked or removed.
	At the end of the procedure, there are at least
	$$
	\frac{1}{1 + 2^{6\gamma_t + C_w + 3}g\invquantum} |\hcal'|
	$$
	test panels remaining, the set of which
	we denote by $\hcal$.
	Recalling that $\gamma_t \geq \Gamma$, in $\hcal$,
	the connecting paths of proximal/semi-proximal
	pairs and the hats of non-proximal pairs
	cannot intersect with each other
	by~\eqref{eq:definition-hcal1-case2}
	and~\eqref{eq:definition-hcal2-case2}
	as it would imply the existence of test vertices
	in different panels at graph distance less than
	$2\gamma_t \leq 6\gamma_t$.
	
	For each $(i,j) \in \hcal$, it remains to define
	the corresponding test subtrees $((Y^0_i,Y^0_j);
	(Y^\#_i,Y^\#_j))$.
	Let $((\bar{Y}^0_i,\bar{Y}^0_j);
	(\bar{Y}^\#_i,\bar{Y}^\#_j))$
	be as above and note that, since we may
	have re-used the same $\g$-clusters multiple
	times, these subtrees
	may not satisfy the global requirements of a battery
	as they may intersect with
	each other or with connecting paths
	and hats.
	We modify $\bar{Y}^0_i$ as follows,
	and proceed similarly for $\bar{Y}^0_j$.
	For each $(i',j') \in \hcal$ not equal
	to $(i,j)$
	and each subtree
	$Z \in \{\bar{Y}^0_{i'}$, $\bar{Y}^0_{j'}\}$,
	if $Z$ has its root {\em below} the root of $\bar{Y}^0_i$,
	remove from $\bar{Y}^0_i$ all those nodes in
	$Z$ as well as all descendants of the vertices
	on the upward path of length $2\gamma_t$ starting at the root
	of $Z$. Note that the latter path cannot reach $y^0_i$
	because both $y^0_{i'}$ and $y^0_{j'}$ are at
	graph distance at least $6\gamma_t$
	from $y^0_i$
	from the construction of $\hcal$.
	We let $Y^0_i$ be
	the remaining subtree in $T^0$
	and $Y^\#_i$, its matching subtree
	in $T^\#$.	
	We claim that the resulting restricted subtrees
	$(Y^0_i,Y^0_j)$
	are $(\ell,3)$-dense. Note first that
	the subtrees in
	$(\bar{Y}^0_i,\bar{Y}^0_j)_{(i,j)\in\hcal}$
	are $(\ell,1)$-dense as they were obtained
	from the procedure in Section~\ref{sec:finding-matching}. Moreover, because 1) the roots
	of the removed subtrees are
	at graph distance at most
	$2\gamma_t$ from a $\ell$-vertex in $(y^0_i,y^0_j)_{(i,j)\in \hcal}$,
	2) test vertices in different pairs are at graph distance
	at least $6\gamma_t$ from each other, and
	3) $\gamma_t$ is a multiple of $\ell$,
	if we remove a subtree rooted at a
	$\g$-child of a $\g$-vertex in
	$(\bar{Y}^0_i,\bar{Y}^0_j)$
	we cannot remove more than one other subtree rooted
	at another
	$\g$-child of the same $\g$-vertex as that would imply
	the existence of two test vertices {\em in different pairs} at graph distance
	less than
	$$
	2(2\gamma_t) + 2g\ell < 6\gamma_t,
	$$
	in $T^0$, a contradiction.
	
	We then proceed similarly in $T^\#$.
	The resulting restricted subtrees
	$$
	\{(Y^0_i,Y^0_j);
	(Y^\#_i,Y^\#_j)\}_{(i,j)\in\hcal},
	$$
	are then $(\ell,5)$-dense (in fact, $(\ell,4)$-dense
	as there are no non-proximal pairs in $T^\#$).
\end{proof}

\paragraph{Summary of the large overlap case.}
We have proved the following in the large overlap case.
Recall that $\wp = 5$, $\ell = \ell(g,\wp)$ is chosen as
in Proposition~\ref{prop:distinguishing},
$$
\Gamma = 6 g \invquantum \log_2 \left(\frac{8}{1 - 1/\sqrt{2}}\right) + 2\ell g \invquantum + 4,
$$
and
$\gamma_t \geq \Gamma$, a multiple of $\ell$, and $C$
are chosen as
in Proposition~\ref{prop:distinguishing}.

\begin{proposition}[Battery in the large overlap case]
	\label{prop:battery-large-overlap}
	In the large overlap case, we can build a
	$(\ell,\wp,\Gamma,\gamma_t,I)$-battery
	$$
	\{((y^0_i,z^0_i);(Y^0_i,Z^0_i))\}_{i=1}^I
	\text{ (in $T^0$) and }
	\{((y^\#_i,z^\#_i);(Y^\#_i,Z^\#_i))\}_{i=1}^I
	\text{ (in $T^\#$)}
	$$
	with
	$$
	I \geq  \frac{\Delta}{
		20 g\invquantum C_\ocal (1 + 2^{6\gamma_t + C_w + 3}g\invquantum)
	}.
	$$
\end{proposition}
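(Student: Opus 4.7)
The plan is to assemble the three ingredients already developed in Section~\ref{section:largeOverlap}---the counting of useful overlap-shallow edges, the witness-based construction of a pre-test panel from each such edge, and the sparsification step---into the claimed lower bound on $I$.

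First, I would combine the large overlap hypothesis~\eqref{eq:largeOverlap} with Claim~\ref{claim:overlapRelationship} (which in one direction gives $|\ocal^0| \geq \frac{1}{g\invquantum}|\ocal^\#|$) to obtain $|\ocal^0| \geq \frac{\Delta}{10 g\invquantum C_\ocal}$. Substituting the choice $\beta = \frac{12g\invquantum}{12g\invquantum - 1}$ into the useful-edge count~\eqref{eq:useful-edges} makes the bracketed factor equal to exactly $1/2$, so the collection $\hcal'$ of pre-test panels produced from useful edges has size at least $\frac{\Delta}{20 g\invquantum C_\ocal}$. Note $\beta \leq 2$ since $g\invquantum \geq 1$, consistent with the parameters appearing in Claim~\ref{claim:distance-to-witnesses}.

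Second, I would check that each useful edge does yield a valid pre-test panel. The cluster and pair requirements follow from the witness construction (items 3--5 preceding Claim~\ref{claim:distance-to-witnesses}): matching subtrees come from the construction of $\g$-clusters; co-hanging in $T^\#$ is enforced by choosing the witnesses on opposite sides of the separating edge $e^\#$; the strict distance gap on $T^0$ versus $T^\#$ is extracted from the four-point condition applied to the quartet $\{\tilde{y}^0_i, \tilde{z}^0_i, \tilde{y}^0_j, \tilde{z}^0_j\}$, whose topology on $T^0$ is $\tilde{y}^0_i\tilde{z}^0_i\,|\,\tilde{y}^0_j\tilde{z}^0_j$ but on $T^\#$ is $\tilde{y}^\#_i\tilde{y}^\#_j\,|\,\tilde{z}^\#_i\tilde{z}^\#_j$; proximity follows from the bound in Claim~\ref{claim:distance-to-witnesses} together with the conversion factor $g\invquantum$ between $T^0$ and $T^\#$ graph distances, the $2\ell$ slack from descending to the nearest $\g$-vertex, and the constant cost of the re-rooting operation, giving $\gdist_{T^\#}(y^\#_i,y^\#_j) \leq \Gamma$.

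Third, I would apply Claim~\ref{claim:sparsification-case2} to sparsify $\hcal'$ to a subset $\hcal$ of size at least $|\hcal'|/(1 + 2^{6\gamma_t + C_w + 3}g\invquantum)$, replacing the original $\g$-cluster test subtrees with $(\ell,5)$-dense modifications that satisfy the global non-intersection requirement in both $T^0$ and $T^\#$. Multiplying the two fractions yields $I \geq \frac{\Delta}{20 g\invquantum C_\ocal (1 + 2^{6\gamma_t + C_w + 3}g\invquantum)}$, as claimed. The main obstacle, already absorbed in the body of the section, is the initial witness construction: one needs four witnesses lying outside the overlap and within constant graph distance, which is exactly what the overlap-shallow condition and the choice of $\beta$ deliver. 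The density parameter $\wp = 5$ (rather than $\wp = 1$ as in the homogeneous case) is consumed here because the sparsification must trim test subtrees in both trees and can delete up to an additional few $\g$-children per level around each intrusive root.
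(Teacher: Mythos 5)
Your proposal is correct and follows essentially the same route as the paper: the paper's stated proof is a one-liner citing Claim~\ref{claim:sparsification-case2}, and the substance is exactly the chain you reconstruct---combining~\eqref{eq:largeOverlap} with the one-sided bound $|\ocal^0| \geq \frac{1}{g\invquantum}|\ocal^\#|$ from Claim~\ref{claim:overlapRelationship}, substituting $\beta = \frac{12g\invquantum}{12g\invquantum - 1}$ into~\eqref{eq:useful-edges} to make the bracket $1/2$ and obtain $|\hcal'| \geq \frac{\Delta}{20 g\invquantum C_\ocal}$, noting each useful edge yields a valid pre-test panel via the quartet/witness construction and re-rooting (with proximity certified by Claim~\ref{claim:distance-to-witnesses} and~\eqref{eq:gamma-requirement}), and then sparsifying with the factor from Claim~\ref{claim:sparsification-case2}.
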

\begin{proof}
	The result follows from
	Claim~\ref{claim:sparsification-case2}.
\end{proof}

\clearpage

\section*{Acknowledgments}

	We thank the anonymous reviewers of a previous
version for helpful comments.

\bibliographystyle{alpha}
\bibliography{my,thesis}

\newpage

\appendix

\section{Preliminary lemmas}

In this section, we collect a few 
useful lemmas.

\subsection{Ancestral reconstruction}
\label{section:appendix-1}

An important part of our construction
involves reconstructing ancestral states.
We will use the following lemma
from~\cite{EvKePeSc:00} which we
typically apply to a rooted subtree.
Let $T = (V,E;\phi;\weight)
\in \phy$ rooted at $\rt$.
Let $e = (x,y) \in E$
and assume that $x$ is closest to $\rt$ (in topological distance).
We define
$\path(\rt,e) = \path(\rt,y)$,
$|e|_\rt = |\path(\rt,e)|$, and
\begin{equation}
\label{eq:def-resistance}
R_\rt(e) = \left(1 - \theta_e^2\right)
\Theta_{\rt,y}^{-2},
\end{equation}
where $\Theta_{\rt,y} = e^{-\dist_T(\rt,y)}$ and
$\theta_e = e^{-\weight_e}$.
\begin{lemma}[Ancestral reconstruction~\cite{EvKePeSc:00}]
\label{lem:ekps}
For any unit flow $\Psi$ from $\rt$ to $[n]$,
\begin{equation}
\label{eq:flow}
\E_T\left|
\P_T[\s_\rt = +1|\s_X]
- \P_T[\s_\rt = -1|\s_X]
\right|
\geq
\frac{1}{1+
\sum_{e \in E} R_\rt(e) \Psi(e)^2},
\end{equation}
where the LHS is the difference between the
probability of correct and incorrect
reconstruction using MLE.
(See~\cite[Equation (14), Lemma 5.1 and Theorem 1.2']{EvKePeSc:00}.)
\end{lemma}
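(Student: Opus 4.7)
The plan is to lower bound the LHS by the second moment of the Bayes-optimal estimator and then compare that second moment to one for a linear estimator tailored to the flow $\Psi$. Writing $M = \E_T[\s_\rt \mid \s_X]$ and using $\s_\rt \in \{-1, +1\}$, one has $M = \P_T[\s_\rt = +1 \mid \s_X] - \P_T[\s_\rt = -1 \mid \s_X]$, so the LHS equals $\E_T|M|$. Since $|M| \leq 1$, I would use $\E_T|M| \geq \E_T[M^2]$ and reduce the task to proving
\begin{equation*}
\E_T[M^2] \geq \frac{1}{1 + \sum_{e \in E} R_\rt(e) \Psi(e)^2}.
\end{equation*}

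The main bound will come from the MMSE optimality of $M$: for any linear estimator $\hat M = \sum_{v \in [n]} c_v \s_v$ and any $\lambda \in \real$, $\E_T[(M - \s_\rt)^2] \leq \E_T[(\lambda \hat M - \s_\rt)^2]$. Optimizing over $\lambda$ yields $\E_T[M^2] \geq (\E_T[\s_\rt \hat M])^2 / \E_T[\hat M^2]$. I would then choose $c_v = \Psi_v / \Theta_{\rt, v}$, where $\Psi_v$ is the total flow delivered to leaf $v$ (so $\sum_v \Psi_v = 1$). Because $\E_T[\s_\rt \s_v] = \Theta_{\rt, v}$, this normalization gives $\E_T[\s_\rt \hat M] = \sum_v \Psi_v = 1$, and the task reduces to proving $\E_T[\hat M^2] = 1 + \sum_{e \in E} R_\rt(e) \Psi(e)^2$.

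This identity is the main obstacle and the essential content of the lemma. Expanding $\E_T[\hat M^2] = \sum_{u, v \in [n]} \Psi_u \Psi_v \Theta_{\rt, w(u, v)}^{-2}$ using $\E_T[\s_u \s_v] = \Theta_{\rt, u} \Theta_{\rt, v} \Theta_{\rt, w(u,v)}^{-2}$, where $w(u, v)$ denotes the most recent common ancestor of $u$ and $v$, I would telescope along the root-to-$w(u,v)$ path via
\begin{equation*}
\Theta_{\rt, w}^{-2} = 1 + \sum_{e \in \path(\rt, w)} (1 - \theta_e^2)\, \Theta_{\rt, y(e)}^{-2} = 1 + \sum_{e \in \path(\rt, w)} R_\rt(e),
\end{equation*}
where $y(e)$ is the non-root endpoint of $e$. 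Swapping the order of summation, the coefficient of $R_\rt(e)$ becomes $\sum_{u, v \text{ below } e} \Psi_u \Psi_v = \Psi(e)^2$ by the very definition of the flow through $e$, which yields the desired identity.

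The delicate point in executing this plan is the combinatorial-electrical bookkeeping: verifying that the edge $e$ appears in $\path(\rt, w(u, v))$ exactly when both leaves $u$ and $v$ lie in the subtree below $e$. That is precisely what converts the double sum over leaf pairs into a squared flow $\Psi(e)^2$ and gives rise to the electrical-network interpretation of~\cite{EvKePeSc:00}. Chaining the MMSE comparison with this identity yields $\E_T[M^2] \geq 1/(1 + \sum_e R_\rt(e) \Psi(e)^2)$, and combining with $\E_T|M| \geq \E_T[M^2]$ proves the lemma.
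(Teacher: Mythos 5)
Your reconstruction is correct and follows the same second-moment/linear-estimator/flow argument as the cited source~\cite{EvKePeSc:00}; the paper itself does not reprove this lemma but simply quotes it. Each step checks out: $\E_T|M| \geq \E_T[M^2]$ since $|M|\leq 1$; the MMSE optimality of $M$ together with the optimal scalar $\lambda$ gives $\E_T[M^2] \geq (\E_T[\s_\rt\hat M])^2 / \E_T[\hat M^2]$; the choice $c_v = \Psi_v/\Theta_{\rt,v}$ normalizes the numerator to $1$ by $\E_T[\s_\rt\s_v]=\Theta_{\rt,v}$; the telescoping identity $\Theta_{\rt,w}^{-2}=1+\sum_{e\in\path(\rt,w)}(1-\theta_e^2)\Theta_{\rt,y(e)}^{-2}$ is exact; and the double sum over leaf pairs correctly collapses because $e\in\path(\rt,w(u,v))$ precisely when both $u,v$ lie below $e$, whence the coefficient of $R_\rt(e)$ is $(\sum_{u\,\text{below}\,e}\Psi_u)^2=\Psi(e)^2$ by flow conservation. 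Nothing is missing.
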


\subsection{Random cluster representation}
\label{section:appendix-2}

We use a convenient percolation-based representation of the
CFN model known as the random cluster model (see e.g.~\cite{Grimmett:06}). Let $T = (V,E;\phi;\weight)
\in \phy$
with corresponding $(\delta_e)_{e\in E}$.
\begin{lemma}[Random cluster representation]
\label{lem:fk}
Run a percolation process on $T$
where edge $e$ is open with probability
$1 - 2 \delta_e$. Then associate
to each open connected component a state according
to the uniform distribution on $\{+1,-1\}$.
The state vector on the vertices
so obtained $(\s_v)_{v\in V}$ has the same distribution
as the corresponding CFN model.
\end{lemma}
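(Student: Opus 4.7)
\textbf{Proof plan for Lemma~\ref{lem:fk}.} The plan is to compute the joint distribution of $(\s_v)_{v \in V}$ produced by the random cluster construction explicitly, marginalizing over the percolation configuration, and to verify that it matches the CFN joint distribution given at the end of Definition~\ref{def:mmt}, namely
\begin{equation*}
\mu^T_V(s_V) = \frac{1}{2}\prod_{e=(u,v)\in E_\downarrow}
\bigl[(1-\delta_e)\,\mathbf{1}\{s_u=s_v\}+\delta_e\,\mathbf{1}\{s_u\neq s_v\}\bigr].
\end{equation*}
Since $T$ is a tree with $|V|$ vertices and $|V|-1$ edges, this will only require a one-line counting argument for the number of clusters.

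First I would introduce the percolation configuration $\eta=(\eta_e)_{e\in E}\in\{0,1\}^{E}$ with $\eta_e=1$ meaning ``open'', so that $\prob[\eta]=\prod_e(1-2\delta_e)^{\eta_e}(2\delta_e)^{1-\eta_e}$. Because $T$ is a tree, removing the closed edges leaves a forest whose number of connected components equals $c(\eta)=|V|-\sum_e\eta_e$. Conditional on $\eta$, each component is assigned an independent uniform state in $\{+1,-1\}$, so
\begin{equation*}
\prob[s_V\mid \eta]=\Bigl(\tfrac12\Bigr)^{c(\eta)}\prod_{e=(u,v)}\mathbf{1}\{\eta_e=0\ \text{or}\ s_u=s_v\}.
\end{equation*}

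Second, I would compute $\prob[s_V]$ by summing over $\eta$. Pulling out the factor $(1/2)^{|V|}$ from $(1/2)^{c(\eta)}=(1/2)^{|V|}\prod_e 2^{\eta_e}$, the sum factors over edges and on each edge $e=(u,v)$ reduces to
\begin{equation*}
\sum_{\eta_e\in\{0,1\}}\bigl(2(1-2\delta_e)\bigr)^{\eta_e}(2\delta_e)^{1-\eta_e}\,\mathbf{1}\{\eta_e=0\ \text{or}\ s_u=s_v\}
=\begin{cases}2(1-\delta_e),&s_u=s_v,\\ 2\delta_e,&s_u\neq s_v.\end{cases}
\end{equation*}
Multiplying across the $|E|=|V|-1$ edges and combining with the prefactor $(1/2)^{|V|}$ produces exactly $\frac12\prod_e[(1-\delta_e)\mathbf{1}\{s_u=s_v\}+\delta_e\mathbf{1}\{s_u\neq s_v\}]$, matching $\mu_V^T(s_V)$.

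There is no real obstacle here; the only thing to be careful about is the bookkeeping of the power of $1/2$, which is where the tree hypothesis enters through $c(\eta)=|V|-\sum_e\eta_e$. One could alternatively give an inductive proof by peeling one leaf edge at a time, using the Markov property of the CFN model together with the independence of the percolation variables to reduce to the verification on a single edge (where $\prob[s_u=s_v]=(1-2\delta_e)+2\delta_e\cdot\tfrac12=1-\delta_e$, matching the CFN flip probability $\delta_e$), but the direct computation above seems cleaner.
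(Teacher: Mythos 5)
Your verification is correct: the per-edge factorization, the count $c(\eta)=|V|-\sum_e\eta_e$ for forests, and the final power-of-$1/2$ bookkeeping ($2^{|E|-|V|}=1/2$) all check out, and the result matches $\mu^T_V$. Note, however, that the paper does not actually supply a proof of Lemma~\ref{lem:fk}; it states the random cluster representation as a known fact and cites Grimmett's book, so there is no ``paper's approach'' to compare against. Your direct marginalization over percolation configurations is the standard argument, and your alternative suggestion (induction by peeling a leaf edge, using the Markov property and the single-edge identity $\prob[s_u=s_v]=(1-2\delta_e)+2\delta_e\cdot\tfrac12=1-\delta_e$) is equally valid.
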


\subsection{Concentration inequalities}
\label{section:appendix-3}

Recall the following standard concentration inequality (see e.g.~\cite{MotwaniRaghavan:95}):
\begin{lemma}[Azuma-Hoeffding Inequality]\label{lemma:azuma}
Suppose ${\bf Z}=(Z_1,\ldots,Z_m)$
are independent random variables taking values in a set
$S$, and $h:S^m \to \real$ is any $t$-Lipschitz function:
$|h({\bf z}) - h({\bf z'})|\leq t$ whenever ${\bf z}, {\bf z'} \in S^m$
differ at just one coordinate. Then,
$\forall \zeta > 0$,
\begin{equation*}
\prob\left[|h({\bf Z}) - \expec[h({\bf Z})]| \geq \zeta \right]
\leq 2\exp\left(-\frac{\zeta^2 }{2 t^2 m}\right).
\end{equation*}
\end{lemma}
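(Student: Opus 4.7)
The plan is to reduce the statement to the classical Azuma martingale tail bound via the standard Doob martingale construction, exploiting independence of the $Z_i$'s to convert the coordinate-wise Lipschitz property into a bound on martingale increments.

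First, I would build the Doob martingale. Set $\mathcal{F}_0 = \{\emptyset, \Omega\}$ and $\mathcal{F}_i = \sigma(Z_1,\ldots,Z_i)$ for $i=1,\ldots,m$, and define
\begin{equation*}
M_i = \expec[h(\mathbf{Z}) \mid \mathcal{F}_i], \qquad D_i = M_i - M_{i-1}.
\end{equation*}
Then $(M_i)$ is a martingale with $M_0 = \expec[h(\mathbf{Z})]$ and $M_m = h(\mathbf{Z})$, so telescoping gives $h(\mathbf{Z}) - \expec[h(\mathbf{Z})] = \sum_{i=1}^m D_i$.

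Next I would establish that $|D_i| \leq t$ almost surely. Because the $Z_j$'s are independent, I can represent the conditional expectations as integrals against product measure: writing $\mu_j$ for the law of $Z_j$,
\begin{equation*}
M_i = \int h(Z_1,\ldots,Z_i,z_{i+1},\ldots,z_m)\, \prod_{j>i} d\mu_j(z_j),
\end{equation*}
and similarly for $M_{i-1}$ with $Z_i$ also integrated out against an independent copy $Z_i'$. Subtracting,
\begin{equation*}
D_i = \int \bigl[h(Z_1,\ldots,Z_{i-1},Z_i,z_{i+1},\ldots,z_m) - h(Z_1,\ldots,Z_{i-1},Z_i',z_{i+1},\ldots,z_m)\bigr]\, d\mu_i(Z_i')\prod_{j>i} d\mu_j(z_j).
\end{equation*}
The integrand involves two arguments differing only in the $i$-th coordinate, so the $t$-Lipschitz hypothesis bounds it by $t$ pointwise; hence $|D_i|\leq t$.

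The final step is Azuma's inequality for a martingale with uniformly bounded increments. I would prove it by the usual exponential-moments route: since $\expec[D_i \mid \mathcal{F}_{i-1}]=0$ and $D_i \in [-t,t]$, Hoeffding's lemma gives $\expec[e^{\lambda D_i} \mid \mathcal{F}_{i-1}] \leq e^{\lambda^2 t^2/2}$ for every $\lambda \in \mathbb{R}$. Iterating the tower property,
\begin{equation*}
\expec\bigl[e^{\lambda (M_m - M_0)}\bigr] \leq e^{\lambda^2 t^2 m/2},
\end{equation*}
and a Chernoff bound with the optimal choice $\lambda = \zeta/(t^2 m)$ yields
\begin{equation*}
\prob[M_m - M_0 \geq \zeta] \leq \exp\!\bigl(-\zeta^2/(2t^2 m)\bigr).
\end{equation*}
Applying the same argument to $-h$ and combining via a union bound produces the two-sided inequality claimed.

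There is no genuine obstacle here, as this is the textbook McDiarmid/bounded-differences inequality (in its cruder form that uses $|D_i|\leq t$ rather than a range-of-$t$ argument); the only place one could slip is in justifying the uniform bound $|D_i|\leq t$, where the independence of the coordinates is what allows the Lipschitz constant in one coordinate to pass through the conditional expectation—without independence one would only get a bound involving the full Lipschitz modulus of $h$.
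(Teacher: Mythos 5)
Your proof is correct: the Doob-martingale decomposition, the product-measure representation that lets the one-coordinate Lipschitz bound pass through the conditional expectation (this is exactly where independence is used), Hoeffding's lemma for the conditional exponential moment, and the optimized Chernoff bound together give precisely the stated two-sided inequality with constant $2t^2 m$ in the denominator. The paper itself offers no proof of this lemma --- it is quoted as a standard concentration inequality with a citation --- and your argument is the standard textbook route (Azuma/McDiarmid in its cruder $|D_i|\leq t$ form), so there is nothing to reconcile.
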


\end{document}